\renewcommand{\leq}{\leqslant}
\renewcommand{\geq}{\geqslant}
\renewcommand{\le}{\leqslant}
\renewcommand{\ge}{\geqslant}
\newcommand{\hhh}{{\mathcal{H}}}
\newcommand{\hn}{\mathbb{H}^n}
\newcommand{\normale}{N}
\newcommand{\hnn}{\ell}
\newcommand{\vh}{\nu}
\newcommand{\fisd}{g_{S,k}}
\newcommand{\fisdapp}{g_{S,k,\omega}}
\newcommand{\rr}{{\mathbb{R}}}
\newcommand{\hh}{{\mathbb{H}}}
\newcommand{\Om}{\Omega}
\newcommand{\eps}{\varepsilon}
\newcommand{\ns}{\nabla^S}
\newcommand{\Es}{\mathrm{G}}
\newcommand{\mh}{\mathcal{H}}
\newcommand{\s}{\mathcal S}
\newcommand{\scu}{\longrightarrow}
\newcommand{\x}{\mathfrak{X}}
\renewcommand{\th}{\tilde h}
\newcommand{\E}{\mathrm{F}}
\newcommand{\En}{\mathrm{E}}
\newcommand{\adel}{A(\delta)}
\definecolor{champagne}{rgb}{0.97, 0.91, 0.81}
\definecolor{asparagus}{rgb}{0.53, 0.66, 0.42}
\DeclareMathOperator{\ric}{Ric}
\DeclareMathOperator{\divv}{div}
\DeclareMathOperator{\trace}{trace}
\DeclareMathOperator{\hess}{Hess}
\DeclareMathOperator{\supp}{supp}
\DeclareMathOperator{\spann}{span}
\DeclareMathOperator{\tor}{Tor}
\DeclareMathOperator{\torn}{Tor_{\nabla}}
\DeclareMathOperator{\tors}{Tor_{\nabla^S}}
\newtheorem{theorem}{Theorem}[section]
\newtheorem{proposition}[theorem]{Proposition}
\newtheorem{corollary}[theorem]{Corollary}
\theoremstyle{definition}
\newtheorem*{subriembp}{Sub-Riemannian stable Bernstein problem}
\newtheorem{remark}[theorem]{Remark}
\newtheorem{example}[theorem]{Example}
\theoremstyle{remark}
\numberwithin{equation}{section}
\author[G. Giovannardi]{Gianmarco Giovannardi}
\address[Gianmarco Giovannardi]{Dipartimento di Matematica e Informatica "U. Dini"\\ Università degli Studi di Firenze\\ Viale Morgani 67/A, 50134 Firenze (FI), Italy\\
\textbf{\emph{gianmarco.giovannardi@unifi.it}}}
\author[A.~Pinamonti]{Andrea Pinamonti}
\address[Andrea Pinamonti]{
Dipartimento di Matematica\\ Università di Trento\\
Via Sommarive, 14, 38123 Povo (TN), Italy\\
\textbf{\emph{andrea.pinamonti@unitn.it}} }
\author[S.~Verzellesi]{Simone Verzellesi}
\address[Simone Verzellesi]{Dipartimento di Matematica "Tullio Levi-Civita", Università degli Studi di Padova, via Trieste 63, 35131 Padova (PD), Italy\\
\textbf{\emph{simone.verzellesi@unipd.it}}}
\title[Curvature estimates for minimal hypersurfaces in the Heisenberg group]{Curvature estimates for minimal hypersurfaces in the Heisenberg group}
\date{\today}
\subjclass{53C17, 53A10, 35R03}
\keywords{Minimal surfaces; Curvature estimates; Bernstein problem; Heisenberg group}
\thanks{\textit{Memberships and funding information.} The authors are members of the Istituto Nazionale di Alta Matematica (INdAM), Gruppo Nazionale per l'Analisi Matematica, la Probabilità e le loro Applicazioni (GNAMPA).
 G. Giovannardi is supported by 
 MIUR-PRIN 2022 Project \emph{Geometric-Analytic Methods
for PDEs and Applications}. 
A. Pinamonti and G. Giovannardi are supported by MIUR-PRIN 2017 Project \emph{Gradient flows, Optimal Transport and Metric Measure Structures}.
S. Verzellesi is supported by INdAM–GNAMPA 2023 Project \emph{Equazioni differenziali alle derivate parziali di tipo misto o dipendenti da campi di vettori}. G.Giovannardi is supported by INdAM-GNAMPA 2024 Project   \emph{Problemi ellitici e sub-ellitici: non linearità, singolarità e crescita critica}.  A. Pinamonti and S. Verzellesi are supported by MIUR-PRIN 2022 Project \emph{Regularity problems in sub-Riemannian structures}  Project code: 2022F4F2LH\\
\indent \textit{Statements and Declarations.} The authors disclose any financial interest and any conflict of interest.}
\begin{document}
\begin{abstract}
This paper examines minimal hypersurfaces in sub-Riemannian Heisenberg groups. We extend the celebrated Simons formula and Kato inequality to the sub-Riemannian setting, and we apply them to obtain integral curvature estimates for stable hypersurfaces. These results lead to structural conditions that imply a Bernstein-type rigidity theorem for smooth, non-characteristic hypersurfaces in the second Heisenberg group.
\end{abstract}
\maketitle

\section{Introduction}

In this paper, we investigate minimal hypersurfaces within sub-Riemannian Heisenberg groups. Our main contribution is the extension to the sub-Riemannian setting of the celebrated \emph{Simons formula} for the Laplacian of the second fundamental form of a minimal hypersurface, discovered by Simons in \cite{MR233295}. Morover, we provide an improved \emph{Kato inequality} for the gradient of the second fundamental form of a minimal hypersurface, extending its Riemannian version first established by Schoen, Simon and Yau in \cite{MR0423263}. Combining the previous results, we derive integral curvature bounds for stable hypersurfaces. Finally, we apply them to uncover structural conditions under which complete, stable hypersurfaces in the second Heisenberg group are flat from the sub-Riemannian viewpoint, providing a first Bernstein-type rigidity result in Heisenberg groups of higher dimension. The solution to the Euclidean Bernstein problem played a pivotal role in advancing the understanding of the geometric structure of minimal hypersurfaces. Therefore, it seems appropriate to briefly recall the historical origins of this subject.
\subsection*{The Euclidean Bernstein problem}

The \emph{Bernstein problem}, originally solved by Bernstein in $\rr^2$ (cf. \cite{MR1544873}),  consists in characterizing entire solutions $u:\rr^n \to \rr$ to the minimal surface equation
\begin{equation}
\label{eq:MCE0}
\divv \left( \dfrac{D u}{\sqrt{1+|D u|^2}}\right)=0.
\end{equation}
Thanks to the combined contributions of Fleming (cf. \cite{MR157263}), De Giorgi (cf. \cite{MR178385}), Almgren (cf. \cite{MR200816}), Simons (cf. \cite{MR233295}) and Bombieri, De Giorgi 
and Giusti (cf. \cite{MR250205}), we know that, for $n\le7$, entire solutions to \eqref{eq:MCE0} are affine functions, or, equivalently, their graphs are hyperplanes. In contrast, for $n\ge8$, there exist entire analytic solutions to \eqref{eq:MCE0} that are not affine. 
Indeed, the well-known monotonicity formula for the perimeter density allows to reduce the solution to the Bernstein problem in $\rr^{n+1}$ to the existence of singular minimal cones in $\rr^n$ (cf. \cite{MR157263,MR178385}), which occurs if and only if $n\geq 8$ (cf. \cite{MR200816,MR233295,MR250205}).
The very same approach is suitable for the solution to a way more general formulation of the Bernstein problem, i.e. the characterization of global perimeter minimizers in $\rr^n$. In this setting, when $n\leq 7$, the unique non-empty global perimeter minimizers in $\rr^n$ are half-spaces, while there are counterexamples when $n\geq 8$.
We refer to \cite{MR0775682} for a detailed account on the Bernstein problem in the Euclidean space.
\subsection*{An alternative approach} A new approach to the Bernstein problem
was proposed by Schoen, Simon and Yau in their seminal paper \cite{MR0423263}, where the authors solved the latter in the class of complete, stable hypersurfaces satisfying suitable volume growth assumptions and under the additional constraint $n\leq 6$. This second approach can be summarized in the following steps.
\begin{enumerate}
    \item  Combining the celebrated \emph{Simons identity} for minimal hypersurfaces $S\subseteq\rr^n$ (cf. \cite{MR233295}), namely
\begin{equation}\label{simonseuclideaintro}
    \Delta^Sh=-|h|^2h,
\end{equation}
with the \emph{Kato-type inequality}
\begin{equation}\label{katoeuclideaintro}
    \left(1+\frac{2}{n-1}\right)\left|\nabla^S|h|^2\right|^2\leq 4|h|^2|\nabla^S h|^2
\end{equation}
(cf. \cite{MR0423263}), one provides a lower bound for $\Delta^S|h|^2$ of the form
\begin{equation}\label{simonskatoeuclideaintro}
    2|h|^2\Delta^S|h|^2\geq  \left(1+\frac{2}{n-1}\right)\left|\nabla^S|h|^2\right|^2-4|h|^6.
\end{equation}
Here, $h$ is the second fundamental form associated to $S$, $\Delta^S$ is the tangential Laplacian and $\nabla^S$ is the tangential gradient.
\item Owing to \eqref{simonskatoeuclideaintro}, one establishes $L^p$-estimates for stable hypersurfaces such as
\begin{equation}\label{lpestimateseuclideaintro}
    \int_S|h|^p\varphi^p\,d\sigma\leq C\int_S|\nabla^S\varphi|^p \,d\sigma,
\end{equation}
where $C$ is a geometric constant, $p$ lies in a range of exponents which depends on the dimension $n$, $\sigma$ is the surface measure and $\varphi$ is a smooth test function. 
\item Assuming that $S$ satisfies volume growth conditions of the form
\begin{equation}\label{updeeuclideaintro}
    \sigma(S\cap B_r(p))=O\left(r^{n-1}\right)
\end{equation}
as $r\to\infty$, one exploits \eqref{lpestimateseuclideaintro} to show that $S$ is totally geodesic, meaning that $h\equiv 0$.
\item Complete, totally geodesic Euclidean hypersurfaces are hyperplanes.
\end{enumerate}
In particular, since for $n\leq 7$ boundaries of perimeter minimizers are smooth, complete, stable hypersurfaces and satisfy \eqref{updeeuclideaintro} (cf. \cite{MR2976521}), this new approach yields a new solution to the Bernstein problem when $n\leq 6$.
 Although this second approach fails to solve the Bernstein problem in its full generality, as the case $n=7$ is not covered, it is originally stated in the more general setting of Riemannian manifolds satisfying suitable curvature constraints. We point out that, in the Euclidean setting,  Bellettini (cf. \cite{bellettini2023extensionsschoensimonyauschoensimontheorems}) has recently extended Schoen-Simon-Yau's result up to $n=7$. This second approach has the advantage of being applicable to the solution of the so-called \emph{stable Bernstein problem}, i.e. the characterization of complete, stable hypersurfaces, thus without a priori requiring that they are boundaries of global perimeter minimizers. While the thee-dimensional version of the latter has been solved by do Carmo and Peng (cf. \cite{MR546314}), Fischer-Colbrie and Schoen (cf. \cite{MR562550}) and Pogorelov (cf. \cite{MR630142}) via \emph{ad hoc} techniques, Schoen-Simon-Yau's approach reduces the solution to the higher dimensional case to the establishment of volume growth estimates as in \eqref{updeeuclideaintro}. Following this approach, Chodosh and Li (cf. \cite{chodosh4,Chodosh2023due}) and Catino, Mastrolia and Roncoroni (cf. \cite{MR4706440}), Chodosh, Li, Minter and Stryker (cf. \cite{chodosh5}) and Mazet (cf. \cite{mazet2024stableminimalhypersurfacesmathbb}) recently solved the stable Bernstein problem in $\rr^4$, $\rr^5$ and $\rr^6$ respectively. We refer to \cite{MR4308670,Colombo20242275} for some recent developments in the Riemannian setting.
\subsection*{The sub-Riemannian Bernstein problem} Like its Euclidean and Riemannian precursors, the sub-Riemannian Bernstein problem is an intriguing topic within the broader framework of sub-Riemannian geometry and it fits into the more general context of studying minimal hypersurfaces in sub-Riemannian structures (cf. \cite{MR2165405,MR2983199,MR2262784,MR2472175,MR4346009,pmc1,pozuelo2024existence,MR4314055,SGR,MR2043961,MR3445204,MR2448649,MR3276118} and references therein). This research area is particularly relevant in the sub-Riemannian Heisenberg group $\hn$, 
which constitutes a prototypical model in the setting of Carnot groups (cf. \cite{MR2363343}), sub-Riemannian manifolds (cf. \cite{MR3971262}), CR manifolds (cf. \cite{MR2312336}) and Carnot-Carathéodory spaces (cf. \cite{MR1421823}). 
We briefly recall that the $n$-th Heisenberg group $(\hn,\cdot)$ is $\mathbb R^{2n+1}$ endowed with the group law
\begin{equation*}
\label{eq:Hproduct}
    p\cdot p'=(z,t)\cdot(z',t')=(\bar x,\bar y,t)\cdot (\bar x',\bar y',t')=\left(\bar x+\bar x',\bar y+\bar y', t+t'+\sum_{j=1}^n\left(x_j'y_j-x_jy_j'\right)\right),
\end{equation*}
where we denoted points $p\in\mathbb R^{2n+1}$ by $p=(z,t)=(\bar x,\bar y,t)=(x_1,\ldots,x_n,y_1,\ldots,y_n,t)$.
With this operation, $\hn$ is a Carnot group, whose associated \emph{horizontal distribution}, which we denote by $\hhh$, is generated by the left-invariant vector fields
\begin{equation*}
    Z_j=X_j=\frac{\partial}{\partial x_j}+y_j\frac{\partial}{\partial t}\qquad \text{and}\qquad Z_{n+j}=Y_j=\frac{\partial}{\partial x_j}-x_j\frac{\partial}{\partial t}
\end{equation*}
for $j=1,\ldots,n$. A vector field which is tangent to $\hhh$ at every point is called \emph{horizontal}. If we denote by $T$ the left-invariant vector field $\frac{\partial}{\partial t}$, then $
X_1,\ldots,X_n,Y_1,\ldots,Y_n,T
$ constitutes a global frame of left-invariant vector fields.  The only nontrivial commutation relations are
\begin{equation*}
    [X_j,Y_j]=-[Y_j,X_j]=-2T
\end{equation*}
for any $j=1,\ldots,n$. 
$\hn$ inherits a sub-Riemannian structure by fixing be the unique Riemannian metric $\langle\cdot,\cdot\rangle$ which makes $X_1,\ldots,X_n,Y_1,\ldots,Y_n,T$ orthonormal. Moreover, $\hn$ can be endowed with an appropriate affine connection $\nabla$, the so-called \emph{pseudohermitian connection}, which is metric but not torsion-free, and in a sense realizes it as a flat sub-Riemannian model. These tools both provide an intrinsic definition of perimeter, the so-called \emph{horizontal perimeter}, and enhance the study of the extrinsic geometry of submanifolds in $\hh^n$. Consequently, the sub-Riemannian formulation of the Bernstein problem appears just as natural as its Riemannian counterpart.  %
Nevertheless, its essence is substantially different from the Euclidean one.
First of all, a suitable generalization of the Euclidean monotonicity formula is known to hold only for particular classes of hypersurfaces (cf. \cite{MR2644313}), whereas its validity for general hypersurfaces remains a major open problem in the field. This obstruction prevents the possibility to follow either the classical approach (cf, \cite{MR233295}) or the more recent one developed in \cite{bellettini2023extensionsschoensimonyauschoensimontheorems}.   %
Moreover, there are several examples of minimal cones, both smooth (cf. \cite{MR2435652,MR2983199,SGR}) and with low regularity (cf. \cite{MR2448649,MR2455341}) which are not flat from the intrinsic viewpoint of $\hn$.
This new phenomenon is caused  by the fact that an hypersurface $S\subseteq \hh^n$, although smooth from a classical differential standpoint, may be intrinsically irregular due to the presence of the so-called \emph{characteristic points}, i.e. those points $p\in S$ for which the tangent space $T_pS$ coincides with the horizontal distribution $\hhh_p$.  For the above-mentioned reasons, the first approach we have described in the Euclidean setting does not appear to be suitable for this context, neither in $\hh^1$ nor in higher dimension. Nevertheless, by means of \emph{ad hoc} techniques, the Bernstein problem in $\hh^1$ is now largely solved.
Danielli, Garofalo, Nhieu and Pauls (cf. \cite{MR2648078}) and Hurtado, Ritoré and Rosales (cf. \cite{MR2609016}, and cf. also \cite{MR2165405,MR2983199,MR2435652,MR2472175}) proved that a complete, two-sided, connected, stable $C^2$-surface in $\hh^1$ must be a \emph{vertical plane} (without characteristic points), a \emph{horizontal plane} (with one characteristic point) or the \emph{hyperbolic paraboloid} $t=x_1y_1$ (with a line of characteristic points). In general, we recall that a \emph{vertical hyperplane} in $\hh^n$ is an Euclidean hyperplane which is tangent to $T$ at every point.
The hyperbolic paraboloid, which is a cone for the intrinsic geometry of $\hn$, can be easily lifted to provide smooth, non-flat, minimal cones for any $n\geq 2$ (cf. \cite{ruled}). A special class of surfaces without characteristic points, which we call \emph{non-characteristic}, is that of \emph{intrinsic graphs} as introduced by Franchi, Serapioni and Serra Cassano in \cite{MR1871966}.
In this setting, Barone Adesi, Serra Cassano and Vittone (cf. \cite{MR2333095}) had previously shown that entire, stable intrinsic graphs associated to a $C^2$-function are vertical planes. We point out that, differently from the Euclidean setting, stability plays a role even when dealing with intrinsic graphs. Indeed, Danielli, Garofalo and Nhieu exhibited striking examples of entire, minimal intrinsic graphs which are unstable (cf. \cite{MR2405158}). In the non-characteristic setting, the same conclusion was achieved by Galli and Ritoré (cf. \cite{MR3406514}) in the class of non-characteristic, complete, two-sided, connected and stable $C^1$-surfaces. The $C^1$-regularity assumption was later improved to Euclidean Lipschitz regularity by Nicolussi Golo and Serra Cassano (cf. \cite{MR3984100}) and by the first author and Ritoré (cf. \cite{GR24}).
We point out that the best possible regularity to guarantee the above rigidity in $\hh^1$ is still an open problem (cf. \cite{MR4433085} for some developments in this direction).
On the other hand, although in $\hh^1$ there are counterexamples to the regularity of perimeter minimizers, some evidences (cf. e.g. \cite{MR2774306}) suggest that it is reasonable to study the higher dimensional Bernstein problem in the smooth category.
Therefore, in light of the above results and considerations, the appropriate intrinsic formulation of the Bernstein conjecture in arbitrary dimension reads as follows.
\begin{subriembp}
   Is it true that smooth, complete, two-sided, connected, stable non-characteristic hypersurfaces $S\subseteq \hh^n$ are vertical hyperplanes? 
\end{subriembp}

While, as we have just noticed, the problem in $\hh^1$ is fairly well understood, very little can be said in the higher dimensional case.
In \cite{MR2333095}, the authors provide a negative answer to this question for $n\ge5$, 
essentially by lifting the Euclidean analytic counterexamples available in $\rr^{n+1}$ when $n\geq 8$. However, the purely Euclidean character of these counterexamples suggests that the dimensional bound $n\geq 5$ might not be optimal.
In any case, the validity of this long-standing conjecture in the remaining cases $\hh^2$, $\hh^3$ and $\hh^4$ remains a completely open problem.
Motivated by the above considerations, in this paper we develop the geometric tools underlying Schoen-Simon-Yau's approach. As a consequence, we discover some reasonable assumptions, which we now briefly describe, to solve the sub-Riemannian Bernstein's conjecture.

\subsection*{Structural assumptions}
According to the result concerned, we will rely on some of the following assumptions that we will call \eqref{taiwansplitting}, \eqref{graddialfasololungojeinuconk} and \eqref{hpintermediatraqejvraffinata}. 
On the non-characteristic part of a hypersurface $S\subseteq\hh^n$ we can define its \emph{horizontal unit normal} $\vh$ as the normalization of the projection of the Riemannian unit normal $N$ onto the horizontal distribution $\hhh$. Then the \emph{horizontal shape operator} $A$ is given by the covariant derivative of $\vh$ with respect to the pseudohermitian connection $\nabla$. Since, differently from the Riemannian setting, $A$ is not necessarily self-adjoint, its symmetrized counterpart $\tilde{A}$ can be considered, and the associated \emph{horizontal second fundamental forms} $h$ and $\tilde{h}$ can be defined (cf. \cite{MR2401420,MR2354992,MR3385193,MR2898770,MR4193432}). We say that $S$ satisfies \eqref{taiwansplitting} when $J(\vh)$, the ninety-degree rotation of $\nu$ (cf. \Cref{heissectionnnnnnnnn}), is an eigenvector for $\tilde{h}$ on the non-characteristic part of $S$. This mild assumption, which is automatically satisfied in $\hh^1$, emerges naturally in the sub-Riemannian setting, for instance in the study of \emph{umbilic hypersurfaces} as introduced in \cite{MR3794892} (cf. \Cref{eigenumbisectionnnnnn}) and of (not necessarily umbilic) minimal hypersurfaces (cf. \Cref{newnewexamplecatenoid} and \Cref{newnewhyperparauno}.).
In the non-characteristic part of $S$, the intersection between the horizontal distribution $\mh$ and the tangent bundle $TS$ generates a $(2n-1)$-dimensional sub-bundle $\hhh TS$, the \emph{horizontal tangent bundle}. In turn, the latter admits the orthogonal decomposition $\hhh TS=\spann J(\nu)\oplus \hhh'TS$, where the $(2n-2)$-dimensional sub-bundle $\hhh'TS$ is invariant under the complex structure induced by the rotation $J$. The remaining tangent direction of $S$, say $\s$, is non-horizontal and orthogonal to $\hhh TS $, whence it is given by a linear combination between $T$ and $\nu$. However, as we are in the non-characteristic part of $S$, $\s$ cannot coincide with $\vh$, so that there exists a smooth function $\alpha$, the \emph{fundamental function} of $S$, such that $\mathcal{S}=T-\alpha \vh$ belongs to $TS$. The latter appears frequently in the sub-Riemannian theory of hypersurfaces in the Heisenberg group (cf. e.g. \cite{MR2165405,MR3794892, MR3385193,MR2898770}) and can be equivalently defined by $\alpha=\tfrac{\langle N,T\rangle}{|\normale^\hh|}$, where $\normale^\hh$ is the projection of the Riemannian unit normal $N$ onto the horizontal distribution $\hhh$. For instance, it is the curvature of a length-minimizing geodesic realizing the distance between a hypersurface and a given point \cite{MR4193432}. Moreover, when $S$ is embedded in $\hh^n$, the fundamental function can be characterized by the identity $\alpha= T d^S$, where $d^S$ is the signed \emph{Carnot-Carathéodory distance} from $S$ (cf. \Cref{noncharhypersectionnnnnnnnnn}). We say that $S$ satisfies \eqref{graddialfasololungojeinuconk} if the fundamental function $\alpha$ in constant along the sub-bundle $\hhh' TS$ on the non-characteristic part of $S$. Again, since $\hhh'TS=\{0\}$ at non-characteristic points when $n=1$, \eqref{graddialfasololungojeinuconk} is satisfied by every surface in $\hh^1$. Property \eqref{graddialfasololungojeinuconk}, which is automatically verified by \emph{vertical hypersurfaces} (cf. \Cref{newnewverticalhypuno}), appears frequently in the context of minimal hypersurfaces, both non-characteristic (cf. \Cref{newnewexamplecatenoiddue}) and characteristic (cf. \Cref{newnewhyperparadue}). We stress that {\eqref{graddialfasololungojeinuconk} does not prescribe any kind of behavior of $\alpha$ along the non-horizontal direction $\s$.
In order to describe our last assumption, we recall that a smooth  hypersurface $S$ is \emph{minimal} whether its \emph{horizontal mean curvature} $H$ vanishes on the non-characteristic part of $S$, and that it is \emph{stable} if it is minimal and
\begin{equation}\label{stabilitaintro}
 \int_S q\,\xi^2  \,d\sigma_\hhh\leq\int_S|\nabla^{\hhh,S}\xi|^2 \,d\sigma_\hhh
    \end{equation}
for any smooth function $\xi$ compactly supported on the non-characteristic part of $S$. Here $\nabla^{\hhh,S}$ is the \emph{horizontal tangent gradient}, $\sigma_\hhh$ is the sub-Riemannian surface measure and $q$, the \emph{stability function}, is defined by $q=|\tilde h|^2+4\langle \nabla\alpha,J(\vh)\rangle+2(n+1)\alpha^2$ (cf. \Cref{varformsectionnnnnnn}). If compared to the Riemannian stability inequality for minimal hypersurfaces immersed in a Riemannian manifold, the stability function $q$ plays the role of the Riemannian term $|h_R|^2+ \ric(N,N)$, where $h_R$ is the Riemannian second fundamental form, $\text{Ric}$ is the Ricci curvature of the ambient manifold and $N$ is the unit normal. In the Riemannian framework, it is customary to rely on suitable lower bounds for both the Ricci curvature and the sectional curvatures in order to achieve rigidity results (cf. e.g \cite{MR562550,MR0423263,MR4519145} and references therein). Accordingly, we say that $S$ satisfies \eqref{hpintermediatraqejvraffinata} if a lower bound for the stability function $q$ holds in the form
\begin{equation}\label{riccisubintro}
    q\geq |\tilde h|^2+(2n-2)\alpha^2
\end{equation}
on the non-characteristic part of $S$. When $n=1$,\eqref{riccisubintro} is verified by any complete, non-characteristic minimal surface in $\hh^1$ (cf. \cite{MR3406514,GR24}). 
Even in higher dimension, \eqref{hpintermediatraqejvraffinata}, which is again verified by any vertical hypersurface (cf. \Cref{newnewverticalhypuno}) seems to be a natural lower bound for complete, non-characteristic minimal hypersurfaces (cf. \Cref{newnewexamplecatenoidtre} and \Cref{newnewexampleelicoid}). However, differently from \eqref{taiwansplitting} and \eqref{graddialfasololungojeinuconk}, the presence of characteristic points prevents the validity of \eqref{hpintermediatraqejvraffinata}, both in $\hh^1$ and in higher dimension (cf. \Cref{newnewhyperparatre} and \Cref{newnewhorizontalhyperplane}). 
A more detailed description of these assumptions, including the theoretical reasons to introduce them and a list of motivating examples, is given in \Cref{secsimkatnew}.
\subsection*{Simons formulas} A compelling reason to be interested in the extrinsic geometry of hypersurfaces moves from recent results proved by the last two authors of this paper (cf. \cite{ruled}). Namely, complete, non-characteristic, embedded hypersurfaces $S\subseteq\hh^n$ with vanishing symmetric horizontal second fundamental form $\tilde h$ are vertical hyperplanes. 
One of the main contributions of this paper consists in the establishment of a full sub-Riemannian counterpart of the Simons identity \eqref{simonseuclideaintro} for $\Delta^{\hhh,S} h$, where $\Delta^{\hhh,S}$ is the \emph{horizontal tangential Laplacian} of $S$, which relates the latter to the stability function $q$ appearing in \eqref{stabilitaintro} with the aid of appropriate sub-Riemannian Gauss-Codazzi equations (cf. \Cref{gcsub}). 
\begin{theorem}[Simons formula]\label{newnewfullsimonsintro}
Let $S$ be a smooth, immersed, minimal hypersurface in $\hh^n$. Then
             \begin{equation}\label{newnewfullsimonsintroformula}
         \begin{split}
              \Delta^{\hhh,S}h(X,Y)&=-qh(X,Y)+8\alpha^2h(X,Y)\\
              &\quad+4\hess^{\hhh,S}\alpha(\pi(J(X)),Y)+4\hess^{\hhh,S}\alpha(X,\pi(J(Y)))\\
              &\quad+\Big(16\alpha\pi(J(X))\alpha-8\alpha^2h(X,J(\vh))+4\left(\nabla_X\vh\right)\alpha\Big)\langle Y,J(\vh)\rangle\\
&\quad-2X\alpha h(Y,J(\vh))-2Y\alpha h( X,J(\vh))\\
              &\quad+2\alpha h(Y,\nabla_{\pi(J(X))}\vh)-2\alpha  \langle\nabla_X\nabla_{J(\vh)}\vh,Y\rangle-4\alpha^2h(\pi(J(X)),\pi(J(Y)))\\
              &\quad+2\alpha\left\langle J\left(\nabla _X\vh\right),\nabla_Y\vh\right\rangle
         \end{split}
     \end{equation}
   on the non characteristic part of $S$, for any $X,Y \in\Gamma(\hhh TS)$.
\end{theorem}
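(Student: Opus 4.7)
The strategy is to adapt the classical Riemannian derivation of Simons' identity, carefully tracking the extra terms produced by the torsion of the pseudohermitian connection $\nabla$ and by the non-horizontal tangent direction $\s=T-\alpha\vh$. I would work pointwise at a non-characteristic point $p\in S$ and fix a local orthonormal frame $\{e_1,\dots,e_{2n-2},J(\vh)\}$ of $\hhh TS$ adapted at $p$, so that $\nabla^{\hhh,S}_{e_i}e_j=0$ at $p$. With this choice
\begin{equation*}
\Delta^{\hhh,S}h(X,Y)=\sum_{i}(\nabla^{\hhh,S}_{e_i}\nabla^{\hhh,S}_{e_i}h)(X,Y),
\end{equation*}
and the whole task reduces to commuting these two covariant derivatives past the tensor slots $X,Y$, while quantifying the curvature and torsion corrections that emerge.

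First, I would apply a Codazzi-type identity from the sub-Riemannian Gauss-Codazzi equations of \Cref{gcsub} to exchange $(\nabla^{\hhh,S}_{e_i}h)(e_i,\cdot)\mapsto(\nabla^{\hhh,S}_X h)(e_i,e_i)$. The divergence piece generated here is a derivative of the horizontal mean curvature $H$, which vanishes by minimality, leaving only a correction coming from the ambient torsion. Because $[X_j,Y_j]=-2T$ and because $T$ splits along $S$ as $\s+\alpha\vh$, these corrections deposit both $\alpha^2$ terms (from the $\alpha\vh$-component of $T$) and tangential $\alpha$-derivatives along the rotated direction $\pi(J(\cdot))$ (from the $\s$-component). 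I would then use a Ricci-type identity on $S$ to commute $\nabla^{\hhh,S}_X\nabla^{\hhh,S}_{e_i}$ and invoke the Gauss equation to rewrite the resulting intrinsic horizontal curvature of $S$ in terms of the ambient pseudohermitian curvature of $\hn$ together with the quadratic composition $h\circ h$. A careful regrouping of the $h\circ h$ contributions, the $\alpha$-linear torsion pieces and the ambient pseudohermitian Ricci tensor reconstructs exactly the stability function $q=|\tilde h|^2+4\langle\nabla\alpha,J(\vh)\rangle+2(n+1)\alpha^2$ and produces the leading $-q\,h(X,Y)$ term. The remaining, genuinely sub-Riemannian contributions are the Hessian terms $\hess^{\hhh,S}\alpha(\pi(J(X)),Y)$ and $\hess^{\hhh,S}\alpha(X,\pi(J(Y)))$, the torsion-induced pieces of the form $\alpha^2 h$ and $\alpha h(\cdot,J(\vh))$, and the mixed term $2\alpha\langle J(\nabla_X\vh),\nabla_Y\vh\rangle$ coming from the compatibility of $J$ with $\nabla$.

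The main obstacle will be the bookkeeping required to isolate the correct numerical coefficients, in particular the factor $2(n+1)\alpha^2$ inside $q$ and the separated piece $8\alpha^2 h(X,Y)$. To manage it I would decompose the frame sum according to whether $e_i\in\hhh'TS$ or $e_i=J(\vh)$: the $J$-invariance of $\hhh'TS$ allows one to pair $e_i$ with $J(e_i)$ and fold most mixed terms into the two $\hess^{\hhh,S}\alpha$ contributions, whereas the singled-out $J(\vh)$ direction generates precisely the pieces containing $h(X,J(\vh))$, $h(Y,J(\vh))$ and $h(\pi(J(X)),\pi(J(Y)))$. Finally, I would verify the symmetry of the right-hand side under $(X,Y)\mapsto(Y,X)$, which serves as a nontrivial consistency check, and specialise to $n=1$ to confirm that the identity collapses to the known $\hh^1$-Simons formula, thereby corroborating the precise coefficients appearing in \eqref{newnewfullsimonsintroformula}.
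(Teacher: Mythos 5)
Your outline (Codazzi to exploit $H\equiv 0$, a Ricci-type commutation, the Gauss equation, and bookkeeping of torsion corrections along $\s$) is the same skeleton as the paper's argument, but the execution you propose rests on a frame that does not exist. You ask for a local orthonormal frame $\{e_1,\dots,e_{2n-2},J(\vh)\}$ of $\hhh TS$ with $\nabla^{\hhh,S}_{e_i}e_j=0$ at $p$. The field $J(\vh)$ is geometrically prescribed, and $\nabla^S_X J(\vh)=J(\nabla_X\vh)+h(X,J(\vh))\vh$ by \eqref{jcommutaeq}, so its covariant derivatives at $p$ are dictated by the second fundamental form and are generically nonzero (likewise, keeping the $e_i$ inside $\hhh'TS$ forces $\langle\nabla^S_{e_i}e_j,J(\vh)\rangle=-\langle e_j,J(\nabla_{e_i}\vh)\rangle\neq 0$ in general). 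This is precisely the obstruction the paper flags as the ``a priori lack of sub-Riemannian geodesic frames'': even for an unconstrained synchronous frame, the torsion $\tors(X,Y)=2\langle J(X),Y\rangle\s$ of \eqref{torsionofnablas} makes $[e_i,e_j](p)$ nonzero and non-horizontal, so the classical normal-frame simplifications are unavailable and the contributions $\nabla^S_\s h(\cdot,\cdot)$ must be evaluated explicitly through \eqref{nablaessenu}--\eqref{accaesse}. The paper's proof avoids any normal frame: it is a fully tensorial computation, iterating the Codazzi equation \eqref{codazzipersimons}, the torsion-corrected commutation of Hessians \eqref{commutatensoreconriemannpersimons}, the symmetry relation \eqref{simmetryandnabla} and the Gauss equation \eqref{gaussssssub}, with the trace terms killed by minimality only after the auxiliary identities of \Cref{adhoccomputsectionnnnnnn} (proved in an eigenframe of $\tilde h$, not a parallel one). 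Your proposal is silent on how to recover these steps once the adapted geodesic frame is removed, and that is the heart of the difficulty.

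Two further points would derail the suggested consistency checks. First, the source of the term $4\langle\nabla\alpha,J(\vh)\rangle$ inside $q$ cannot be an ``ambient pseudohermitian Ricci tensor'': the pseudohermitian curvature of $\hh^n$ vanishes identically ($R\equiv 0$), and in the paper those terms arise from derivatives of $h(\s,\cdot)$ via \eqref{accaesse}, i.e.\ from the torsion corrections, not from curvature. Second, checking symmetry of the right-hand side under $(X,Y)\mapsto(Y,X)$ is not a valid test: for $n\geq 2$ the form $h$ is not symmetric (cf.\ \eqref{comecommutah}), so neither side of \eqref{newnewfullsimonsintroformula} is symmetric in $(X,Y)$, and insisting on that symmetry would lead you to ``correct'' coefficients that are in fact right as stated.
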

Here $\hess^{\hhh,S}\alpha$ is the \emph{horizontal tangent Hessian} of $\alpha$, while $\pi$ is the projection onto the horizontal tangent distribution to $S$ (cf. \Cref{geompropsection}).}
The sub-Riemannian Simons formula is significantly more complex than \eqref{simonseuclideaintro}, clearly highlighting the influence of the non-commutative structure in which we are operating,
due to the involvement of the fundamental function $\alpha$ and of its first and second-order horizontal derivatives.
In this regard, the control provided by \eqref{graddialfasololungojeinuconk} allows us to establish a contracted version of \eqref{newnewfullsimonsintroformula}, which guarantees an explicit formula for $\hat\Delta^{\hhh,S}|\tilde h|^2$. Here $\hat\Delta^{\hhh,S}$ is the self-adjoint counterpart of $\Delta^{\hhh,S}$ introduced by Danielli, Garofalo and Nhieu in \cite{MR2354992}. 
\begin{theorem}[Contracted Simons formula]\label{newnewcontractedsimonsstatementintro}
   Let $S$ be a smooth, immersed, minimal hypersurface in $\hh^n$. Assume \eqref{graddialfasololungojeinuconk}. 
    Then 
    \begin{equation}\label{newnewcontractedsimonsintroformula}
    \begin{split}
         \frac{1}{2}\hat\Delta^{\hhh,S}|\tilde h|^2
        & =|\nabla^{\hhh,S}\tilde h|^2-q|\tilde h|^2+6\alpha^2|\tilde h|^2-6\alpha^2\left|\nabla_{J(\vh)}\vh\right|^2+4J(\vh)\alpha\,|\tilde h|^2-4J(\vh)\alpha\,\ell^2\\
        &\quad-\left(4J(\vh)\alpha+6\alpha^2\right) \left\langle\tilde h,\tilde h_J\right\rangle,
    \end{split}
\end{equation}
on the non-characteristic part of $S$. 
\end{theorem}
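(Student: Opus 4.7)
The plan is to convert the full pointwise Simons formula \eqref{newnewfullsimonsintroformula} into a scalar identity for $|\tilde h|^2$ by (i) symmetrizing it in $(X,Y)$ to obtain an equation for $\Delta^{\hhh,S}\tilde h$; (ii) contracting with $\tilde h$ over an orthonormal frame of $\hhh TS$; and (iii) inserting the output into a Bochner-type identity that expresses $\hat\Delta^{\hhh,S}|\tilde h|^2$ in terms of $\langle\hat\Delta^{\hhh,S}\tilde h,\tilde h\rangle$ and $|\nabla^{\hhh,S}\tilde h|^2$. Assumption \eqref{graddialfasololungojeinuconk} then collapses the bulky inhomogeneous part of \eqref{newnewfullsimonsintroformula} into the few scalar quantities visible on the right-hand side of \eqref{newnewcontractedsimonsintroformula}.

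I would first symmetrize \eqref{newnewfullsimonsintroformula} in $(X,Y)$: averaging the identity with the one obtained by swapping $X\leftrightarrow Y$ produces a closed formula for $\Delta^{\hhh,S}\tilde h(X,Y)$. Indeed, the $-qh$ and $8\alpha^2 h$ terms become $-q\tilde h$ and $8\alpha^2\tilde h$, the Hessian block is already symmetric in $(X,Y)$, and the remaining lines in \eqref{newnewfullsimonsintroformula} are explicitly arranged in pairs which symmetrize neatly. In parallel, I would establish a sub-Riemannian Bochner-Weitzenb\"ock identity of the form
\begin{equation*}
\tfrac{1}{2}\hat\Delta^{\hhh,S}|\tilde h|^2 = |\nabla^{\hhh,S}\tilde h|^2 + \langle\hat\Delta^{\hhh,S}\tilde h,\tilde h\rangle + (\text{torsion terms}),
\end{equation*}
via a direct computation in a local orthonormal frame, keeping careful track of the correction that arises from the difference between $\hat\Delta^{\hhh,S}$ and $\Delta^{\hhh,S}$; this correction is governed by the torsion of the pseudohermitian connection along the lines of the construction in \cite{MR2354992}.

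The decisive simplification comes from assumption \eqref{graddialfasololungojeinuconk}. Since $\hhh TS=\spann\{J(\vh)\}\oplus\hhh'TS$ and $\alpha$ is constant along $\hhh'TS$, the horizontal tangent gradient of $\alpha$ is purely proportional to $J(\vh)$, namely $\nabla^{\hhh,S}\alpha=(J(\vh)\alpha)J(\vh)$. Plugging this into the symmetrized Hessian block $4\hess^{\hhh,S}\alpha(\pi(JX),Y)+4\hess^{\hhh,S}\alpha(X,\pi(JY))$ and tracing against $\tilde h$ with respect to an orthonormal frame of $\hhh TS$ kills all Hessian contributions except for those carried by the $J(\vh)$-factor, which recombine to the scalar multiples $J(\vh)\alpha\,|\tilde h|^2$ and $J(\vh)\alpha\,\ell^2$ of \eqref{newnewcontractedsimonsintroformula}. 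The same observation reduces the isolated first-derivative occurrences $X\alpha$, $Y\alpha$, $\pi(J(X))\alpha$ and $(\nabla_X\vh)\alpha$ to multiples of $J(\vh)\alpha$.

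The final task is the algebraic packaging of the remaining traces involving $\nabla_X\vh$, $\nabla_{J(\vh)}\vh$, $h(Y,\nabla_{\pi(J(X))}\vh)$ and $\langle J(\nabla_X\vh),\nabla_Y\vh\rangle$. Using the sub-Riemannian Gauss-Codazzi identities from \Cref{gcsub} and the decomposition of $h$ into its symmetric and antisymmetric parts, these contractions collect into the invariants $|\tilde h|^2$, $\langle\tilde h,\tilde h_J\rangle$, $|\nabla_{J(\vh)}\vh|^2$ and $\ell^2$. I expect the main obstacle to be precisely this bookkeeping: isolating the symmetric and antisymmetric pieces of $h$ inside the traces so as to produce the clean combination $(4J(\vh)\alpha+6\alpha^2)\langle\tilde h,\tilde h_J\rangle$, and matching the residual torsion contributions against the correct multiple $-6\alpha^2|\nabla_{J(\vh)}\vh|^2$. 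This is where \eqref{graddialfasololungojeinuconk} plays its decisive role, by preventing the appearance of Hessian cross-terms along $\hhh'TS$ that would otherwise spoil the compact form of \eqref{newnewcontractedsimonsintroformula}.
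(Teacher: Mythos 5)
Your proposal follows essentially the same route as the paper: contract the full Simons identity \eqref{newnewfullsimonsintroformula} against $\tilde h$, combine it with the Bochner-type identity $\tfrac12\Delta^{\hhh,S}|\tilde h|^2=|\nabla^{\hhh,S}\tilde h|^2+\langle\tilde h,\Delta^{\hhh,S}\tilde h\rangle$ (the paper's \Cref{lapnormelapnonnorm}, proved in a diagonalizing frame of $\tilde h$, which makes your preliminary symmetrization step unnecessary), and use \eqref{graddialfasololungojeinuconk} to reduce all first- and second-order derivatives of $\alpha$ to multiples of $J(\vh)\alpha$. The only organizational difference is that the paper keeps the non-self-adjoint $\Delta^{\hhh,S}$ throughout and recovers $\hat\Delta^{\hhh,S}$ at the end through the identity \eqref{migliorie1} for $\langle\nabla^S|\tilde h|^2,J(\vh)\rangle$ (which simultaneously absorbs the traced term $-2\alpha\langle\nabla_X\nabla_{J(\vh)}\vh,Y\rangle$), rather than building the $\hat\Delta$-correction into the Bochner identity as you suggest; this is a repackaging of the same computation, not a different argument.
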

Here $h_J(X,Y)=\tilde h(\pi(J(X)),\pi(J(Y))$ for any $X,Y\in\Gamma(\hhh TS)$, and $\ell=\tilde h(J(\vh),J(\vh))$.
We stress that the \emph{a priori} lack of sub-Riemannian \emph{geodesic frames} requires some delicate \emph{ad hoc} computations (cf. \Cref{adhoccomputsectionnnnnnn}). Although a result of this kind holds in arbitrary dimension, %
the specific structure of $\hh^2$ 
allows to provide a precise description of the last term appearing in \eqref{newnewcontractedsimonsintroformula} (cf. \Cref{newnewpropsuformadiscalprod}), namely
       \begin{equation}\label{newnewcontractedsimonsinh2intro}
    \begin{split}
         \frac{1}{2}\hat\Delta^{\hhh,S}|\tilde h|^2
        & =|\nabla^{\hhh,S}\tilde h|^2-q|\tilde h|^2+4\alpha^2|\tilde h|^2-8\alpha^2\left|\nabla_{J(\vh)}\vh\right|^2+2\alpha^2\ell^2\\
        &\quad+4\left(J(\vh)\alpha+\alpha^2\right)\left( 2|\tilde h|^2-4\left|\nabla_{J(\vh)}\vh\right|^2+\ell^2\right)\\
        &\quad+\left(8J(\vh)\alpha+6\alpha^2\right) \left(\left|\nabla_{J(\vh)}\vh\right|^2-\ell^2\right)
    \end{split}
\end{equation}
on the non-characteristic part of $S$. In turn, the structure of \eqref{newnewcontractedsimonsinh2intro} highlights in a natural way one of the roles played by \eqref{taiwansplitting} and \eqref{hpintermediatraqejvraffinata}, guaranteeing a lower bound for $\hat\Delta^{\hhh,S}|\tilde h|^2$ of the form       \begin{equation}\label{newnewcontractedsimonsinh2introintro}
    \begin{split}
         \frac{1}{2}\hat\Delta^{\hhh,S}|\tilde h|^2
        \geq|\nabla^{\hhh,S}\tilde h|^2-q|\tilde h|^2+\alpha^2\left(4|\tilde h|^2-6\ell^2\right)   
    \end{split}
\end{equation}
on the non-characteristic part of $S$.
We point out that the lower bound provided by \eqref{newnewcontractedsimonsinh2introintro} is sharp in the class of hypersurfaces satisfying \eqref{taiwansplitting}, \eqref{graddialfasololungojeinuconk} and \eqref{hpintermediatraqejvraffinata}, being saturated by suitable catenoidal-type hypersurfaces (cf. \Cref{newnewexamplecatenoidalquattro}).

\subsection*{Kato inequalities}
All the aforementioned contracted Simons inequality are characterized by the presence of the gradient term $|\nabla^{\hhh,S}\tilde h|$. While the latter always admits the trivial lower bound 
\begin{equation}\label{newnewkatostatement1conkintro}
             |\nabla^{\hhh,S}|\tilde h|^2|^2\leq 4|\tilde h|^2|\nabla^{\hhh,S}\tilde h|^2,
         \end{equation}
the minimality of $S$, coupled with \eqref{taiwansplitting} and \eqref{graddialfasololungojeinuconk}, allows to improve \eqref{newnewkatostatement1conkintro} to a parametric sub-Riemannian Kato inequality in the spirit of \eqref{katoeuclideaintro}.
\begin{theorem}[Improved Kato inequalities]\label{newnewkatoconkintro}
      Let $S$ be a smooth, immersed hypersurface in $\hh^n$. Then \eqref{newnewkatostatement1conkintro} holds. Assume that $S$ is minimal. Assume \eqref{taiwansplitting} and \eqref{graddialfasololungojeinuconk}. 
    Then \begin{equation}\label{newnewkatostatement2conkintro}
            \left(1+\frac{k}{2n-1}\right)|\nabla^{\hhh,S}|\tilde h|^2|^2\leq 4|\tilde h|^2|\nabla^{\hhh,S}\tilde h|^2 +4\alpha^2|\tilde h|^2\left((4k-2)|\tilde h|^2+(2+2kn-2k-4n)\hnn^2\right)
         \end{equation}
        on the non-characteristic part of $S$ for any $k\in [0,2]$.
         \end{theorem}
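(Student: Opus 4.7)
My plan for the trivial inequality \eqref{newnewkatostatement1conkintro} is immediate. Fixing an orthonormal frame $\{e_i\}_{i=1}^{2n-1}$ of $\hhh TS$, one has $e_i|\tilde h|^2 = 2\langle \nabla^{\hhh,S}_{e_i}\tilde h, \tilde h\rangle$, so Cauchy-Schwarz applied at each $i$ gives $(e_i|\tilde h|^2)^2 \leq 4|\tilde h|^2 |\nabla^{\hhh,S}_{e_i}\tilde h|^2$, and summing over $i$ yields the claim. No minimality or structural assumption is needed here.

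For the parametric improvement I would follow the template of Schoen-Simon-Yau, adapted to the sub-Riemannian setting. At a fixed non-characteristic point of $S$, I would pick an orthonormal frame $e_1, \ldots, e_{2n-2}, e_{2n-1} = J(\vh)$ of $\hhh TS$, using \eqref{taiwansplitting} to ensure that $J(\vh)$ is already an eigenvector of $\tilde h$, and then further diagonalizing $\tilde h$ on the invariant $(2n-2)$-dimensional subspace $\hhh'TS$. In this frame $\tilde h(e_i,e_j)=\lambda_i\delta_{ij}$ with $\lambda_{2n-1} = \ell$, and the minimality of $S$ translates into the trace-free constraint $\sum_{i=1}^{2n-1}\lambda_i = 0$.

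The crux of the improvement lies in exploiting the symmetries of the components $h_{ijm} = (\nabla^{\hhh,S}_{e_m}\tilde h)(e_i, e_j)$ coming from the sub-Riemannian Codazzi identities established in \Cref{gcsub}. Unlike their Riemannian counterparts, these involve explicit torsion contributions proportional to $\alpha$ and to its horizontal derivatives, so the interchange of the indices $j$ and $m$ in $h_{ijm}$ is allowed only modulo controlled error terms. Here \eqref{graddialfasololungojeinuconk} plays a decisive role by annihilating the $\hhh'TS$-derivatives of $\alpha$, confining the residual contributions to expressions that match, term by term, the explicit quantity $4\alpha^2|\tilde h|^2((4k-2)|\tilde h|^2 + (2+2kn-2k-4n)\ell^2)$ on the right-hand side of \eqref{newnewkatostatement2conkintro}.

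Once the Codazzi error is under control, I would write $e_m|\tilde h|^2 = 2\sum_i \lambda_i h_{iim}$ and invoke the Schoen-Simon-Yau trick: the trace-free constraint $\sum_i\lambda_i=0$ prevents $(\lambda_1,\ldots,\lambda_{2n-1})$ from being parallel to $(1,\ldots,1)$, producing a strict improvement over the naive Cauchy-Schwarz bound. Redistributing the off-diagonal gradient components $h_{ijm}$ via the Codazzi symmetries, and interpolating with the free parameter $k\in[0,2]$ between the two available estimates, yields the sharp coefficient $1+k/(2n-1)$. The main obstacle I anticipate is the precise bookkeeping of the sub-Riemannian torsion: since $h_{ijm}$ is no longer symmetric in $(j,m)$, isolating the purely tensorial part to which Schoen-Simon-Yau's convexity argument applies, while grouping every residual term into the explicit $\alpha^2$-dependent remainder prescribed by \eqref{newnewkatostatement2conkintro}, is the delicate step, and is where \eqref{taiwansplitting} and \eqref{graddialfasololungojeinuconk} must be used in tandem.
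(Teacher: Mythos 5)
Your outline follows the same route as the paper: Cauchy--Schwarz for the crude bound \eqref{newnewkatostatement1conkintro}, and for the improvement a frame diagonalizing $\tilde h$ with $J(\vh)$ as an eigenvector (via \eqref{taiwansplitting}), the minimality constraint, Codazzi identities with torsion corrections controlled by \eqref{graddialfasololungojeinuconk}, and interpolation in the parameter $k$. The first part of your argument is fine as stated.

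For the parametric inequality, however, there is a genuine gap: the entire quantitative content of \eqref{newnewkatostatement2conkintro}, namely the coefficients $(4k-2)$ and $(2+2kn-2k-4n)$, is asserted (``the residual contributions \dots match, term by term'') rather than derived, and this is exactly where the work lies. In the paper one first bounds $|\nabla^{\hhh,S}|\tilde h|^2|^2\leq 4(2n-1)|\tilde h|^2\sum_{i\neq j}\bigl(\nabla^S_{\E_i}\tilde h(\E_j,\E_j)\bigr)^2$ using \eqref{nablahtrace}; note that the mechanism here is the vanishing of $\sum_j\nabla^S_{\E_i}\tilde h(\E_j,\E_j)$, i.e.\ the derivative of the trace, not merely the pointwise relation $\sum_i\lambda_i=0$ that you invoke. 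One then introduces the exchange defect $\mathcal R=\sum_{i\neq j}\bigl(\nabla^S_{\E_i}\tilde h(\E_j,\E_j)\bigr)^2-\sum_{i\neq j}\bigl(\nabla^S_{\E_j}\tilde h(\E_i,\E_j)\bigr)^2$ and evaluates it through the Codazzi equation for the \emph{symmetric} form $\tilde h$ (\eqref{codazzihtildeeqaba}, a consequence of \Cref{codazzipertildehpropositionnnnnnnnnnnnnn}, not just the equation for $h$ in \Cref{gcsub} that you cite), obtaining under \eqref{taiwansplitting} and \eqref{graddialfasololungojeinuconk} that $\mathcal R=3\alpha^2|\tilde h|^2-\alpha^2\hnn^2$, together with the separate lower bound $\sum_{i\neq j}\bigl(\nabla^S_{\E_j}\tilde h(\E_i,\E_j)\bigr)^2\geq\alpha^2|\tilde h|^2+(2n-1)\alpha^2\hnn^2$. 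Weighting these two identities by $k$ and by $(k-2)\leq 0$ respectively is precisely what produces the stated $\alpha^2$-remainder; the restriction $k\in[0,2]$ enters only through the sign of $k-2$. Without carrying out these two computations your proposal establishes only the shape of the argument, not the inequality itself.
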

Even just focusing on the second Heisenberg group, the last terms appearing respectively in \eqref{newnewcontractedsimonsinh2introintro} and \eqref{newnewkatostatement2conkintro} constitute a crucial novelty compared to the Euclidean setting. Indeed, the presence of these two \emph{$\alpha^2$-remainders} is essentially due to the fact that, differently from the Euclidean setting, the lack of torsion-freeness of $\nabla$ prevents $\tilde h$ from being a \emph{Codazzi tensor} (cf. \Cref{codazzipertildehpropositionnnnnnnnnnnnnn}). Consequently, the parametric nature of the improved Kato inequality is crucial in managing to control these additional terms. While the $\alpha^2$-reminder in \eqref{newnewcontractedsimonsinh2introintro} is constrained by \eqref{hpintermediatraqejvraffinata}, the freedom to choose $k\in[0,2]$ in \eqref{newnewkatostatement2conkintro} allows to balance between the contribution of the gradient term and the $\alpha^2$-reminder. Roughly speaking, for a better control on the latter we need to pay a worse contribution from the former. A careful combination of \eqref{newnewcontractedsimonsinh2introintro} with \eqref{newnewkatostatement2conkintro} (cf. \Cref{secsimonskato}), allows to establish the sub-Riemannian counterpart of \eqref{simonskatoeuclideaintro}, whose version in $\hh^2$ reads as
\begin{equation}\label{simonskatosubintro}
       2|\tilde h|^2\hat\Delta^{\hhh,S}|\tilde h|^2\geq \left(1+\frac{k}{3}\right)|\nabla^S|\tilde h|^2|^2-4q|\tilde h|^4+4\alpha^2|\tilde h|^2\fisd.
\end{equation}
Here $\fisd$ is the contribution coming from the two $\alpha^2$-reminders of \eqref{newnewcontractedsimonsinh2introintro} and \eqref{newnewkatostatement2conkintro} (cf. \eqref{doveedefinitafis}). 
\subsection*{Integral curvature estimates} Once \eqref{simonskatosubintro} is achieved, and as soon as it is possible to choose $k\in [0,2]$ small enough to ensure that  $\fisd\geq 0$, the stability inequality \eqref{stabilitaintro} implies sub-Riemannian integral curvature estimates of the following form
\begin{theorem}[Integral curvature estimates in $\hh^2$]\label{newnewmainstabilityintrostatement}
Let $S\subseteq\hh^2$ be a smooth, connected, two-sided, embedded, stable, non-characteristic hypersurface. %
Assume \eqref{taiwansplitting},  \eqref{graddialfasololungojeinuconk} and \eqref{hpintermediatraqejvraffinata}. Assume that $\fisd(p)\geq 0$ 
        for a given $k\in [0,2]$ and for any $p\in S$. Let $\beta\in  \left[  1-\frac{k}{2n-1},1+\sqrt{\frac{k}{2n-1}} \right)$. There exists a constant $C=C(\beta,k)>0$, thus independent on $S$, such that
\begin{equation}\label{newnewmaineqstabintro}
   \int_S|\tilde h|^{2\beta+2}\varphi^{2\beta+2}\,d \sigma_\hhh\leq C\int_S|\nabla^{\hhh,S}\varphi|^{2\beta+2}\,d \sigma_\hhh
\end{equation}
for any $\varphi\in C^1_c(S)$.
    \end{theorem}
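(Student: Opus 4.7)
The plan is to adapt the Schoen-Simon-Yau test function technique to the sub-Riemannian framework, combining the contracted Simons-Kato inequality \eqref{simonskatosubintro} with the stability inequality \eqref{stabilitaintro}. Setting $u=|\tilde h|$ for brevity, the first move is to rewrite \eqref{simonskatosubintro} in SSY-friendly form: using $\frac{1}{2}\hat\Delta^{\hhh,S} u^2 = u\,\hat\Delta^{\hhh,S} u + |\nabla^{\hhh,S} u|^2$ together with $|\nabla^{\hhh,S}u^2|^2 = 4u^2|\nabla^{\hhh,S} u|^2$, a division by $2u^2$ on $\{u>0\}$ yields the Bochner-type bound
\begin{equation*}
u\,\hat\Delta^{\hhh,S} u \geq \frac{k}{2n-1}|\nabla^{\hhh,S} u|^2 - q u^2 + \alpha^2 \fisd.
\end{equation*}
The vanishing locus $\{u=0\}$ is handled through the standard regularization $u_\varepsilon=(u^2+\varepsilon)^{1/2}$, taking $\varepsilon\to 0^+$ at the end.

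Fix $\varphi\in C^1_c(S)$ and $\beta$ in the allowed range, and test \eqref{stabilitaintro} with $\xi = u^\beta \varphi^{\beta+1}$. Since \eqref{hpintermediatraqejvraffinata} yields the pointwise inequality $q\geq u^2$, the stability inequality gives the chain
\begin{equation*}
\int_S u^{2\beta+2}\varphi^{2\beta+2}\, d\sigma_\hhh \leq \int_S q\, u^{2\beta}\varphi^{2\beta+2}\, d\sigma_\hhh \leq \int_S |\nabla^{\hhh,S}(u^\beta\varphi^{\beta+1})|^2\, d\sigma_\hhh.
\end{equation*}
Expanding the rightmost integrand, the task reduces to controlling the quantity $\int_S u^{2\beta-2}|\nabla^{\hhh,S}u|^2\varphi^{2\beta+2}\, d\sigma_\hhh$ (the $|\nabla u|^2$-component) together with the cross term, since the remaining $\int_S u^{2\beta}\varphi^{2\beta}|\nabla^{\hhh,S}\varphi|^2$-contribution is already of an acceptable form.

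To control the $|\nabla u|^2$-component, the next step is to multiply the Bochner-type inequality above by $u^{2\beta-2}\varphi^{2\beta+2}$, integrate, and integrate by parts using the self-adjointness of $\hat\Delta^{\hhh,S}$ proven in \cite{MR2354992}. Combining with the stability chain and discarding the non-negative contribution involving $\fisd$ produces an inequality whose leading coefficient on the $|\nabla u|^2$-integral equals $k/(2n-1)-(\beta-1)^2$, which is strictly positive precisely under the upper bound $\beta<1+\sqrt{k/(2n-1)}$. A first Young's inequality applied to the ensuing cross term then yields
\begin{equation*}
\int_S u^{2\beta-2}|\nabla^{\hhh,S}u|^2\varphi^{2\beta+2}\, d\sigma_\hhh \leq C_1(\beta,k) \int_S u^{2\beta}\varphi^{2\beta}|\nabla^{\hhh,S}\varphi|^2\, d\sigma_\hhh.
\end{equation*}
Plugging this into the stability chain gives $\int_S u^{2\beta+2}\varphi^{2\beta+2}\leq C_2\int_S u^{2\beta}\varphi^{2\beta}|\nabla^{\hhh,S}\varphi|^2$, and a concluding Young's inequality with conjugate exponents $\frac{\beta+1}{\beta}$ and $\beta+1$ converts the right-hand side into $\int_S|\nabla^{\hhh,S}\varphi|^{2\beta+2}\, d\sigma_\hhh$ up to an absorbable fraction of the left-hand side. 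The lower bound $\beta\geq 1-k/(2n-1)$ is what keeps every Young's parameter admissible and every absorption constant strictly positive throughout.

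The main technical hurdle is the approximation at the zero locus $\{|\tilde h|=0\}$: both the division by $u^2$ in the Bochner normalization and the test function $u^\beta$ with $\beta<1$ are singular there, so the regularization $u_\varepsilon$ has to be carried through every estimate and the $\varepsilon$-dependent error terms shown to vanish in the limit. A secondary subtlety is the justification of integration by parts, which demands the symmetric operator $\hat\Delta^{\hhh,S}$ (rather than $\Delta^{\hhh,S}$) and the fact that $\mathrm{supp}\,\varphi$ lies inside the non-characteristic part of $S$, where the horizontal tangent gradient and surface measure are well defined.
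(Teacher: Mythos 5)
Your plan is correct and follows essentially the same route as the paper's proof: test the stability inequality with (a desingularization of) $|\tilde h|^\beta$ times a cut-off, feed in the Simons--Kato inequality \eqref{simonskatoeqnugualedue}, integrate by parts using the self-adjointness of $\hat\Delta^{\hhh,S}$, and absorb via a parameterized Young inequality, the admissible range of $\beta$ coming precisely from the positivity of $\tfrac{k}{2n-1}-(\beta-1)^2$; your regularization $u_\varepsilon=(|\tilde h|^2+\varepsilon)^{1/2}$ is exactly the paper's $A(\delta)$. The only differences are cosmetic (testing with $u^\beta\varphi^{\beta+1}$ at once and closing with Young, versus the paper's $A(\delta)^\beta\varphi$ followed by the substitution $\varphi\mapsto\varphi^{\beta+1}$ and H\"older, and the paper's separate, simpler treatment of the case $\beta<1$).
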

Relying on \eqref{newnewcontractedsimonsinh2introintro} and \eqref{newnewkatostatement2conkintro}, the above statement is formulated in $\hh^2$ at least for what concerns the $\alpha^2$-remainder $\fisd$. Clearly, similar conditions are available in arbitrary dimension (cf. \Cref{secsimkatnew}).

\subsection{An application to the Bernstein problem}
Heuristically, the application of \eqref{newnewmaineqstabintro} to the solution to the Bernstein problem requires $\beta$ to be chosen as close as possible to its upper bound, since the bigger is the latter, the higher is the dimension $n$ in which we can apply this approach. Indeed, under natural sub-Riemannian volume growth assumptions of the form  
\begin{equation}\label{updesubintro}
     \sigma_\hhh(S\cap B_r(p))=O\left(r^{2n+1}\right)
\end{equation}
as $r\to\infty$ (cf. \Cref{abttsect}), a suitable choice of a sequence of cut-off functions and \eqref{newnewmaineqstabintro} imply that
\begin{equation*}
     \int_{S\cap B_{r}(p)}|\tilde h|^{2\beta+2}\,d \sigma_\hhh =O\left(r^{2n-1-2\beta}\right)
\end{equation*}
as $r\to\infty$. All in all, then, we would like to choose $k$ large enough to ensure that $2n-1-2\beta\leq 0$, but still small enough to ensure that $\fisd\geq 0$. Regarding the first condition, it is easy to verify that even the optimal choice $k=2$ allows only the case $n=2$. On the other hand, when $n=2$, every choice of $k\in\left(\frac{3}{4},2\right]$ is an admissible candidate (cf. \Cref{abttsect}). 
Finally, when $n=2$ every choice of $k\in\left[0,\frac{9}{8}\right]$ ensures that $\fisd\geq 0$ (cf. \eqref{nuovosemplice}). In this way $\tilde h\equiv 0$, whence, by \cite{ruled}, $S$ is a vertical hyperplane. As a corollary of the previous results, we achieved the following Bernstein-type rigidity result
    
    \begin{theorem}\label{mainmainmainmainintro}
        Let $S\subseteq\hh^2$ be a smooth, complete, connected, embedded, two sided non-characteristic hypersurface. Assume that $S$ is stable. Assume that $S$ verifies \eqref{taiwansplitting}, \eqref{graddialfasololungojeinuconk} and \eqref{hpintermediatraqejvraffinata}. Assume in addition that there exists $p\in S$ such that \eqref{updesubintro} holds.
        Then $S$ is a vertical hyperplane.
    \end{theorem}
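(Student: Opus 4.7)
The plan is to combine the integral curvature estimate of \Cref{newnewmainstabilityintrostatement} with the volume growth hypothesis \eqref{updesubintro} through a standard cut-off argument, deduce that the symmetric horizontal second fundamental form $\tilde h$ vanishes identically on $S$, and then invoke the classification of hypersurfaces with $\tilde h\equiv 0$ from \cite{ruled} to conclude that $S$ is a vertical hyperplane.

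The first step is a careful calibration of the parameters. Since $n=2$, I need $\beta>3/2$ so that in the subsequent cut-off argument the right-hand side actually tends to zero. The range of $\beta$ allowed by \Cref{newnewmainstabilityintrostatement} is $\beta\in\bigl[1-\tfrac{k}{3},\,1+\sqrt{\tfrac{k}{3}}\bigr)$, so the strict inequality $\beta>3/2$ forces $k>3/4$. On the other hand, the non-negativity assumption $\fisd\ge 0$ used in \Cref{newnewmainstabilityintrostatement} is verified in $\hh^2$ whenever $k\in[0,9/8]$, as highlighted in the discussion preceding the theorem. I would therefore fix once and for all $k\in(3/4,9/8]$ and $\beta\in(3/2,1+\sqrt{k/3})$, a non-empty choice, obtaining from \eqref{newnewmaineqstabintro} a constant $C=C(\beta,k)>0$ such that
\begin{equation*}
\int_S|\tilde h|^{2\beta+2}\varphi^{2\beta+2}\,d\sigma_\hhh\;\le\; C\int_S|\nabla^{\hhh,S}\varphi|^{2\beta+2}\,d\sigma_\hhh\qquad\text{for every }\varphi\in C^1_c(S).
\end{equation*}

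Next, let $p\in S$ be the point provided by the hypothesis, and test the inequality above on a sequence of Lipschitz cut-offs $\varphi_r\in C^1_c(S)$ with $\varphi_r\equiv 1$ on $S\cap B_r(p)$, $\supp\varphi_r\subseteq S\cap B_{2r}(p)$ and $|\nabla^{\hhh,S}\varphi_r|\le C/r$. Combining this with the sub-Riemannian volume growth \eqref{updesubintro} yields
\begin{equation*}
\int_{S\cap B_r(p)}|\tilde h|^{2\beta+2}\,d\sigma_\hhh\;\le\; C\,r^{-(2\beta+2)}\,\sigma_\hhh\!\bigl(S\cap B_{2r}(p)\bigr)\;=\;O\!\bigl(r^{\,3-2\beta}\bigr).
\end{equation*}
Since $\beta>3/2$ the right-hand side tends to zero as $r\to\infty$, while the left-hand side is non-decreasing in $r$; this forces $|\tilde h|\equiv 0$ on $S$. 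The rigidity result of \cite{ruled} recalled in the introduction then identifies the smooth, complete, connected, embedded, non-characteristic hypersurface $S\subseteq\hh^2$ as a vertical hyperplane.

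The main obstacle is not any individual computation but the delicate balancing encoded in the calibration step: one must simultaneously secure $\fisd\ge 0$, a non-empty Kato-type range for $\beta$, and the summability threshold $\beta>(2n-1)/2$. In $\hh^2$ the window for $k$ just barely closes, and for $n\ge 3$ the same scheme breaks down precisely because the analogous window becomes empty; this is the geometric reason why the present approach singles out the second Heisenberg group.
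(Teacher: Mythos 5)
Your proposal is correct and follows essentially the same route as the paper: the same calibration $k\in\left(\tfrac{3}{4},\tfrac{9}{8}\right]$ and $\beta>\tfrac{3}{2}$, the same cut-off argument combined with the volume growth to get the $O\!\left(r^{3-2\beta}\right)$ decay forcing $\tilde h\equiv 0$, and the same appeal to the rigidity theorem of \cite{ruled}. The only point treated more explicitly in the paper is the construction of the cut-offs (via an ambient horizontal cut-off from \cite{MR1221840}, restricted to $S$, with completeness of $S$ guaranteeing compact support on $S$), which you assert rather than construct, but this is a standard step.
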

We point out that this approach would be pointless in $\hh^1$. Indeed, in $\hh^1$, every minimal surface satisfies $\tilde{h}\equiv0$, but there are examples of minimal non-characteristic surfaces which are not vertical planes (cf. \cite{MR2472175}). This difference between $\hh^1$ and the higher dimensional case may be explained by the fact that the horizontal tangent distribution $\hhh TS$ is bracket-generating if, and only if, $n\geq 2$ (cf. \cite{MR4103357}). A relevant instance of this phenomenon can be appreciated in the different approaches to regularity employed in $\hh^1$ \cite{MR2583494} and  in higher dimension \cite{MR2774306}. 

\subsection*{Plan of the paper} In \Cref{prelisection}, we recall some preliminaries concerning the Heisenberg group. In \Cref{geompropsection}, we collect several properties of hypersurfaces. In \Cref{adhoccomputsectionnnnnnn}, we deduce useful computational features of the symmetric form $\tilde h$. In \Cref{secsimkatnew}, we introduce \eqref{taiwansplitting}, \eqref{graddialfasololungojeinuconk}, and \eqref{hpintermediatraqejvraffinata}, and we prove \Cref{newnewfullsimonsintro}, \Cref{newnewcontractedsimonsstatementintro}, \Cref{newnewkatoconkintro}, and \eqref{simonskatosubintro}. In \Cref{stabilitisectionnnnnn}, we prove \Cref{newnewmainstabilityintrostatement}. Finally, in \Cref{abttsect}, we prove \Cref{mainmainmainmainintro}. 
In \Cref{appendice} we propose a weaker form of \eqref{hpintermediatraqejvraffinata}. 
\subsection*{Acknowledgements} The authors are grateful to Otis Chodosh, Manuel Ritoré and Francesco Serra Cassano for the many stimulating conversations on the topics discussed in this paper.

 \section{Preliminaries}\label{prelisection}
 \subsection{The Heisenberg group}\label{heissectionnnnnnnnn}
In the following, we denote by $\Gamma(T\hh^n)$ and $\Gamma(\hhh)$ the families of smooth vector fields and of smooth horizontal vector fields respectively. 
The \emph{complex structure} $J:\Gamma(T\hh^n)\longrightarrow\Gamma(T\hh^n)$ is the unique $C^\infty(\hh^n)$-linear map which satisfies
\begin{equation*}
    J(X_i)=Y_i,\qquad J(Y_i)=-X_i\qquad\text{and}\qquad J(T)=0
\end{equation*}
for any $i=1,\ldots,n$. In particular, observe that
\begin{equation}\label{jeskew}
    J(J(X))=-X\qquad\text{and}\qquad\langle X,J(X)\rangle =0
\end{equation}
for any $X\in\Gamma(\hhh)$. The latter, together with the distribution $\hhh$, realizes $\hh^n$ as a \emph{pseudohermitian manifold} (cf. \cite[Appendix]{MR2165405}). 
We recall that $\langle\cdot,\cdot\rangle$ is the unique Riemannian metric which makes $X_1,\ldots,X_n,Y_1,\ldots,Y_n,T$ orthonormal. 
Restricting it to $\hhh$, and still denoting this restriction by $\langle\cdot,\cdot\rangle$, $\hn$ inherits a \emph{sub-Riemannian structure} which realizes it as a \emph{sub-Riemannian manifold.} We denote by $|\cdot|$ the norm induced by $\langle\cdot,\cdot\rangle$.
Moreover, we denote by $\nabla$ the so-called \emph{pseudohermitian connection} (cf. e.g. \cite{MR4193432}), i.e. the unique \emph{metric connection} (cf. \cite{MR1138207}) whose torsion tensor is
\begin{equation}\label{pseudotorsion}
    \nabla_X Y-\nabla_Y X-[X,Y]=2\langle J(X),Y\rangle T
\end{equation}
for any $X,Y\in\Gamma(T\hh^n)$. Although $\nabla$ is not a torsion-free connection, it has the advantage of  vanishing along left-invariant vector fields, meaning that
\begin{equation}\label{phflat}
    \nabla_{Z_i}Z_j=0
\end{equation}
for any $i,j=1,\ldots,2n+1$ (cf. \cite{MR2898770}). In view of \eqref{phflat}, it is easy to check that
\begin{equation*}
    X\in\Gamma(T\hh^n),\,Y\in\Gamma(\hhh)\qquad\implies\qquad\nabla_X Y\in\Gamma(\hhh).
\end{equation*}
Moreover, denoting by $R$ the curvature tensor associated with $\nabla$, i.e.
 \begin{equation*}
     R(X,Y)Z=\nabla_X \nabla_Y Z-\nabla_Y\nabla_XZ-\nabla_{[X,Y]}Z
 \end{equation*}
for any $X,Y,Z\in\Gamma(T\hh^n)$, \eqref{phflat} implies that $R\equiv0$, whence $\hh^n$ is flat from the pseudohermitian standpoint.
The pseudohermitian connection is related to the complex structure by 
\begin{equation}\label{jcommutaeq}
        \nabla_XJ(Y)=J(\nabla_X Y).
    \end{equation}
for any $X,Y\in\Gamma(T\hn)$ (cf. e.g. \cite{MR2214654}).
Given a function $f\in C^\infty(\hh^n)$, we denote by 
\begin{equation*}
    \nabla f=\sum_{j=1}^{2n+1}\left(Z_jf\right)Z_j\qquad\text{and}\qquad \nabla^\hhh f=\sum_{j=1}^{2n}\left(Z_jf\right)Z_j\qquad
\end{equation*}
the gradient and the horizontal gradient associated with the pseudohermitian connection $\nabla$.
\subsection{Carnot-Carathéodory structure}
An absolutely continuous curve $\Gamma:[a,b]\scu \hn$ is \emph{horizontal} if
\begin{equation}\label{horiz}
 \Dot\Gamma(t)\in\hhh_{\Gamma(t)}
\end{equation}
for a.e. $t\in [a,b]$, and it is \emph{sub-unit} if it is horizontal and $|\Dot\Gamma(t)|\leq 1$ for a.e. $t\in[a,b]$. If we define 
\begin{equation*}
    d(p,q):=\inf\{b\,:\,\Gamma:[0,b]\scu \mathbb{H}^n\text{ is sub-unit, $\Gamma(0)=p$ and $\Gamma(b)=q$}\},
\end{equation*}
then Chow-Rashevskii theorem (cf. e.g. \cite{MR2363343}) implies that $d$ is a distance on $\mathbb{H}^n$, the so-called \emph{Carnot-Carathéodory distance}. The metric space $(\hn,d)$ is a prototype of Carnot-Carathéodory space.
For any $r>0$ and any $p\in\hh^n$, we let $B_r(p)$ be the open ball of radius $r$ centered at $p$ induced by $d$. 
\subsection{Perimeter and perimeter minimizers}  Let $\Om\subseteq\hn$ be open and $E\subseteq \hn$ measurable. The \emph{horizontal perimeter} (or \emph{$\hh$-perimeter}) of $E$ in $\Om$ (cf. e.g. \cite{MR1871966, MR1984849,MR1404326}) is defined by
\begin{equation*}
    P_{\mathbb H}(E,\Om):=\sup\left\{\int_E\divv_{\hhh}(\bar\varphi)\,d\mathcal{L}^{2n+1}\,:\,\bar\varphi\in C^1_c(\Om,\mathcal{H}),\,|\bar\varphi|_p\leq 1\text{ for any }p\in\Om\right\},
\end{equation*}
where by $C^1_c(\Om,\hhh)$ we denote the class of compactly supported horizontal vector fields defined on $\Om$, and where $\divv_{\hhh}$ is the so-called \emph{horizontal divergence}, defined by 
\begin{equation*}
    \divv_{\hhh}\left(\sum_{j=1}^n(\varphi_jX_j+\varphi_{n+j}Y_j)\right):=\sum_{j=1}^n(X_j\varphi_j+ Y_j\varphi_{n+j})
\end{equation*}
for any $\sum_{j=1}^n(\varphi_jX_j+\varphi_{n+j}Y_j)\in C^1(\Om,\hhh)$. We say that $E$ is an \emph{$\mathbb H$-Caccioppoli set} whenever $P_{\mathbb H}(E,\Om)<+\infty$ for any bounded open set $\Om\subseteq\hn$. Finally, we recall (cf. e.g. \cite{MR3587666}) that an $\mathbb H$-Caccioppoli set $E$ is an \emph{$\mathbb H$-perimeter minimizer} in $\Om$ whenever
\begin{equation*}
    P_{\mathbb H}(E,\Om)\leq P_{\mathbb H}(F,\Om)
\end{equation*}
for any $\Om \Subset\hn$ and for any $\mathbb H$-Caccioppoli set $F$ such that $E\Delta F\Subset\Om$. When $E$ is an $\mathbb H$-perimeter minimizer in $\Om=\hh^n$, we refer to it as \emph{global $\mathbb H$-perimeter minimizer}.
\section{Geometric properties of non-characteristic hypersurfaces}\label{geompropsection}
\subsection{Non-characteristic hypersurfaces}\label{noncharhypersectionnnnnnnnnn}
Let $S\subseteq\hh^n$ be a smooth, immersed hypersurface without boundary. We recall (cf. e.g. \cite{MR3587666}) that a point $p\in S$ is called \emph{characteristic} when
\begin{equation*}
    \hhh_p=T_p S,
\end{equation*}
and is called \emph{non-characteristic} otherwise. In the latter case, the \emph{horizontal tangent space}
\begin{equation*}
    \hhh T_p S=\hhh_p\cap T_p S
\end{equation*}
is a $(2n-1)$-dimensional vector space. The set of characteristic points of $S$ is denoted by $S_0$ and is called the \emph{characteristic set} of $S$. When $S_0=\emptyset$, $S$ is called \emph{non-characteristic}, and the \emph{horizontal tangent distribution} $\hhh TS$ is actually a constant-rank sub-bundle of $TS$. According to the previous notation, we denote by $\Gamma(TS)$ and by $\Gamma(\hhh TS)$ the families of smooth vector fields which are tangent to $S$ and which are horizontal and tangent to $S$ respectively. In the following, unless otherwise specified, we assume that $S$ is a smooth, immersed, non-characteristic hypersurface without boundary. When our statements are of local nature, we assume without loss of generality that $S$ is two-sided and embedded. We denote by $\normale$ its Riemannian unit normal, and by $\normale^\hh$ its projection onto $\hhh$, that is
\begin{equation*}
    \normale^\hh=\normale-\langle\normale,T\rangle T.
\end{equation*}
Being $S$ non-characteristic, then $\normale^\hh(p)\neq 0$ for any $p\in S$, so that the \emph{horizontal unit normal}
\begin{equation*}
    \vh=\frac{\normale^\hh}{|\normale^\hh|}
\end{equation*}
is well-defined on the whole $S$. Notice that the horizontal unit normal can be characterized to be the unique unitary horizontal vector field which is orthogonal to any horizontal tangent vector field. When $E$ is an $\hh$-Caccioppoli set in $\hh^n$ with boundary of class $C^1$, it is known (cf. e.g. \cite{MR2354992}) that $P_\hh(E,\cdot)=|\normale^\hh|\mathcal{H}^{2n}\llcorner \partial E$, being $\mathcal{H}^{2n}$ the standard $(2n)$-dimensional Hausdorff measure. Therefore, if $S$ is a two-sided hypersurface as above, in the following we adopt the notation
\begin{equation*}
    \sigma_\hhh=|\normale^\hh|\mathcal{H}^{2n}\llcorner S
\end{equation*}
to denote the relevant sub-Riemannian hypersurface measure as introduced e.g. in \cite{MR2354992, MR2262196}.
Let $d^S$ be the signed Carnot-Carathéodory distance from $S$. When $S$ is embedded, $d^S$ is smooth and satisfies the eikonal equation $|\nabla^\hhh d^S|=1$ in a neighborhood of $S$ (cf. \cite{MR4193432}). In this case, we assume that $\vh$ is defined in a neighborhood of $S$ by
\begin{equation}\label{normcondist}
    \vh=\nabla^\hhh d^S.
\end{equation}
When $\vh$ is locally extended as in \eqref{normcondist}, it follows that
\begin{equation}\label{propvh2}
    Z_k(\vh_h)=Z_h(\vh_k)
\end{equation}
for any $h,k=1,\ldots,2n$ such that $|h-k|\neq n$, and
\begin{equation}\label{propvh3}
    X_k(\vh_{n+k})=Y_k(\vh_k)-2\alpha\qquad\text{and}\qquad Y_k(\vh_k)=X_k(\vh_{n+k})+2\alpha
\end{equation}
for any $k=1,\ldots,n$, where here and in the following, according to \cite{MR3385193,MR3794892}, we adopt the notation $\alpha=Td^S$.
In particular, an easy computation (cf. \cite{ruled}) reveals that
\begin{equation}\label{propvh5}
    \nabla_{\vh}\vh=-2\alpha J(\vh),
\end{equation}
 Moreover, the Riemannian normal $\normale$ can be locally extended by letting 
\begin{equation}\label{riemanniannormal}
    N=\frac{1}{\sqrt{1+\alpha^2}}\vh+\frac{\alpha}{\sqrt{1+\alpha^2}}T.
\end{equation}
Let us provide a more precise description of the tangent space to $S$. First, \eqref{jeskew} implies that $J(\vh)\in\Gamma(\hhh T S)$.
Moreover, denoting by $\hhh' TS$ the distribution defined by
\begin{equation*}
    \hhh'T_p S=\hhh T_pS\cap J\left(\hhh T_p S\right)
\end{equation*}
for any $p\in S$, it is easy to check that it is a $(2n-2)$-dimensional sub-bundle of $\hhh TS$, and that the latter can be orthogonally decomposed as 
\begin{equation*}
    \hhh TS=\hhh' TS\oplus\spann J(\vh).
\end{equation*}
 Finally, \eqref{riemanniannormal} implies that the vector field $\s$ defined by 
\begin{equation*}
            \s=T-\alpha \vh
        \end{equation*}
belongs to $\Gamma(TS)$ and satisfies $\langle\s,X\rangle=0$ for any $X\in\Gamma(\hhh TS)$. Therefore, the tangent space to $S$ admits the orthogonal decomposition
\begin{equation*}
    TS=\hhh'TS\oplus\spann J(\vh)\oplus\spann \s.
\end{equation*}
 In the following, we denote by $\pi:\Gamma(\hhh)\longrightarrow\Gamma(\hhh TS)$ the projection map
 \begin{equation}\label{proiezionepersimons}
     \pi(X)=X-\langle X,\vh\rangle \vh=\sum_{i=1}^{2n-1}\langle X,E_i\rangle E_i
 \end{equation}
for any $X\in\Gamma(\hhh)$ and any local orthonormal frame $\En_1,\ldots,\En_{2n-1}$ of $\hhh TS$. In particular,
  \begin{equation}\label{pijeixinhprmo}
         \pi(J(X))\in\Gamma(\hhh' TS)  \qquad\text{and}\qquad J(\pi(J(X)))=-X+\langle X,J(\vh)\rangle J(\vh).
     \end{equation}
    Let $X\in\Gamma(\hhh TS)$. By \eqref{pseudotorsion}, 
\begin{equation}\label{inhsiiiiiiiii}
        [J(\vh),X]=\nabla_{J(\vh)}X-\nabla_XJ(\vh)\in\Gamma(\hhh TS).
    \end{equation}
  In addition,    \begin{equation}\label{commutatoreesecondaforma}
        \langle [J(\vh),X],X\rangle= \langle\nabla _X\vh,\pi(J(X))\rangle.
    \end{equation}
 The horizontal unit normal $\vh$ evolves along $\s$ as follows.
    \begin{proposition}
        Let $S$ be a smooth, immersed, non-characteristic hypersurface without boundary. Then 
        \begin{equation}\label{nablaessenu}
            \nabla_\s\vh=\nabla^\hhh\alpha+2\alpha^2J(\vh).
        \end{equation}
        In particular, if $X\in\Gamma(\hhh TS)$, then
        \begin{equation}\label{accaesse}
            \langle \nabla_{\s}\vh,X\rangle=X\alpha+2\alpha^2\langle J(\vh),X\rangle.
        \end{equation}
    \end{proposition}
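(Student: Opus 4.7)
The plan is to decompose $\s = T - \alpha\vh$ and expand
\begin{equation*}
\nabla_\s \vh = \nabla_T \vh - \alpha\,\nabla_\vh \vh.
\end{equation*}
The second summand is already known: by \eqref{propvh5}, $\nabla_\vh\vh = -2\alpha J(\vh)$, so $-\alpha\nabla_\vh\vh = 2\alpha^2 J(\vh)$. Thus it remains to identify $\nabla_T \vh$ with $\nabla^\hhh \alpha$. Since the statement is local, I work in a neighborhood where $S$ is embedded and extend $\vh$ by \eqref{normcondist}, so that $\vh = \nabla^\hhh d^S$, i.e.\ the components $\vh_k := \langle \vh, Z_k\rangle = Z_k d^S$ for $k=1,\dots,2n$.

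The key computation combines two structural facts. First, the pseudohermitian connection is flat on the left-invariant frame, so by \eqref{phflat},
\begin{equation*}
\nabla_T \vh = \sum_{k=1}^{2n}(T\vh_k)\,Z_k = \sum_{k=1}^{2n} T(Z_k d^S)\,Z_k.
\end{equation*}
Second, the defining coefficients of $X_j$ and $Y_j$ do not depend on $t$, so $[T, Z_k] = 0$ for every $k=1,\dots,2n$. This permits the swap $T(Z_k d^S) = Z_k(Td^S) = Z_k\alpha$, and therefore
\begin{equation*}
\nabla_T \vh = \sum_{k=1}^{2n}(Z_k\alpha)\,Z_k = \nabla^\hhh\alpha,
\end{equation*}
which, combined with the first step, yields \eqref{nablaessenu}.

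For the second identity, take $X \in \Gamma(\hhh TS)$ and pair both sides of \eqref{nablaessenu} with $X$. Writing $X = \sum_{k=1}^{2n}\langle X,Z_k\rangle Z_k$ (since $X$ is horizontal), one immediately sees $\langle\nabla^\hhh\alpha, X\rangle = \sum_{k=1}^{2n}\langle X,Z_k\rangle Z_k\alpha = X\alpha$, so \eqref{accaesse} follows.

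There is no real obstacle here beyond bookkeeping: the computation hinges entirely on the flatness \eqref{phflat} of the pseudohermitian connection in the left-invariant frame together with the vanishing of $[T, Z_k]$ for horizontal $Z_k$. The only delicate point is justifying the use of the extension \eqref{normcondist}, which requires working locally and appealing to embeddedness, but as all terms in \eqref{nablaessenu} depend only on the behavior of $\vh$ along $\s$ through first-order derivatives at $S$, localization is harmless.
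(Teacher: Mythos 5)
Your proof is correct and follows essentially the same route as the paper: localize to the embedded case, extend $\vh$ via \eqref{normcondist}, split $\nabla_\s\vh=\nabla_T\vh-\alpha\nabla_\vh\vh$, handle the second term by \eqref{propvh5}, and identify $\nabla_T\vh$ with $\nabla^\hhh\alpha$ through \eqref{phflat} and the commutation $[T,Z_k]=0$. The paper's proof is just a condensed version of exactly this computation, so there is nothing to add.
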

    \begin{proof}
      We assume without loss of generality that $S$ is embedded. Let $\vh$ be extended as in \eqref{normcondist}. Then, recalling \eqref{propvh5},
      \begin{equation*}
              \nabla_\s\vh=\nabla_T\vh-\alpha\nabla_{\vh}\vh=\sum_{j=1}^{2n}T(Z_j d)Z_j+2\alpha^2 J(\vh)=\sum_{j=1}^{2n}Z_j\alpha Z_j+2\alpha^2 J(\vh)=\nabla^\hhh\alpha+2\alpha^2J(\vh).
      \end{equation*}
       Finally, \eqref{accaesse} easily follows.
    \end{proof}
\subsection{Tangent pseudohermitian connection}\label{sectanpseucon}
    The \emph{tangent pseudohermitian connection}, denoted by $\nabla^S:\Gamma(TS)\times\Gamma(\hhh TS)\longrightarrow\Gamma(\hhh TS)$, is defined by
\begin{equation*}
    \nabla^S _X Y=\nabla_XY-\langle\nabla_XY,\vh\rangle\vh
\end{equation*}
for any $X\in\Gamma(TS)$ and any $Y\in\Gamma(\hhh TS)$. An easy computation reveals that $\nabla^S$ is a well-defined affine connection, and that it is metric in the sense that   \begin{equation}\label{metricequationnablas}
         X\langle Y,Z\rangle=\langle\nabla^S_XY,Z\rangle+\langle Y,\nabla^S_XZ\rangle
     \end{equation}
     for any $X\in\Gamma(TS)$ and any $Y,Z\in\Gamma(\hhh TS)$.
 Accordingly, the torsion tensor $\tor_{\nabla^S}(X,Y):\Gamma(\hhh TS)\times\Gamma(\hhh TS)\longrightarrow \Gamma(TS)$ is defined by
\begin{equation*}
     \tors (X,Y)=\nabla^S_XY-\nabla^S_YX-[X,Y].
\end{equation*}
We stress that we are not requiring $ \tors (X,Y)$ to be horizontal, so that, by Frobenious theorem, $\tors$ is well-defined. The latter admits the following explicit expression.
    \begin{proposition}\label{torsions}
    Let $X,Y\in\Gamma(\hhh TS)$. Then
    \begin{equation}\label{torsionofnablas}
    \tors (X,Y)=2\langle J(X),Y\rangle \s.
\end{equation}
 \end{proposition}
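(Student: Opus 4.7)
The plan is to expand the torsion directly from its definition and compare it with the ambient torsion \eqref{pseudotorsion}. By definition, for $X,Y\in\Gamma(\hhh TS)$,
\[
\tors(X,Y)=\nabla_XY-\nabla_YX-[X,Y]-\bigl\langle\nabla_XY-\nabla_YX,\vh\bigr\rangle\vh,
\]
and applying \eqref{pseudotorsion} to the first three terms yields
\[
\tors(X,Y)=2\langle J(X),Y\rangle T-\bigl\langle\nabla_XY-\nabla_YX,\vh\bigr\rangle\vh.
\]
Since $\s=T-\alpha\vh$, this rewrites as
\[
\tors(X,Y)=2\langle J(X),Y\rangle\s+\Bigl(2\alpha\langle J(X),Y\rangle-\bigl\langle\nabla_XY-\nabla_YX,\vh\bigr\rangle\Bigr)\vh,
\]
so I only need to verify that the coefficient of $\vh$ vanishes.

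For this, first I use the metric property of $\nabla$ together with $\langle X,\vh\rangle=\langle Y,\vh\rangle=0$ to get
\[
\langle\nabla_XY,\vh\rangle=-\langle Y,\nabla_X\vh\rangle,\qquad\langle\nabla_YX,\vh\rangle=-\langle X,\nabla_Y\vh\rangle,
\]
hence $\langle\nabla_XY-\nabla_YX,\vh\rangle$ equals the antisymmetric part of the shape operator evaluated at $(X,Y)$. To identify this quantity with $2\alpha\langle J(X),Y\rangle$, I will extend $\vh$ in a neighborhood of $S$ by \eqref{normcondist} and exploit $\nabla_{Z_i}Z_j=0$ from \eqref{phflat}, which gives $\nabla_X\vh=\sum_j(XZ_jd^S)Z_j$. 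Writing $X=\sum_iX^iZ_i$, $Y=\sum_jY^jZ_j$ in the left-invariant frame, the resulting expression is
\[
\langle\nabla_XY-\nabla_YX,\vh\rangle=\sum_{i,j}X^iY^j[Z_i,Z_j]d^S,
\]
and the only non-vanishing brackets $[X_k,Y_k]=-2T$ produce precisely $-2\alpha\sum_k(X^kY^{n+k}-X^{n+k}Y^k)$, which is $-2\alpha\langle X,J(Y)\rangle=2\alpha\langle J(X),Y\rangle$ by the skew-symmetry of $J$ in \eqref{jeskew}.

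Substituting back cancels the $\vh$-contribution and leaves $\tors(X,Y)=2\langle J(X),Y\rangle\s$, which is what we wanted. The only mildly delicate point will be the coordinate computation in the second paragraph: I must be careful with the sign convention relating the components of $X$ and $J(X)$ in the $X_1,\ldots,X_n,Y_1,\ldots,Y_n$ frame so that the factor coming from $[X_k,Y_k]=-2T$ combines with $\alpha=Td^S$ into the correct expression $2\alpha\langle J(X),Y\rangle$. Since we only need a local identity and the statement is tensorial in $X,Y$, there is no loss of generality in assuming $S$ embedded so that \eqref{normcondist} applies.
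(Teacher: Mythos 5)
Your argument is correct and follows essentially the same route as the paper's proof: both reduce \eqref{torsionofnablas} to showing that the $\vh$-component of $\nabla_XY-\nabla_YX-[X,Y]$ recombines $T$ into $\s$, i.e.\ that $\langle\nabla_XY-\nabla_YX,\vh\rangle=2\alpha\langle J(X),Y\rangle$, using \eqref{pseudotorsion}, the extension \eqref{normcondist}, the flatness \eqref{phflat} and the commutators $[X_k,Y_k]=-2T$ (which the paper packages as \eqref{propvh2}--\eqref{propvh3}). One cosmetic caveat: the displayed identity should read $\langle\nabla_XY-\nabla_YX,\vh\rangle=-\sum_{i,j}X^iY^j[Z_i,Z_j]d^S$, and in components $\sum_k\left(X^kY^{n+k}-X^{n+k}Y^k\right)=\langle J(X),Y\rangle=-\langle X,J(Y)\rangle$, so your two sign slips cancel and the final formula is unaffected.
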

\begin{proof}
    Let $X,Y\in\Gamma(\hhh TS)$. 
      If $X=\sum_{j=1}^{2n}X^jZ_j$ and $Y=\sum_{j=1}^{2n}Y^jZ_j$, then
\begin{equation*}
    \begin{split}
       -\langle[X,Y],\vh\rangle
       &\overset{\eqref{pseudotorsion}}{=}2\langle J(X),Y\rangle\langle\vh, T\rangle+\langle\nabla _X\vh,Y\rangle-\langle\nabla _Y\vh,X\rangle\\
       &\overset{\eqref{phflat}}{=}\sum_{i,j=1}^{2n}X^iY^j(Z_i\vh_j-Z_j\vh_i)\\
       &\overset{\eqref{propvh2},\eqref{propvh3}}{=}-2\alpha \sum_{i=1}^nX^iY^{n+i}+2\alpha \sum_{i=1}^nX^{n+i}Y^i\\
       &=-2\alpha \langle J(X),Y)\rangle.
    \end{split}
\end{equation*}    
    In particular, $
            \tors (X,Y)=\torn (X,Y)-\langle\torn(X,Y),\vh\rangle\vh-\langle[X,Y],\vh\rangle\vh=2\langle J(X),Y\rangle \s$.
\end{proof}

If $f\in C^\infty(S)$, we denote by 
\begin{equation*}
    \nabla ^{\hhh,S}f=\sum_{j=1}^{2n-1}\left(\mathrm{E}_jf\right)\mathrm{E}_j
\end{equation*}
the horizontal tangential gradient associated with the connection $\nabla^S$, where $\mathrm{E}_1,\ldots,\mathrm{E}_{2n-1}$ is any local orthonormal frame of $\hhh TS$. More generally, if $p\in\mathbb N$ and $T$ is a horizontal $(p,0)$-tensor field (cf. \cite{MR1138207}), meaning that $T:\Gamma(\hhh TS)^p\longrightarrow C^\infty(S)$ is a $C^\infty(S)$-multilinear map, the $(p+1,0)$ tensor field $\nabla^S T:\Gamma(TS)\times \Gamma(\hhh TS)^p\longrightarrow C^\infty(S)$ is defined by
\begin{equation*}
  \nabla^S_X T(X_1,\ldots,X_p)=X\left( T(X_1,\ldots,X_p)\right)-T\left(\nabla^{S}_XX_1,\ldots,X_p\right)-\ldots-T\left(X_1,\ldots,\nabla^{S}_X X_p\right)
\end{equation*}
for any $X\in\Gamma(TS)$ and any $X_1,\ldots,X_p\in\Gamma(\hhh TS)$. According to the above notation, we denote by $\nabla^{\hhh,S}T$ the restriction of $\nabla^ST$ to $\Gamma(\hhh TS)^{p+1}$. As a general fact, $\nabla^S$ verifies the Leibniz-type rule
      \begin{equation}\label{leibnizpertensori}
          \nabla^S_X(TU)=U\nabla^S_XT+T\nabla^S_XU
      \end{equation}
for any $X\in\Gamma(TS)$ and any couple of tensor fields $T,U$. If  $T,U$ are as above, we set 
\begin{equation*}
    \langle T,U\rangle =\sum_{i_1,\ldots,i_p=1}^{2n-1}T\left(\En_{i_1},\ldots,\En_{i_p}\right)U\left(\En_{i_1},\ldots,\En_{i_p}\right)\qquad\text{and}\qquad|T|^2=\sum_{i_1,\ldots,i_p=1}^{2n-1}T\left(\En_{i_1},\ldots,\En_{i_p}\right)^2
\end{equation*}
for any local orthonormal frame $\mathrm{E}_1,\ldots,\mathrm{E}_{2n-1}$ of $\hhh TS$.
The \emph{horizontal tangential Hessian} of $T$ is the $(p+2,0)$-tensor field $\hess^{\hhh,S}T:\Gamma(\hhh TS)^{p+2}\longrightarrow C^\infty(S)$ defined by
\begin{equation*}
    \hess^{\hhh,S} T(X,Y,X_1,\ldots,X_p)=\nabla^S_X\nabla^S_Y T (X_1,\ldots,X_p)
\end{equation*}
for any $X,Y,X_1,\ldots,X_p\in\Gamma(\hhh TS)$. Finally, the \emph{horizontal tangential Laplacian} of $T$ is the $(p,0)$-tensor field $\Delta^{\hhh,S}T:\Gamma(\hhh TS)^{p}\longrightarrow C^\infty(S)$ defined by
\begin{equation*}
    \Delta^{\hhh,S} T(X_1,\ldots,X_p)=\trace \hess^{\hhh,S}T(\cdot,\cdot,X_1,\ldots,X_p)=\sum_{j=1}^{2n-1}\hess^{\hhh,S}T(\mathrm{E}_j,\mathrm{E}_j,X_1,\ldots,X_p)
\end{equation*}
for any $X_1,\ldots,X_p\in\Gamma(\hhh TS)$ and any local orthonormal frame $\mathrm{E}_1,\ldots,\mathrm{E}_{2n-1}$ of $\hhh TS$. We denote by $R^S:\Gamma(\hhh TS)\times\Gamma(\hhh TS)\times\Gamma(\hhh TS)\longrightarrow \Gamma(\hhh TS)$ the curvature tensor associated with $\nabla^S$, that is
 \begin{equation*}
     R^{S}(X,Y)Z=\nabla^S_X \nabla_Y^S Z-\nabla^S_Y\nabla^S_XZ-\nabla^S_{[X,Y]}Z
 \end{equation*}
 for any $X,Y,Z\in\Gamma(\hhh TS)$,
 and with some abuse of notation we let
  \begin{equation*}
     R^S(X,Y,Z,W)=\langle\nabla^S_X \nabla_Y^S Z-\nabla^S_Y\nabla^S_XZ-\nabla^S_{[X,Y]}Z,W\rangle
 \end{equation*}
 for any $X,Y,Z,W\in\Gamma(\hhh TS)$.
The horizontal Hessian is affected by $R^S$ as follows.
  \begin{proposition}
      Let $T:\Gamma(\hhh TS)\times\Gamma(\hhh TS)\longrightarrow C^\infty(S)$ be a $(2,0)$-tensor field. 
     Then
     \begin{equation}\label{commutatensoreconriemannpersimons}
     \begin{split}
                 \hess^{\hhh,S} T(Y,X,Z,W)&=\hess^{\hhh,S} T(X,Y,Z,W)\\
                 &\quad+T(R^S(X,Y)Z,W)+T(Z,R^S(X,Y)W)\\
                 &\quad+2\langle J(X),Y\rangle\left(\nabla^S_\s T\right)(Z,W)
     \end{split}
     \end{equation}
     for any $X,Y,Z,W\in\Gamma(\hhh TS)$.
 \end{proposition}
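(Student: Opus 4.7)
The identity \eqref{commutatensoreconriemannpersimons} is the Ricci commutation formula for a $(2,0)$-tensor under $\nabla^S$, adapted to the fact that $\nabla^S$ is metric but not torsion-free. My plan is to derive it from two ingredients: the general curvature-commutator identity $\nabla^S_X \nabla^S_Y - \nabla^S_Y \nabla^S_X - \nabla^S_{[X,Y]}$ acts as a derivation computed by $R^S(X,Y)$, and the explicit torsion expression $\tors(X,Y)=2\langle J(X),Y\rangle\s$ furnished by \Cref{torsions}.

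First, I would rewrite the definition of the Hessian as the familiar ``second covariant derivative'' shift
\[
\hess^{\hhh,S}T(X,Y,Z,W)=\bigl(\nabla^S_X(\nabla^S_Y T)\bigr)(Z,W)-\bigl(\nabla^S_{\nabla^S_X Y}T\bigr)(Z,W),
\]
which follows directly from the definition of $\nabla^S$ on the $(3,0)$-tensor $\nabla^S T$ together with the Leibniz rule \eqref{leibnizpertensori} applied to $\nabla^S T(Y,Z,W)=(\nabla^S_Y T)(Z,W)$. The $Y\leftrightarrow X$ version is symmetric.

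Next, I would establish the derivation identity
\[
\bigl(\nabla^S_X \nabla^S_Y T-\nabla^S_Y\nabla^S_X T -\nabla^S_{[X,Y]}T\bigr)(Z,W)=-T(R^S(X,Y)Z,W)-T(Z,R^S(X,Y)W).
\]
This is a standard algebraic computation: the operator on the left is a tensor derivation on $T$ whose action on vector fields is $R^S(X,Y)$, so on the covariant $(2,0)$-tensor $T$ it produces the two displayed terms with a minus sign. One checks this by expanding $(\nabla^S_X(\nabla^S_Y T))(Z,W)$ and $(\nabla^S_Y(\nabla^S_X T))(Z,W)$ via the tensor Leibniz rule, subtracting, and grouping all terms involving $\nabla^S_X\nabla^S_Y Z-\nabla^S_Y\nabla^S_X Z-\nabla^S_{[X,Y]}Z=R^S(X,Y)Z$ and analogously for $W$.

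Taking the difference of the two Hessian expressions and using $\nabla^S_X Y-\nabla^S_Y X=[X,Y]+\tors(X,Y)$ from the definition of the torsion, I get
\[
\hess^{\hhh,S}T(X,Y,Z,W)-\hess^{\hhh,S}T(Y,X,Z,W)=\bigl(\nabla^S_X\nabla^S_Y T-\nabla^S_Y\nabla^S_X T-\nabla^S_{[X,Y]}T\bigr)(Z,W)-(\nabla^S_{\tors(X,Y)}T)(Z,W).
\]
Substituting the derivation identity for the first bracket and $\tors(X,Y)=2\langle J(X),Y\rangle\s$ from \eqref{torsionofnablas} for the second, then moving everything except $\hess^{\hhh,S}T(Y,X,Z,W)$ to the right-hand side, yields exactly \eqref{commutatensoreconriemannpersimons}. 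The main conceptual point to keep track of, and the only real deviation from the torsion-free Riemannian template, is the extra $2\langle J(X),Y\rangle(\nabla^S_\s T)(Z,W)$ term, which arises precisely because $\tors(X,Y)$ is a nonzero multiple of the transverse vector $\s$; verifying that $\nabla^S_\s T$ is well-defined (it is, since $\s\in\Gamma(TS)$ and $\nabla^S$ was defined on $\Gamma(TS)\times\Gamma(\hhh TS)$) is what makes the formula sensible despite $\nabla^S$ being only partially defined.
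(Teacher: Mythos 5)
Your proposal is correct and takes essentially the same route as the paper: both reduce the commutation of $\hess^{\hhh,S}T$ to the two curvature terms plus a $\nabla^S_{\tors(X,Y)}T$ contribution, and then identify the latter as $2\langle J(X),Y\rangle\left(\nabla^S_\s T\right)(Z,W)$ via \eqref{torsionofnablas}. The only difference is organizational — the paper expands both Hessians in full and groups terms at the end, whereas you factor the computation through the Ricci-type derivation identity before inserting the torsion — but the content is identical.
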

 \begin{proof}
 Notice that 
     \begin{equation*}
     \begin{split}
         \hess^{\hhh,S} T(X,Y,Z,W)
         &=XY(T(Z,W))-X(T(\nabla^S_YZ,W))-X(T(Z,\nabla^S_Y W))\\
         &-\nabla^S_XY(T(Z,W))+T(\nabla^S_{\nabla^S_XY}Z,W)+T(Z,\nabla^S_{\nabla^S_XY}W)\\
         &-Y(T(\nabla^S_XZ,W))+T(\nabla^S_Y\nabla^S_XZ,W)+T(\nabla^S_XZ,\nabla^S_YW)\\
         &-Y(T(Z,\nabla^S_XW))+T(\nabla^S_YZ,\nabla^S_XW)+T(Z,\nabla^S_Y\nabla^S_XW)
     \end{split}
     \end{equation*}
     and, in the same way,
          \begin{equation*}
     \begin{split}
         \hess^{\hhh,S} T(Y,X,Z,W)       
         &=YX(T(Z,W))-Y(T(\nabla^S_XZ,W))-Y(T(Z,\nabla^S_X W))\\
         &-\nabla^S_YX(T(Z,W))+T(\nabla^S_{\nabla^S_YX}Z,W)+T(Z,\nabla^S_{\nabla^S_YX}W)\\
         &-X(T(\nabla^S_YZ,W))+T(\nabla^S_X\nabla^S_YZ,W)+T(\nabla^S_YZ,\nabla^S_XW)\\
         &
      -X(T(Z,\nabla^S_YW))+T(\nabla^S_XZ,\nabla^S_YW)+T(Z,\nabla^S_X\nabla^S_YW).
     \end{split}
     \end{equation*}
          Hence
          \begin{equation*}
              \begin{split}
                  \hess^{\hhh,S}& T(Y,X,Z,W)-\hess^{\hhh,S} T(X,Y,W,Z)=\tor_\nabla^S(X,Y)(T(Z,W))\\
                  &\quad+T(\nabla^S_X\nabla^S_YZ-\nabla^S_Y\nabla^S_XZ-\nabla^S_{\nabla^S_XY-\nabla^S_YX}Z,W)\\
                  &\quad+T(Z,\nabla^S_X\nabla^S_YW-\nabla^S_Y\nabla^S_XW-\nabla^S_{\nabla^S_XY-\nabla^S_YX}W)\\
                 &=\tor_{\nabla^S}(X,Y)(T(Z,W))-T(\nabla^S_{\tor_{\nabla^S}(X,Y)}Z,W)-T(Z,\nabla^S_{\tor_{\nabla^S}(X,Y)}W)\\
            &\quad+T(R^S(X,Y)Z,W)+T(Z,R^S(X,Y)W)\\
              &=\nabla^S_{\tors(X,Y)}T(Z,W)+T(R^S(X,Y)Z,W)+T(Z,R^S(X,Y)W)\\
              &\overset{\eqref{torsionofnablas}}{=}2\langle J(X),Y\rangle\nabla^S_{\s}T(Z,W)+T(R^S(X,Y)Z,W)+T(Z,R^S(X,Y)W).\\
              \end{split}
          \end{equation*}
 \end{proof}

\subsection{Second fundamental forms and mean curvature}
In the current literature, different types of second fundamental form are available in the sub-Riemannian Heisenberg group (cf. e.g. \cite{MR2401420,MR2354992,MR3385193,MR2898770,MR4193432}). The \emph{horizontal shape operator} $A:\Gamma(TS)\longrightarrow \Gamma(\hhh TS)$ and the \emph{symmetric horizontal shape operator} $\tilde A:\Gamma(\hhh T S)\longrightarrow\Gamma(\hhh TS)$ are defined respectively by
 \begin{equation*}
     A(X)=\nabla_X\vh\qquad\text{and}\qquad  \tilde A(X)=\nabla_X\vh+\alpha J'(X)
 \end{equation*}
 for any $X\in\Gamma(\hhh TS)$, where $J'= J\text{ on }\hhh'TS$ and $J'(J(\vh))=0$.
It is easy to check that $A$ and $\tilde A$ are well-defined. Accordingly, the \emph{horizontal second fundamental form} $h$ and the \emph{symmetric horizontal second fundamental form} $\tilde h$ are the horizontal $(2,0)$-tensor fields defined by
\begin{equation*}
    h(X,Y)=\langle A(X),Y\rangle\qquad\text{and}\qquad \tilde h(X,Y)=\langle \tilde A(X),Y\rangle
\end{equation*}
for any $X,Y\in\Gamma(\hhh TS)$.
As in the Riemannian setting, the horizontal second fundamental form $h$ relates the connections $\nabla$ and $\nabla^S$ by the identity
\begin{equation*}
    \nabla_XY
    =\nabla^S_XY-h(X,Y)\vh
\end{equation*}
for any $X,Y\in\Gamma(\hhh TS)$. It is well known that $\tilde h$ is symmetric, while, when $n\geq 2$, $h$ may not be symmetric (cf. e.g. \cite{MR2354992,MR2898770}). More precisely, $h$ and $\tilde h$ are related in the following way.
\begin{proposition}\label{commutareh}
    Let $X,Y\in \Gamma(\hhh T S)$. Then
    \begin{equation}\label{theh}
    \th (X,Y)=h(X,Y)+\alpha C(X,Y)=\frac{h(X,Y)+h(Y,X)}{2},
\end{equation}
 where $C:\Gamma(\hhh T S)\times\Gamma (\hhh TS)\longrightarrow C^\infty(S)$, which we will refer to as \emph{commutation tensor}, is the skew-symmetric horizontal $(2,0)$-tensor field defined by
    \begin{equation*}
        C(X,Y)=\langle J(X),Y\rangle.
    \end{equation*}
    In particular
    \begin{equation}\label{comecommutah}
        h(Y,X)=h(X,Y)+2\alpha C(X,Y).
    \end{equation}
    Finally,
    \begin{equation}\label{normatildeetildeh}
        |h|^2=|\tilde h|^2+2(n-1)\alpha^2.
    \end{equation}
\end{proposition}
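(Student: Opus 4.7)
The plan is to establish the three identities in sequence, extracting all the content from the definitions of $h$ and $\tilde h$, the pseudohermitian torsion \eqref{pseudotorsion}, and the torsion of $\ns$ computed in \Cref{torsions}. First, for $\tilde h(X,Y)=h(X,Y)+\alpha C(X,Y)$, I would expand the definition to get $\tilde h(X,Y)=h(X,Y)+\alpha\langle J'(X),Y\rangle$ and identify $\langle J'(X),Y\rangle$ with $C(X,Y)=\langle J(X),Y\rangle$. Using the orthogonal decomposition $\hhh TS=\hhh' TS\oplus\spann J(\vh)$, I would write $X=X'+aJ(\vh)$ with $X'\in\Gamma(\hhh' TS)$; then $J'(X)=J(X')$ by definition of $J'$, while $J(X)=J(X')-a\vh$ by \eqref{jeskew}. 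Pairing with $Y\in\Gamma(\hhh TS)$, which is orthogonal to $\vh$, kills the $-a\vh$ term, so $\langle J(X),Y\rangle=\langle J'(X),Y\rangle$, yielding the first equality in \eqref{theh}.

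The second step is to symmetrize, which I would derive from \eqref{comecommutah}. Since $\langle\vh,X\rangle=\langle\vh,Y\rangle=0$, metricity gives $h(X,Y)=-\langle\vh,\nabla_XY\rangle$, so
\[
h(X,Y)-h(Y,X)=-\langle\vh,\nabla_XY-\nabla_YX\rangle.
\]
The torsion \eqref{pseudotorsion} rewrites the right-hand side as $-\langle\vh,[X,Y]\rangle-2\langle J(X),Y\rangle\langle\vh,T\rangle=-\langle\vh,[X,Y]\rangle$, since $\vh\perp T$. To evaluate this bracket, I would invoke \Cref{torsions}: writing $[X,Y]=\ns_XY-\ns_YX-2\langle J(X),Y\rangle\s$, noting that $\ns_XY\in\Gamma(\hhh TS)$ is orthogonal to $\vh$ while $\langle\vh,\s\rangle=-\alpha$, I obtain $\langle\vh,[X,Y]\rangle=2\alpha C(X,Y)$. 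This produces \eqref{comecommutah}; averaging with $h(X,Y)$ and invoking the first step yields $\tfrac12(h(X,Y)+h(Y,X))=h(X,Y)+\alpha C(X,Y)=\tilde h(X,Y)$, completing \eqref{theh}.

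For the norm identity \eqref{normatildeetildeh}, I would view $\tilde h-h=\alpha C$ as an equality of $(2,0)$-tensors and expand $|h|^2=|\tilde h|^2-2\alpha\langle\tilde h,C\rangle+\alpha^2|C|^2$; the cross term vanishes since $\tilde h$ is symmetric while $C$ is skew-symmetric. To compute $|C|^2$ in a local orthonormal frame $\En_1,\ldots,\En_{2n-1}$ of $\hhh TS$, I would use the $J$-skew-symmetry of $\langle\cdot,\cdot\rangle$ on $\hhh$ (obtained from \eqref{jeskew} by polarization) together with the decomposition $J(\En_i)=\pi(J(\En_i))+\langle J(\En_i),\vh\rangle\vh$, giving $\sum_j\langle J(\En_i),\En_j\rangle^2=|\pi(J(\En_i))|^2=1-\langle\En_i,J(\vh)\rangle^2$. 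Summing over $i$ and using $J(\vh)\in\Gamma(\hhh TS)$ with $|J(\vh)|=|\vh|=1$ (again by $J$ being an isometry of $\hhh$) yields $|C|^2=(2n-1)-1=2(n-1)$, completing \eqref{normatildeetildeh}. The only genuinely delicate point in the argument is the asymmetry of $h$: unlike the Riemannian case, one must truly exploit that $[X,Y]$ for $X,Y\in\Gamma(\hhh TS)$ acquires a component along $\s$, whose pairing with $\vh$ is non-zero thanks to $\langle\vh,\s\rangle=-\alpha$ and produces the $2\alpha C(X,Y)$ correction.
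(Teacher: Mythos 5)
Your argument is correct, and it is genuinely more self-contained than the paper's, which is much shorter: there, the first equality in \eqref{theh} is taken directly from the definitions of $h$, $\tilde h$ and $J'$, the second equality and \eqref{comecommutah} are obtained by invoking the \emph{known} symmetry of $\th$ (cited from the literature) together with the skew-symmetry of $C$, and \eqref{normatildeetildeh} is not proved at all but quoted from Proposition 5.4 of \cite{ruled}. You instead (i) verify explicitly that $\langle J'(X),Y\rangle=\langle J(X),Y\rangle$ via the splitting $X=X'+aJ(\vh)$ and $J(J(\vh))=-\vh\perp Y$, a point the paper treats as definitional; (ii) derive \eqref{comecommutah} — and hence the symmetry of $\th$ as a by-product — from metricity, the torsion identity \eqref{pseudotorsion} and \Cref{torsions}, using $\langle\vh,\s\rangle=-\alpha$; this is legitimate and non-circular, since \Cref{torsions} is established earlier by a coordinate computation independent of the present proposition, and it makes transparent that the $2\alpha C$ correction is exactly the $\s$-component of $[X,Y]$ paired with $\vh$; and (iii) prove \eqref{normatildeetildeh} by expanding $|h|^2=|\th-\alpha C|^2$, killing the cross term by symmetry versus skew-symmetry, and computing $|C|^2=2(n-1)$ from the fact that $J$ is an isometry of $\hhh$ and $J(\vh)\in\Gamma(\hhh TS)$ is unitary. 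What the paper's route buys is brevity, at the cost of two external citations; what yours buys is a complete, internally verifiable proof of all three identities, and in particular an intrinsic explanation (via the torsion of $\nabla^S$) of why $h$ fails to be symmetric exactly by $2\alpha C$.
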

\begin{proof}
Let $X,Y\in \Gamma(\hhh T S)$. By definition of $h$ and $\tilde h$ we have that $\th (X,Y)=h(X,Y)+\alpha C(X,Y).$
Moreover, since $\tilde h$ is symmetric and $C$ is skew-symmetric in view of \eqref{jeskew}, then
\begin{equation*}
     \th (X,Y)=\th (Y,X)=h(Y,X)+\alpha C(Y,X)=h(Y,X)-\alpha C(X,Y),
\end{equation*}
whence \eqref{theh} and \eqref{comecommutah} follow. Finally, \eqref{normatildeetildeh} follows from \cite[Proposition 5.4]{ruled}.
\end{proof}

According to its Riemannian counterpart, the \emph{horizontal mean curvature} $H$ is then defined by
\begin{equation}\label{hvarieespressioni}
    H=\trace h=\trace \tilde h=\divv_\hhh\vh,
\end{equation}
the last identity following from \cite{MR2354992}. In the following, we say that $S$ is \emph{minimal} whenever $H\equiv 0$.
\begin{remark}
   Although we have defined the horizontal shape operator $A$ only for horizontal tangent vector fields, we emphasize that $A(\s)$ is nevertheless well-defined and admits an explicit expression by virtue of \eqref{nablaessenu}. Therefore, we shall use the notation $h(\s, X)$ for any given $X \in \Gamma(\hhh TS)$.
\end{remark}
\subsection{Eigenvectors}\label{eigenumbisectionnnnnn}
Fix $p\in S$. Since $\tilde h_p$ is symmetric, then it is diagonalizable. Therefore, in the following, we denote by $\E_1,\ldots, \E_{2n-1}$ any local orthonormal frame of $\hhh T S$ around $p$ such that
\begin{equation}\label{diagontildeh}
        \tilde h_p (\E_i|_p,\E_j|_p)=\lambda_i\delta_{i,j}
    \end{equation}
    for any $i,j=1,\ldots,2n-1$, where $\lambda_1,\ldots,\lambda_{2n-1}$ are the eigenvalues of $\tilde h_p$. When $S$ satisfies milder assumptions, more can be said about $\E_1,\ldots,\E_{2n-1}$. According to \cite{MR3385193,MR3794892}, in the following we adopt the notation $\hnn=h(J(\vh),J(\vh))$, and we let $\x=\nabla_{J(\vh)}\vh-\hnn J(\vh)$.
As we know from \cite{MR3794892}, if $p\in S$, then $\x|_p=0$ if and only if $\tilde A|_p(\hhh'T_p S)\subseteq \hhh'T_p S$. In particular, when
\begin{equation}\label{taiwansplitting}\tag{$\mathrm {P} 1$}
    \x\equiv 0,
\end{equation}
 $J(\vh)|_p$ is an eigenvector of $\tilde h_p$ for any $p\in S$. Therefore, when \eqref{taiwansplitting} holds and $\E_1,\ldots,\E_{2n-1}$ is as in \eqref{diagontildeh}, we may always assume that
\begin{equation}\label{diagotildehtaiwan}
    \E_1,\ldots,\E_{n-1},\ldots,\E_{n+1},\ldots,\E_{2n-1}\in\Gamma(\hhh 'TS)\qquad\text{and}\qquad\E_n=J(\vh).
\end{equation}
A relevant class of hypersurfaces which satisfy \eqref{taiwansplitting} is that of \emph{horizontally umbilic hypersurfaces} introduced in \cite{MR3794892}. 
In the context of non-characteristic minimal hypersurfaces, \eqref{taiwansplitting} holds, e.g., for the following relevant examples.
\begin{example}[Vertical hyperplanes]\label{newnewverticalhyperex}
    Vertical hyperplanes trivially satisfy \eqref{taiwansplitting}, as $\nabla_{J(\vh)}\vh\equiv 0$.
\end{example}
\begin{example}[Catenoidal hypersurfaces, $\mathrm{I}$]\label{newnewexamplecatenoid}
    A non-trivial example (cf. \cite{MR2271950}) is provided by the class of hypersurfaces of revolution around the vertical axis, say $(S_E)_{E>0}$, whose profile is given by the vertical symmetrization of the curve $(s,t_E(s))$, where 
    $$t_E:\left[E^{\frac{1}{2n-1}},\infty\right]\longrightarrow [0,\infty],\qquad t_E(s)=\int_{E^{\frac{1}{2n-1}}}^s\frac{E\tau}{\sqrt{\tau^{4n-2}-E^2}}\,d\tau.$$
    Each $S_E$ is complete and minimal. Moreover, it is non-characteristic, as it does not intersect the vertical axis (cf. \cite{MR2271950}). Since hypersurfaces of revolution are horizontally umbilic (cf. \cite{MR3794892}), $S_E$ satisfies \eqref{taiwansplitting}. In this case, differently from \Cref{newnewverticalhyperex}, a tedious computation shows that
    \begin{equation}\label{newnewellpercatenoid}
        \nabla_{J(\vh)}\vh=-2E|z|^{-4}J(\vh),
    \end{equation}
    where we have set $|z|^2=x_1^2+\cdots+x_n^2+y_1^2+\cdots+y_n^2.$
\end{example}
On the other hand, we have the following important example in the class of minimal, characteristic hypersurfaces.
\begin{example}[Hyperbolic paraboloids, $\mathrm{I}$]\label{newnewhyperparauno}
    Let $S\subseteq\hh^n$ be the vertical graph of the function $$u(x_1,\ldots,x_n,,y_1,\ldots,y_n)=x_1y_1+x_2y_2+\cdots+x_ny_n.$$
      $S$ is a complete, minimal hypersurface with $S_0=\{(z,t)\in\hh^n\,:\,x_1=\cdots=x_n=0\}$. A direct computation reveals that $\nabla_{J(\vh)}\vh\equiv 0$ on $S\setminus S_0$, whence $S$ verifies \eqref{taiwansplitting}. We stress that $S$ is not horizontally umbilic, since its characteristic points are not isolated (cf. \cite[Proposition 4.1]{MR3794892}). 
   
\end{example}
\subsection{Tangential Laplace-Beltrami operators}
The general approach described in \Cref{sectanpseucon} allows to associate to a function $f\in C^\infty(S)$ a natural notion of tangential Laplacian, namely
\begin{equation}\label{hortanlapconcon}
    \Delta^{\hhh,S}f=\trace\hess^{\hhh, S}f.
\end{equation}
On the other hand, the authors  of \cite{MR2354992} considered a Laplace-Beltrami operator on $S$ of the form
\begin{equation}\label{hortanlapl}
\sum_{i=1}^{2n}\nabla^{\hh,S}_i\nabla^{\hh,S}_i f,
\end{equation}
where the \emph{horizontal tangential derivatives} $
    \nabla^{\hh,S}_if=Z_i f-\langle\nabla^\hhh f,\vh\rangle\vh_i$
for any $i=1,\ldots,2n$ do not depend on the smooth extension of $f$ (cf. \cite{MR2354992}).
As pointed out in \cite{MR2354992}, the operator defined in \eqref{hortanlapl}, differently from the Riemannian framework, is not in general self-adjoint. To this aim, the authors of \cite{MR2354992} introduced a \emph{modified} version of \eqref{hortanlapl}, the so-called \emph{modified horizontal tangential Laplacian}
\begin{equation}\label{modohoritanlaplaperintro}
    \hat{\Delta}^{\hhh,S} f=\sum_{i=1}^{2n}\nabla^{\hh,S}_i\nabla^{\hh,S}_i f+2\alpha\left\langle \nabla^\hhh f,J(\vh)\right\rangle.
\end{equation}
The most relevant feature of $\hat{\Delta}^{\hhh,S} $ is that it is indeed self-adjoint (cf. \cite[Corollary 11.4]{MR2354992}), so that the following integration-by-parts formula holds. 
\begin{proposition}
  Let $\varphi\in C^1_c(S)$ and $\psi\in C^2(S)$. Then
\begin{equation}\label{ibpformula}
        \int_S\varphi\,\hat\Delta^{\hhh,S}\psi\,d \sigma_\hhh=-\int_S\langle \nabla^{\hhh,S}\varphi,\nabla^{\hhh, S}\psi\rangle\,d \sigma_\hhh
    \end{equation}
\end{proposition}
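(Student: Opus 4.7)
The plan is to deduce the integration-by-parts formula directly from the self-adjointness of $\hat\Delta^{\hhh,S}$ established in \cite[Corollary 11.4]{MR2354992}. The simplest route is to use a divergence-type reformulation of that self-adjointness: for any horizontal vector field $V=\sum_{i=1}^{2n}V_i Z_i$ whose restriction to $S$ has compact support,
\[
\int_S\Bigl(\sum_{i=1}^{2n}\nabla^{\hh,S}_i V_i + 2\alpha\langle V,J(\vh)\rangle\Bigr)\,d\sigma_\hhh = 0.
\]
This is the assertion that the constant $1$ lies in the kernel of the formal adjoint of $\hat\Delta^{\hhh,S}$, and the extra term $2\alpha\langle V,J(\vh)\rangle$ is precisely the correction encoded in \eqref{modohoritanlaplaperintro}.

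First, I would choose $V:=\varphi\,\nabla^{\hhh,S}\psi$, which is a $C^1$ section of $\hhh TS\subseteq\Gamma(\hhh)$ compactly supported on $S$. Since each $\nabla^{\hh,S}_i$ is a first-order derivation (being the difference between $Z_i$ and a scalar multiple of $\vh_i$), a Leibniz computation gives
\[
\sum_{i=1}^{2n}\nabla^{\hh,S}_i V_i = \sum_{i=1}^{2n}(\nabla^{\hh,S}_i\varphi)(\nabla^{\hh,S}_i\psi) + \varphi\sum_{i=1}^{2n}\nabla^{\hh,S}_i\nabla^{\hh,S}_i\psi = \langle\nabla^{\hhh,S}\varphi,\nabla^{\hhh,S}\psi\rangle + \varphi\sum_{i=1}^{2n}\nabla^{\hh,S}_i\nabla^{\hh,S}_i\psi,
\]
where the identification of the summed first-order product with the $\hhh TS$-inner product uses that the projection of $\nabla^\hhh f$ orthogonal to $\vh$ coincides with $\nabla^{\hhh,S}f$. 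Since $J(\vh)\perp\vh$, we also have $\langle V,J(\vh)\rangle=\varphi\langle\nabla^\hhh\psi,J(\vh)\rangle$. Substituting these into the divergence identity above and rearranging to reconstruct $\hat\Delta^{\hhh,S}\psi$ yields exactly \eqref{ibpformula}.

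The remaining step is a minor regularity reduction: the computation requires $\varphi\in C^2_c(S)$ for $V$ to be $C^1$, while the statement assumes only $\varphi\in C^1_c(S)$. A standard mollification on $S$ produces $\varphi_k\in C^2_c(S)$ with $\varphi_k\to\varphi$ in $C^1(S)$, and both sides of \eqref{ibpformula} are continuous with respect to $\varphi$ in the $C^1$-topology, so passing to the limit completes the argument. The main potential obstacle is that \cite[Corollary 11.4]{MR2354992} may phrase self-adjointness only in the symmetric-pairing form $\int_S u\,\hat\Delta^{\hhh,S}v\,d\sigma_\hhh = \int_S v\,\hat\Delta^{\hhh,S}u\,d\sigma_\hhh$. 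In that case I would instead establish the product rule $\hat\Delta^{\hhh,S}(\varphi\psi) = \varphi\hat\Delta^{\hhh,S}\psi + \psi\hat\Delta^{\hhh,S}\varphi + 2\langle\nabla^{\hhh,S}\varphi,\nabla^{\hhh,S}\psi\rangle$ (a direct consequence of the Leibniz property of $\nabla^{\hh,S}_i$), integrate it, invoke the symmetric pairing to equate $\int_S\varphi\hat\Delta^{\hhh,S}\psi$ with $\int_S\psi\hat\Delta^{\hhh,S}\varphi$, and conclude using $\int_S\hat\Delta^{\hhh,S}(\varphi\psi)\,d\sigma_\hhh = 0$, which follows by pairing $\varphi\psi$ with the constant $1$.
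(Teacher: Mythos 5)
Your argument is correct and rests on the same ingredient as the paper, which offers no proof here beyond invoking the self-adjointness of $\hat\Delta^{\hhh,S}$ from \cite[Corollary 11.4]{MR2354992}: your divergence identity is precisely a reformulation of that fact, and the Leibniz computation with $V=\varphi\,\nabla^{\hhh,S}\psi$ is a routine unwinding of it. Two small remarks: the auxiliary identity as you state it fails for horizontal fields with a component along $\vh$ (applied to $V=f\vh$ it would force $\int_S fH\,d\sigma_\hhh=0$), so it must be formulated for compactly supported sections of $\hhh TS$ --- which is exactly how you use it --- and the final mollification step is unnecessary, since $\varphi\in C^1_c(S)$ and $\psi\in C^2(S)$ already make $V$ of class $C^1$.
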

In order to exploit \eqref{ibpformula}, in the next proposition we show that \eqref{hortanlapconcon} and
 \eqref{hortanlapl} agree.
 \begin{proposition}\label{lapbelt}
Let $f\in C^2(S)$. Set  $g_\hh^{i,j}=\delta_{i,j}-\vh_i\vh_j$ for any $i,j=1,\ldots,2n$. Then
    \begin{equation*}
        \Delta^{\hhh,S} f=\sum_{i=1}^{2n}\nabla^{\hh,S}_i\nabla^{\hh,S}_i f=\sum_{i,j=1}^{2n}g_\hh^{i,j}Z_iZ_j f-H\langle\nabla^\hhh f,\vh\rangle,
    \end{equation*}
\end{proposition}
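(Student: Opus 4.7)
The plan is to prove the two equalities in order, starting with the rightmost one (a direct coordinate computation), and then deducing the leftmost one by identifying the trace of $\hess^{\hhh,S}f$ with the divergence of the horizontal tangential gradient.

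For the second equality, writing $\vh(\cdot):=\sum_k\vh_k Z_k(\cdot) = \langle\nabla^\hhh\cdot,\vh\rangle$ as a first-order differential operator and setting $L_i := Z_i - \vh_i\,\vh$, the definition of $\nabla^{\hh,S}_i$ recalled before the statement gives $\nabla^{\hh,S}_i f = L_i f$. I would expand $L_i L_i f$ via the Leibniz rule and sum over $i=1,\ldots,2n$. Two algebraic identities drive the simplification: since $|\vh|^2\equiv 1$, one has $\sum_i \vh_i Z_j\vh_i = 0$ for every $j$, which kills the cross-terms of the form $\sum_i \vh_i \vh(\vh_i)\,\vh(f)$; and $\sum_i Z_i\vh_i = \divv_\hhh\vh = H$ by \eqref{hvarieespressioni} produces the term $-H\langle\nabla^\hhh f,\vh\rangle$. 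After expanding $\vh(\vh(f))$ to rearrange, the remaining second-order contributions collapse into $\sum_{i,j}\bigl(\delta_{i,j} - \vh_i\vh_j\bigr) Z_i Z_j f = \sum_{i,j} g_\hh^{i,j} Z_i Z_j f$; the symmetry of $\vh_i\vh_j$ in $(i,j)$ lets us write this expression unambiguously despite the non-commutativity of the $Z_i$'s.

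For the first equality, set $F:=\nabla^{\hhh,S}f$. A direct check gives $F = \sum_i \bigl(\nabla^{\hh,S}_i f\bigr) Z_i$, i.e.\ $F$ is $\nabla^\hhh f$ projected onto $\hhh TS$ along $\vh$. Pick a local orthonormal frame $\En_1,\ldots,\En_{2n-1}$ of $\hhh TS$ and write $\En_j = \sum_i \En_j^i Z_i$. Since $\En_j f = \langle F, \En_j\rangle$, the metric property \eqref{metricequationnablas} applied to $F,\En_j\in\Gamma(\hhh TS)$ gives
\[
\hess^{\hhh,S}f(\En_j,\En_j) = \En_j(\En_j f) - (\nabla^S_{\En_j}\En_j)f = \langle \nabla^S_{\En_j} F,\En_j\rangle,
\]
after noting that $(\nabla^S_{\En_j}\En_j)f = \langle F,\nabla^S_{\En_j}\En_j\rangle$ because $\nabla^S_{\En_j}\En_j$ is horizontal tangent. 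Since $\En_j\perp\vh$, the right-hand side also equals $\langle \nabla_{\En_j} F,\En_j\rangle$, and since $\nabla_{\En_j} Z_i = 0$ by \eqref{phflat}, one has $\nabla_{\En_j} F = \sum_i \En_j(\nabla^{\hh,S}_i f)\,Z_i$. Summing over $j$ and using that $\En_1,\ldots,\En_{2n-1},\vh$ is an orthonormal basis of $\hhh$ at each point of $S$, so that $\sum_j \En_j^i\En_j^k = \delta_{i,k} - \vh_i\vh_k = g_\hh^{i,k}$, the operator $\sum_j \En_j^i\En_j$ coincides with $L_i$, and hence $\Delta^{\hhh,S}f = \sum_i L_i(L_i f) = \sum_i\nabla^{\hh,S}_i\nabla^{\hh,S}_i f$.

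The main obstacle here is organizational rather than conceptual: one must keep track of the non-commutativity of the $Z_i$'s when expanding the second-order terms, and verify at each step that the chosen extension of $f$ to a neighborhood of $S$ does not affect the final answer. Independence of the extension is automatic on the left, where $\hess^{\hhh,S}$ is intrinsic, and it is built into the definition of $\nabla^{\hh,S}_i$ on the right, as discussed in \cite{MR2354992}. Everything else reduces to the algebraic facts $|\vh|=1$ and $\En_1,\ldots,\En_{2n-1},\vh$ orthonormal in $\hhh$.
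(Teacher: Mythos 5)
Your proposal is correct and follows essentially the paper's own computation: your proof of the second equality is the same expansion of $\nabla^{\hh,S}_i\nabla^{\hh,S}_i f$ using $|\vh|=1$ and $\divv_\hhh\vh=H$ from \eqref{hvarieespressioni}, and your proof of the first equality rests on exactly the frame relations $\sum_j\langle \En_j,Z_i\rangle\langle \En_j,Z_k\rangle=g_\hh^{i,k}$ (the paper's \eqref{coeffequations}) together with the flatness $\nabla_{Z_i}Z_j=0$ of \eqref{phflat}. The only, harmless, difference is organizational: you obtain $\Delta^{\hhh,S}f=\sum_i\nabla^{\hh,S}_i\nabla^{\hh,S}_i f$ directly as an operator identity via the projected gradient $F=\nabla^{\hhh,S}f$ and metric compatibility, whereas the paper expands $\sum_j\En_j(\En_jf)-\sum_j(\nabla^S_{\En_j}\En_j)f$ in coordinates a second time and matches it with the common middle expression.
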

\begin{proof}
Recalling that $|\vh|=1$,
    \begin{equation*}
        \begin{split}
    \sum_{i=1}^{2n}\nabla^{\hh,S}_i\nabla^{\hh,S}_i f
            &=\sum_{i=1}^{2n}Z_iZ_i f-\langle \nabla^\hhh f,\vh\rangle\sum_{i=1}^{2n}Z_i\vh_i-\sum_{i,j=1}^{2n}Z_iZ_j f\vh_i\vh_j-\sum_{i,j=1}^{2n}Z_jfZ_i\vh_j\vh_i\\
            &\quad-\sum_{i}^{2n}\langle\nabla^\hhh Z_i f,\vh\rangle\vh_i+\sum_{i=1}^{2n}\langle\nabla^\hhh(\langle\nabla^\hhh f,\vh\rangle\vh_i),\vh\rangle\vh_i\\
&\overset{\eqref{hvarieespressioni}}{=}\sum_{i,j=1}^{2n}g_\hh^{i,j}Z_iZ_j f-H\langle \nabla^\hhh f,\vh\rangle-\sum_{i,j=1}^{2n}Z_jfZ_i\vh_j\vh_i-\sum_{i,j=1}^{2n}Z_jZ_if\vh_i\vh_j\\
&\quad+\sum_{i,j,k=1}^{2n}Z_jZ_kf(\vh_i)^2\vh_j\vh_k+\sum_{i,j,k=1}^{2n}Z_kfZ_j\vh_k(\vh_i)^2\vh_j+\sum_{i,j,k=1}^{2n}Z_kf\vh_kZ_j\vh_i\vh_i\vh_j\\
&=\sum_{i,j=1}^{2n}g_\hh^{i,j}Z_iZ_j f-H \langle \nabla^\hhh f,\vh\rangle-\sum_{i,j=1}^{2n}Z_jfZ_i\vh_j\vh_i-\sum_{i,j=1}^{2n}Z_jZ_if\vh_i\vh_j\\
&\quad+\sum_{j,k=1}^{2n}Z_jZ_kf\vh_j\vh_k+\sum_{j,k=1}^{2n}Z_kfZ_j\vh_k\vh_j\\
&=\sum_{i,j=1}^{2n}g_\hh^{i,j}Z_iZ_j f-H \langle \nabla^\hhh f,\vh\rangle.
        \end{split}
    \end{equation*}
  Let now $\En_1,\ldots,\En_{2n-1}$ be a local orthonormal frame of $\hhh TS$. Set $a_i^j=\langle\En_i,Z_j\rangle$ for any $i=1,\ldots,2n-1$ and any $j=1,\ldots,2n$.
        Then
        \begin{equation}\label{coeffequations}
            \sum_{k=1}^{2n}a_i^ka_j^k=\delta_{ij},\qquad\sum_{k=1}^{2n}a_i^k\vh_k=0\qquad\text{and}\qquad\sum_{k=1}^{2n-1}a_k^la_k^m=g_\hh^{l,m}
        \end{equation}
        for any $i,j=1,\ldots,2n-1$ and any $l,m=1,\ldots,2n$. We conclude that
        \begin{equation*}
            \begin{split}
                \Delta^{\hhh,S} f
                &=\sum_{j=1}^{2n-1}E_j(E_j f)-\sum_{j=1}^{2n-1}\nabla^S_{E_j}E_j f\\
                &=\sum_{j=1}^{2n-1}\sum_{h,k=1}^{2n}a_j^h a_j^kZ_hZ_kf+\sum_{j=1}^{2n-1}\sum_{h,k=1}^{2n}a_j^hZ_h(a_j^k)Z_kf\\
                &\quad-\sum_{j=1}^{2n-1}\langle\nabla_{E_j}E_j,\nabla f\rangle+\sum_{j=1}^{2n-1}\langle\nabla_{E_j}E_j,\vh\rangle \langle\nabla f,\vh\rangle\\
                 &\overset{\eqref{coeffequations}}{=}\sum_{h,k=1}^{2n}g_\hh^{h,k}Z_hZ_kf+\sum_{j=1}^{2n-1}\sum_{h,k=1}^{2n}a_j^hZ_h(a_j^k)Z_kf\\
                &\quad-\sum_{j=1}^{2n-1}\sum_{h,k=1}^{2n}a_j^h\langle\nabla_{Z_h}a_j^kZ_k,\nabla f\rangle-H \langle\nabla f,\vh\rangle\\
                    &=\sum_{h,k=1}^{2n}g_\hh^{h,k}Z_hZ_kf-H \langle\nabla f,\vh\rangle,
            \end{split}
        \end{equation*}
        whence the thesis follows. 
\end{proof}
\subsection{The commutation tensor}
We know from \Cref{commutareh} how the commutation tensor $C$ intervenes in the lack of commutativity of $h$. Next we discuss how it affects the commutation of the covariant derivative of $h$. First, $C$ evolves along tangent vector fields as follows.
\begin{proposition}\label{Csimmetric}
    Let $X\in\Gamma(TS)$ and let $Y,Z\in\Gamma(\hhh TS)$. Then
    \begin{equation}\label{commutatortensorpersimons}
        \left(\nabla^S_X C\right)(Y,Z)=C(Z,\vh)h(X,Y)-C(Y,\vh)h(X,Z).
    \end{equation}
\end{proposition}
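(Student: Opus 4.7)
The plan is to unwind the definition of the covariant derivative of the tensor $C$ and reduce the computation to a repeated application of the metric property of $\nabla$ together with the commutation relation $\nabla J = J \nabla$ from \eqref{jcommutaeq}. Expanding according to the general definition of $\nabla^S$ on a $(2,0)$-tensor,
\[
(\nabla^S_X C)(Y,Z) = X\langle J(Y), Z\rangle - \langle J(\nabla^S_X Y), Z\rangle - \langle J(Y), \nabla^S_X Z\rangle,
\]
so the first goal is to rewrite the leading term $X\langle J(Y), Z\rangle$. Because $\nabla$ is metric and \eqref{jcommutaeq} implies $\nabla_X J(Y) = J(\nabla_X Y)$, this equals $\langle J(\nabla_X Y), Z\rangle + \langle J(Y), \nabla_X Z\rangle$.

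Next, I would replace $\nabla_X Y$ and $\nabla_X Z$ by their decomposition along $\hhh TS$ and $\vh$. The identity $\nabla_X W = \nabla^S_X W - h(X,W)\vh$ is stated in the text for $X,W \in \Gamma(\hhh TS)$, but, using the remark after \eqref{hvarieespressioni} and the extension of $h(\s,\cdot)$ via \eqref{nablaessenu}, it remains valid for every $X \in \Gamma(TS)$ and $W \in \Gamma(\hhh TS)$, since $\langle \nabla_X W,\vh\rangle = -\langle W,\nabla_X\vh\rangle = -h(X,W)$ from $\langle W,\vh\rangle = 0$. Substituting, the tangent parts $\langle J(\nabla^S_X Y), Z\rangle$ and $\langle J(Y), \nabla^S_X Z\rangle$ cancel exactly against the two negative terms in the expansion above, leaving only the boundary contributions
\[
-h(X,Y)\langle J(\vh),Z\rangle - h(X,Z)\langle J(Y),\vh\rangle.
\]

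Finally, I rewrite the surviving terms using $C(V,W) = \langle J(V),W\rangle$ and exploit the skew-symmetry $C(\vh,Z) = -C(Z,\vh)$ guaranteed by \eqref{jeskew}. This immediately yields the desired formula
\[
(\nabla^S_X C)(Y,Z) = C(Z,\vh) h(X,Y) - C(Y,\vh) h(X,Z).
\]
The computation is essentially bookkeeping; the only subtle point is the enlarged domain of $X$ (all of $\Gamma(TS)$ rather than $\Gamma(\hhh TS)$), which is the main reason one must invoke the extension of $h$ to $\s$ so that the normal-component identity $\langle \nabla_X W,\vh\rangle = -h(X,W)$ continues to hold. Once this is in place, no curvature or torsion terms enter the calculation, and the proof reduces to a direct verification.
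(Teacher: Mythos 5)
Your proposal is correct and follows essentially the same route as the paper: unwind the definition of $\nabla^S_X C$, use that $\nabla$ is metric together with \eqref{jcommutaeq}, and observe that after cancelling the tangential parts only the normal components survive, which are identified with $h(X,\cdot)$ via $\langle\nabla_X W,\vh\rangle=-h(X,W)$ (valid for all $X\in\Gamma(TS)$ thanks to the extension of $h(\s,\cdot)$). The only cosmetic difference is that you substitute $\nabla_X=\nabla^S_X-h(X,\cdot)\vh$ into the $\nabla$-terms, whereas the paper replaces $\nabla^S$ by $\nabla$ minus its normal component; the computations coincide.
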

\begin{proof}
Let $X,Y,Z$ be as in the statement. Then, by \eqref{jcommutaeq},
\begin{equation*}
    \begin{split}
        \nabla^S_XC(Y,Z)
        &=X\langle J(Y),Z \rangle +\langle \nabla^S_X Y,J(Z) \rangle -\langle J(Y),\nabla^S_XZ\rangle\\
        &=\langle \nabla_X J(Y),Z\rangle+\langle J(Y),\nabla_X Z\rangle+\langle \nabla_X Y,J(Z))-\langle J(Y),\nabla_XZ\rangle\\
        &\quad-\langle \nabla_X Y,\vh\rangle\langle J(Z),\vh\rangle+\langle \nabla_XZ,\vh\rangle\langle J(Y),\vh\rangle\\
        &=C(Z,\vh)h(X,Y)-C(Y,\vh)h(X,Z).
    \end{split}
\end{equation*}
\end{proof}
 Thanks to \Cref{commutareh} and \Cref{Csimmetric}, we describe the lack of commutativity of $\nabla ^Sh$.
 \begin{proposition}\label{codazzifake}
     Let $X,Y,Z\in\Gamma(\hhh TS)$. Then
     \begin{equation}\label{simmetryandnabla}
         \nabla^S_Xh(Y,Z)=\nabla^S_Xh(Z,Y)+2(X\alpha) C(Z,Y)+2\alpha C(Y,\vh)h(X,Z)-2\alpha C(Z,\vh)h(X,Y).
     \end{equation}
 \end{proposition}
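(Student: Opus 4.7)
The plan is to exploit the symmetric-antisymmetric decomposition already made explicit in Propositions \ref{commutareh} and \ref{Csimmetric}, so that the whole statement reduces to a Leibniz-type bookkeeping identity relating $\nabla^S h$, $\nabla^S C$, and multiplication by $\alpha$.

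First, I would extract from \eqref{comecommutah} the pointwise commutation formula for $h$: since $C$ is skew-symmetric by \eqref{jeskew}, the identity $h(Y,Z)-h(Z,Y)=2\alpha\,C(Z,Y)$ holds for every pair of horizontal tangent fields. Then I would expand
\begin{equation*}
(\nabla^S_X h)(Y,Z)-(\nabla^S_X h)(Z,Y) = X\bigl(h(Y,Z)-h(Z,Y)\bigr)-\bigl[h(\nabla^S_XY,Z)-h(Z,\nabla^S_XY)\bigr]-\bigl[h(Y,\nabla^S_XZ)-h(\nabla^S_XZ,Y)\bigr]
\end{equation*}
directly from the definition of $\nabla^S h$ as a $(3,0)$-tensor field, and substitute the commutation formula into each of the three bracketed differences. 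The first term yields $2(X\alpha)C(Z,Y)+2\alpha\, X(C(Z,Y))$ by the product rule, while the remaining two become $-2\alpha\,C(Z,\nabla^S_X Y)$ and $-2\alpha\,C(\nabla^S_X Z,Y)$ respectively.

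The second step is to recognize that the combination $X(C(Z,Y))-C(\nabla^S_X Z,Y)-C(Z,\nabla^S_X Y)$ is precisely $(\nabla^S_X C)(Z,Y)$ by the tensorial definition of the covariant derivative. This collapses the computation to
\begin{equation*}
(\nabla^S_X h)(Y,Z)-(\nabla^S_X h)(Z,Y)=2(X\alpha)C(Z,Y)+2\alpha (\nabla^S_X C)(Z,Y),
\end{equation*}
and finally one applies Proposition \ref{Csimmetric} with the arguments $Y$ and $Z$ swapped to rewrite $(\nabla^S_X C)(Z,Y)=C(Y,\vh)h(X,Z)-C(Z,\vh)h(X,Y)$, which yields exactly \eqref{simmetryandnabla}.

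There is no real obstacle here: the only place where one has to be careful is tracking the sign in the skew-symmetry of $C$ so that the three $h$-differences are converted into the correct $C$-expressions, and making sure that the tensorial differentiation on the left is consistent with differentiating the pointwise identity on the right. In particular, no torsion correction appears in this passage because both $h$ and $C$ are genuine $(2,0)$-tensors and the torsion of $\nabla^S$ only enters when one commutes two covariant derivatives (as in \eqref{commutatensoreconriemannpersimons}), which is not needed here.
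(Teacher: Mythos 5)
Your proposal is correct and is essentially the paper's own argument: expand the covariant derivative by its tensorial definition, substitute the commutation identity \eqref{comecommutah} for each occurrence of $h$, recognize the resulting combination as $2(X\alpha)C(Z,Y)+2\alpha(\nabla^S_XC)(Z,Y)$, and conclude with Proposition \ref{Csimmetric}. The sign bookkeeping you describe matches the paper's computation, so there is nothing to add.
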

 \begin{proof}
     Fix $X,Y,Z$ as in the statement. Then, by \Cref{commutareh} and \Cref{Csimmetric},
     \begin{equation*}
         \begin{split}
             \nabla^S_Xh(Y,Z)&=X(h(Y,Z))-h(\nabla^S_XY,Z)-h(Y,\nabla^S_XZ)\\
             &=X(h(Z,Y))+2X\alpha C(Z,Y)+2\alpha X(C(Z,Y))\\
             &\quad-h(Z,\nabla^S_XY)-2\alpha C(Z,\nabla_X^SY)-h(\nabla^S_XZ,Y)-2\alpha C(\nabla^S_XZ,Y)\\
             &=\nabla^S_Xh(Z,Y)+2X\alpha C(Z,Y)+2\alpha\nabla^S_X C(Z,Y)\\
             &=\nabla^S_Xh(Z,Y)+2X\alpha C(Z,Y)+2\alpha C(Y,\vh)h(X,Z)-2\alpha C(Z,\vh)h(X,Y).
         \end{split}
     \end{equation*}
 \end{proof} 
 
 \subsection{Gauss-Codazzi equations}
With the next result we derive the sub-Riemannian counterpart of the classical Gauss-Codazzi equations. We refer to \cite{santos} for a proof, which we include anyway for the sake of completeness.
 \begin{proposition}[Gauss-Codazzi equations]\label{gcsub}
     Let $X,Y,Z,W\in\Gamma(\hhh TS)$. Then the \emph{Gauss equation}
\begin{equation}\label{gaussssssub}
    R^S(X,Y,Z,W)=h(Y,Z)h(X,W)-h(X,Z)h(Y,W)
\end{equation}
and the \emph{Codazzi equation}
\begin{equation}\label{codazzipersimons}
    \left(\nabla^S_Yh\right)(X,Z)=\left(\nabla^S_Xh\right)(Y,Z)+2C(X,Y) h(\s,Z)
\end{equation}
hold.
 \end{proposition}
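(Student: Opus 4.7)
The plan is to derive both identities from the flatness of the ambient pseudohermitian connection, namely $R\equiv 0$ on $\hh^n$ (a consequence of \eqref{phflat}), together with the Gauss-type splitting $\nabla_X Y = \nabla^S_X Y - h(X,Y)\vh$ valid for $X,Y\in\Gamma(\hhh TS)$, and with the fact that $\nabla_X\vh = A(X)$ lies in $\Gamma(\hhh TS)$ and is therefore orthogonal to $\vh$. The two identities will then come from expanding $R(X,Y)Z = 0$ and taking the horizontal-tangent or $\vh$-normal component, respectively.

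For the Gauss equation I would pair $\nabla_X\nabla_Y Z - \nabla_Y\nabla_X Z - \nabla_{[X,Y]} Z = 0$ with $W\in\Gamma(\hhh TS)$. Iterating the splitting, the first two terms produce $\langle \nabla^S_X\nabla^S_Y Z, W\rangle - h(Y,Z)h(X,W)$ and its $X\leftrightarrow Y$ analogue, the $\vh$-pieces vanishing against $W$. For the bracket I would use \eqref{torsionofnablas} to write $[X,Y] = \nabla^S_X Y - \nabla^S_Y X - 2C(X,Y)\s$, and then exploit that $\nabla_\s Z - \nabla^S_\s Z$ is a multiple of $\vh$, hence orthogonal to $W$; the resulting $\s$-contribution is precisely what is needed to complete $\nabla^S_{\nabla^S_X Y - \nabla^S_Y X} Z$ into $\nabla^S_{[X,Y]} Z$. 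Assembling, the three terms reorganize as $R^S(X,Y,Z,W) - h(Y,Z)h(X,W) + h(X,Z)h(Y,W) = 0$, which is \eqref{gaussssssub}.

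For the Codazzi equation I would instead take $\langle\,\cdot\,,\vh\rangle$ of the same ambient identity. Since $\langle A(X),\vh\rangle = 0$, a direct computation yields $\langle\nabla_X\nabla_Y Z,\vh\rangle = -h(X,\nabla^S_Y Z) - X(h(Y,Z))$; expanding $X(h(Y,Z))$ via the Leibniz rule for $\nabla^S h$ and subtracting the $X\leftrightarrow Y$ analogue produces $-(\nabla^S_X h)(Y,Z) + (\nabla^S_Y h)(X,Z)$ together with an asymmetric remainder $-h(\nabla^S_X Y, Z) + h(\nabla^S_Y X, Z)$. The bracket term, again rewritten through \eqref{torsionofnablas}, cancels exactly this asymmetric remainder and leaves the single new contribution $2C(X,Y)\,h(\s,Z)$, once I invoke the identity $\langle\nabla_\s Z, \vh\rangle = -h(\s,Z)$, which follows by differentiating $\langle Z,\vh\rangle\equiv 0$ along $\s$ and recalling that $h(\s,\cdot)$ is well-defined thanks to \eqref{nablaessenu}. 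Rearranging delivers \eqref{codazzipersimons}.

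The only real subtlety is the bookkeeping of the torsion of $\nabla$: its $\s$-defect feeds both into the expression for $[X,Y]$ and into the comparison between $\nabla_\s$ and $\nabla^S_\s$. For Gauss these torsion contributions conspire to cancel, which is why \eqref{gaussssssub} retains its classical Riemannian form; for Codazzi they persist as the $2C(X,Y)\,h(\s,Z)$ correction, the genuine sub-Riemannian signature of \eqref{codazzipersimons}.
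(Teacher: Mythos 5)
Your proposal is correct and follows essentially the same route as the paper: expand $R(X,Y)Z\equiv 0$ using the splitting $\nabla_XY=\nabla^S_XY-h(X,Y)\vh$, identify the torsion contribution through \eqref{torsionofnablas}, and read off the Gauss equation from the component along $W$ and the Codazzi equation from the component along $\vh$. The only (cosmetic) difference is that you project at the outset and decompose $[X,Y]$ explicitly via $\tors$, whereas the paper derives the full vector identity first and projects at the end.
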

 \begin{proof}
We know that $   \nabla_XY=\nabla^S_XY-h(X,Y)\vh.$
Hence
\begin{equation*}
\begin{split}
    \nabla_X\nabla_YZ&=\nabla_X\nabla_Y^SZ-\nabla_X(h(Y,Z)\vh)=\nabla^S_X\nabla^S_YZ-h(X,\nabla^S_YZ)\vh-X(h(Y,Z))\vh-h(Y,Z)A(X).
\end{split}
\end{equation*}
Similarly,
\begin{equation*}
    -\nabla_Y\nabla_XZ=-\nabla^S_Y\nabla^S_XZ+h(Y,\nabla^S_XZ)\vh+Y(h(X,Z))\vh+h(X,Z)A(Y)
\end{equation*}
Moreover,
\begin{equation*}
    -\nabla_{[X,Y]}Z=-\nabla^S_{[X,Y]}Z+h([X,Y],Z)\vh
\end{equation*}
Summing the three equations term by term we get that
\begin{equation*}
\begin{split}
     0&\overset{R\equiv 0}{=}R^S(X,Y)Z+h(X,Z)A(Y)-h(Y,Z)A(X)\\
     &\quad+\left(Y(h(X,Z))-h(\nabla_Y^SX,Z)-h(X,\nabla^S_YZ)\right)\vh\\
     &\quad\left(-X(h(Y,Z))+h(   \nabla^S_XY,Z)+h(Y,\nabla^S_XZ)\right)\vh\\
     &\quad+h(\nabla_Y^SX-\nabla^S_XY+[X,Y],Z)\vh\\
     &=R^S(X,Y)Z+h(X,Z)A(Y)-h(Y,Z)A(X)\\
     &\quad+\nabla^S_Yh(X,Z)\vh-\nabla^S_Xh(Y,Z)\vh-h(\tor_{\nabla^S} (X,Y),Z)\vh\\
     &=R^S(X,Y)Z+h(X,Z)A(Y)-h(Y,Z)A(X)\\
     &\quad+\nabla^S_Yh(X,Z)\vh-\nabla^S_Xh(Y,Z)\vh-2\langle J(X),Y\rangle h(S,Z)\vh.
\end{split}
\end{equation*}
The thesis follows projecting the previous identity either on $W$ or on $\vh$.
 \end{proof}
In the following, we shall also need the following Codazzi equation for the symmetric form $\tilde h$.
\begin{proposition}[Codazzi equation for $\th$]\label{codazzipertildehpropositionnnnnnnnnnnnnn}
    Let $X,Y,Z\in\Gamma(\hhh TS)$. Then
    \begin{equation*}
    \begin{split}
         \ns_Y \th &(X,Z)-\ns_X\th(Y,Z)=2(Z\alpha)C(X,Y)+(Y\alpha)C(X,Z)-(X\alpha)C(Y,Z)\\
             &\quad +2\alpha^2 C(\vh,Z)C(X,Y)+\alpha C(\vh,X)h(Y,Z)-\alpha C(\vh,Y)h(X,Z). 
    \end{split}
    \end{equation*}
\end{proposition}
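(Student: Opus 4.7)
The plan is to reduce the Codazzi equation for $\tilde h$ to the one for $h$ already established in \Cref{gcsub}, paying careful attention to the two symmetry defects that enter: the failure of $h$ to be symmetric, quantified by \eqref{comecommutah}, and the explicit expression for $\nabla^S C$ coming from \Cref{Csimmetric}.

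First, I would start from the decomposition $\tilde h=h+\alpha C$ provided by \eqref{theh} and apply the Leibniz-type rule \eqref{leibnizpertensori} to each term, obtaining
\begin{equation*}
\ns_Y\tilde h(X,Z)-\ns_X\tilde h(Y,Z) = \bigl[\ns_Y h(X,Z)-\ns_X h(Y,Z)\bigr] + \bigl[(Y\alpha)C(X,Z)-(X\alpha)C(Y,Z)\bigr] + \alpha\bigl[\ns_Y C(X,Z)-\ns_X C(Y,Z)\bigr].
\end{equation*}
The middle bracket already matches two of the target terms.

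Next, I would rewrite the $h$-bracket using the Codazzi equation \eqref{codazzipersimons}, which gives $\ns_Y h(X,Z)-\ns_X h(Y,Z)=2C(X,Y)h(\s,Z)$, and then turn $h(\s,Z)$ into something first order in $\alpha$. This is where \eqref{accaesse} comes in: since $A(\s)=\nabla_\s\vh$ by the remark after \eqref{hvarieespressioni}, one has $h(\s,Z)=\langle\nabla_\s\vh,Z\rangle=Z\alpha+2\alpha^2\langle J(\vh),Z\rangle=Z\alpha+2\alpha^2 C(\vh,Z)$. Substituting produces the term $2(Z\alpha)C(X,Y)$ together with an $\alpha^2$-remainder $4\alpha^2 C(\vh,Z)C(X,Y)$.

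For the $C$-bracket, I would apply \eqref{commutatortensorpersimons} twice to get
\begin{equation*}
\ns_Y C(X,Z)-\ns_X C(Y,Z)=C(Z,\vh)\bigl[h(Y,X)-h(X,Y)\bigr]-C(X,\vh)h(Y,Z)+C(Y,\vh)h(X,Z).
\end{equation*}
The bracketed difference is then resolved via \eqref{comecommutah}, yielding $h(Y,X)-h(X,Y)=2\alpha C(X,Y)$, so that multiplication by $\alpha$ contributes $2\alpha^2 C(Z,\vh)C(X,Y)$ together with the $\alpha$-linear terms in $h(Y,Z)$ and $h(X,Z)$. Finally, the skew-symmetry of $C$ (a consequence of \eqref{jeskew}) turns $C(Z,\vh)$ into $-C(\vh,Z)$ and similarly for the other arguments, and a short bookkeeping shows that the two $\alpha^2 C(\vh,Z) C(X,Y)$ contributions combine as $4-2=2$, giving precisely the prescribed $2\alpha^2 C(\vh,Z)C(X,Y)$.

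The only genuinely delicate point is the sign bookkeeping in the last step: the competing $\alpha^2$-contributions arise with opposite signs from the Codazzi term and from the non-symmetry of $h$ in $\nabla^S C$, and they must be combined correctly to reproduce the coefficient $2$ of $\alpha^2 C(\vh,Z)C(X,Y)$ in the statement. Apart from this, the argument is a purely algebraic rearrangement, so no further analytic input is needed.
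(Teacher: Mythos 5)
Your proposal is correct and follows essentially the same route as the paper's proof: decompose $\tilde h=h+\alpha C$, apply the Codazzi equation \eqref{codazzipersimons} together with \eqref{accaesse} to resolve $h(\s,Z)$, expand $\nabla^S C$ via \Cref{Csimmetric}, and use \eqref{comecommutah} plus the skew-symmetry of $C$ to combine the $\alpha^2$-terms into the coefficient $2$. The sign bookkeeping you describe ($4-2=2$) is exactly how the paper's computation closes.
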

\begin{proof}
    In view of \Cref{commutareh}, \Cref{Csimmetric} and \Cref{gcsub},
    \begin{equation*}
        \begin{split}
             \ns_Y \th &(X,Z)-\ns_X\th(Y,Z)=\ns_Y h (X,Z)-\ns_Xh(Y,Z)+\ns_Y(\alpha C(X,Z))-\ns_X(\alpha C(Y,Z))\\
             &=2C(X,Y)h(\s,Z)+(Y\alpha)C(X,Z)-(X\alpha)C(Y,Z)\\
             &\quad +\alpha\ns_Y C(X,Z)-\alpha\ns_XC(Y,Z)\\
             &\overset{\eqref{accaesse}}{=}2(Z\alpha)C(X,Y)+4\alpha^2C(X,Y)C(\vh,Z) +(Y\alpha)C(X,Z)-(X\alpha)C(Y,Z)\\
             &\quad +\alpha C(Z,\vh)h(Y,X)-\alpha C(X,\vh)h(Y,Z)\\
             &\quad -\alpha C(Z,\vh)h(X,Y)+\alpha C(Y,\vh)h(X,Z)\\
             &=2(Z\alpha)C(X,Y)+4\alpha^2C(X,Y)C(\vh,Z)+(Y\alpha)C(X,Z)-(X\alpha)C(Y,Z)\\
             &\quad +2\alpha^2 C(Z,\vh)C(X,Y)-\alpha C(X,\vh)h(Y,Z)+\alpha C(Y,\vh)h(X,Z)\\
             &=2(Z\alpha)C(X,Y)+(Y\alpha)C(X,Z)-(X\alpha)C(Y,Z)\\
             &\quad +2\alpha^2 C(\vh,Z)C(X,Y)+\alpha C(\vh,X)h(Y,Z)-\alpha C(\vh,Y)h(X,Z).            
        \end{split}
    \end{equation*}
\end{proof}

\begin{corollary}
        Let $X,Y\in\Gamma(\hhh TS)$. Then
        \begin{equation}\label{codazzihtildeeqaba}
        \begin{split}
             \nabla^S_X\tilde h(Y,Y)&=\nabla^S_Y\tilde h(X,Y)+3Y\alpha C(Y,X)+3\alpha^2\langle J(\vh),Y\rangle C(Y,X)\\
             &\quad+\alpha\langle J(\vh),Y\rangle \tilde h(Y,X)-\alpha\langle J(\vh),X\rangle \tilde h(Y,Y).
        \end{split}
        \end{equation}
\end{corollary}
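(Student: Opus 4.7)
The plan is to derive the claimed identity as a direct consequence of the Codazzi equation for $\tilde h$ just established (\Cref{codazzipertildehpropositionnnnnnnnnnnnnn}), after specializing it and converting the leftover $h$-terms back into $\tilde h$-terms via the relation $h(X,Y)=\tilde h(X,Y)-\alpha C(X,Y)$ coming from \eqref{theh}.

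More concretely, I would apply \Cref{codazzipertildehpropositionnnnnnnnnnnnnn} with the third slot set equal to $Y$, but with the roles of $X$ and $Y$ interchanged with respect to the first two slots. This gives an expression of the form
\begin{equation*}
    \nabla^S_X\tilde h(Y,Y)-\nabla^S_Y\tilde h(X,Y)=2(Y\alpha)C(Y,X)+(X\alpha)C(Y,Y)-(Y\alpha)C(X,Y)+2\alpha^2C(\vh,Y)C(Y,X)+\alpha C(\vh,Y)h(X,Y)-\alpha C(\vh,X)h(Y,Y).
\end{equation*}
The skew-symmetry of $C$, i.e. $C(Y,Y)=0$ and $-(Y\alpha)C(X,Y)=(Y\alpha)C(Y,X)$, immediately collapses the first line to $3(Y\alpha)C(Y,X)$, which produces the first term of the target identity.

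It then remains to convert the two $h$-terms on the second line into $\tilde h$-terms. For this I use \eqref{theh} in the form $h(X,Y)=\tilde h(X,Y)-\alpha C(X,Y)$ and $h(Y,Y)=\tilde h(Y,Y)$ (since $C(Y,Y)=0$), together with the symmetry of $\tilde h$, obtaining
\begin{equation*}
    \alpha C(\vh,Y)h(X,Y)-\alpha C(\vh,X)h(Y,Y)=\alpha C(\vh,Y)\tilde h(Y,X)+\alpha^2 C(\vh,Y)C(Y,X)-\alpha C(\vh,X)\tilde h(Y,Y),
\end{equation*}
where I also used $-C(X,Y)=C(Y,X)$. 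Adding the surviving $2\alpha^2C(\vh,Y)C(Y,X)$ gives a total coefficient $3\alpha^2$ on $C(\vh,Y)C(Y,X)$, and recalling that $C(\vh,Y)=\langle J(\vh),Y\rangle$ and $C(\vh,X)=\langle J(\vh),X\rangle$ assembles exactly the right-hand side of \eqref{codazzihtildeeqaba}.

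There is essentially no obstacle here: the derivation is a one-step algebraic rearrangement of \Cref{codazzipertildehpropositionnnnnnnnnnnnnn}. The only care required is to track signs when using skew-symmetry of $C$ and to remember that $\tilde h$ is symmetric while $h$ is not, so that the two asymmetric $h$-terms produce different $\tilde h$-contributions after substitution; this is also what makes the coefficient in front of $\alpha^2 C(\vh,Y)C(Y,X)$ jump from $2$ to $3$.
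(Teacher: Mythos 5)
Your derivation is correct and is exactly the intended route: the paper states this corollary without proof as an immediate specialization of the Codazzi equation for $\tilde h$ (\Cref{codazzipertildehpropositionnnnnnnnnnnnnn}), and your substitution (swapping the roles of $X$ and $Y$, setting the third slot to $Y$), followed by the conversion $h=\tilde h-\alpha C$ and the skew-symmetry of $C$, reproduces \eqref{codazzihtildeeqaba} with the correct coefficients, including the jump from $2\alpha^2$ to $3\alpha^2$.
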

 \subsection{Variation formulas}\label{varformsectionnnnnnn}
 Let $S\subseteq\hn$ be a smooth, embedded, non-characteristic hypersurface without boundary. Assume in addition that $S$ is two-sided. Let $\Om\subseteq\hh^n$ be an open bounded set such that $\Om\cap S\neq\emptyset$, and let $\xi\in C^1_c(\Om)$. Then it is known (cf. \cite{MR3319952,MR4316814,santos,tesisimone})
 that
   \begin{equation}\label{firstvariation}
        \frac{d}{dt}\sigma_{\hhh,t}(\Om)\Big\vert_{t=0}=\int_SH\xi\,d\sigma_\hhh
    \end{equation}
    and
    \begin{equation}\label{secondvariation}
        \frac{d^2}{dt^2}\sigma_{\hhh,t}(\Om)\Big\vert_{t=0}=\int_S\left(|\nabla^{\hhh,S}\xi|^2-\xi^2\left(q-H^2\right)\right)\,d\sigma_\hhh,
    \end{equation}
    where
    \begin{equation}\label{q}
        q=\sum_{h,k=1}^{2n}Z_h(\vh_k)Z_k(\vh_h)+4\langle \nabla\alpha,J(\vh)\rangle+4n\alpha^2
    \end{equation}
and where by $\sigma_{\hhh,t}$ we denote the horizontal surface measure associated with the smooth variation $E_t$ along the vector field $\xi\vh$.
Observe that $q$ does not depend on the chosen unitary extension of $\vh$ (cf. \cite{ruled}). Moreover, in view of \cite[Proposition 5.1]{ruled} and \eqref{normatildeetildeh}, 
\begin{equation}\label{variq}
    \begin{split}
        q&=|h|^2+4\langle \nabla\alpha,J(\vh)\rangle+4\alpha^2=|\tilde h|^2+4\langle \nabla\alpha,J(\vh)\rangle+2(n+1)\alpha^2.
    \end{split}
\end{equation}
As customary (cf. e.g. \cite{MR3406514}) we say that $S$ is \emph{area stationary} whenever the quantity in \eqref{firstvariation} vanishes for any $\Om$ and $\xi$ as above, and that $S$ is \emph{stable} if it is area stationary and the quantity in \eqref{secondvariation} is non-negative for any $\Om$ and $\xi$ as above. Notice that $S$ is minimal if and only if it is area stationary. In particular, when $S$ is stable, the \emph{stability inequality}
\begin{equation}\label{stabilityinequality}
        \int_Sq\xi^2\,d\sigma_\hhh\leq\int_S|\nabla^{\hhh, S}\xi|^2\,d\sigma_\hhh
    \end{equation}
 holds for any $\xi\in C^1_c(S)$.
\section{Further properties of the second fundamental forms}\label{adhoccomputsectionnnnnnn}
 In this section we establish some properties of $h$ and $\tilde h$ which will be useful in the next section. %
 In order to facilitate a more conscious reading we would recommend the reader to skip directly to \Cref{secsimkatnew}, and if necessary to go back to this section in accordance with the references to the latter. Through this section we assume $S$ is smooth, immersed, non-characteristic hypersurface.  
\begin{proposition}\label{katofirst}
   Let $\E_1,\ldots,\E_{2n+1}$ be as in \eqref{diagontildeh}.
     Then 
\begin{equation}\label{tildehinejejeiejei}
    \tilde h_p(\E_j,\pi(J(\E_j)))=0,
\end{equation}
so that
\begin{equation}\label{hnonsimejjej}
    h_p(\E_j,\pi (J(\E_j)))=-\alpha+\alpha\langle J(\vh),\E_j\rangle^2\qquad\text{and}\qquad h_p(\pi(J(\E_j)),\E_j)=\alpha-\alpha\langle J(\vh),\E_j\rangle^2.
\end{equation}
    In addition
    \begin{equation}\label{comesetildehmetrica}
        \tilde h_p\left(\nabla^S_{X}\E_i,\E_i\right)=0
    \end{equation}
    for any $i=1,\ldots,2n-1$
    and any $X\in\Gamma(TS)$. Moreover, 
    \begin{equation}\label{comesetildehmetricaboh}
        \sum_{i=1}^{2n-1}Y\left(\tilde h\left(\nabla^S_{X}\E_i,\E_i\right)\right)(p)=0
    \end{equation}
    for any $X,Y\in\Gamma(TS)$. Finally,
\begin{equation}\label{passaggioinproofdihessianaetracca}
             \sum_{i=1}^{2n-1}\left(\tilde h_p(\nabla^S_X\nabla^S_Y\E_i,\E_i)+\tilde h_p(\nabla^S_Y\E_i,\nabla^S_X\E_i)\right)=0
         \end{equation}
          for any $X,Y\in\Gamma(TS)$.
\end{proposition}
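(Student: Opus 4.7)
The proof will unfold in five steps, one per display formula, with the recurring theme that everything reduces either to (a) the eigenvector relation \eqref{diagontildeh}, which lets us simplify $\tilde h_p(\E_j,\cdot)=\lambda_j\langle\E_j,\cdot\rangle$, or (b) the skew-symmetry of the connection one-forms
\[
\omega_{ij}(X):=\langle\nabla^S_X\E_i,\E_j\rangle,\qquad \omega_{ij}(X)=-\omega_{ji}(X),
\]
which is an immediate consequence of \eqref{metricequationnablas} applied to the orthonormality relations $\langle\E_i,\E_j\rangle=\delta_{ij}$. In particular $\omega_{ii}\equiv 0$, so $Y(\omega_{ii}(X))=X(\omega_{ii}(Y))=0$ identically.

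For \eqref{tildehinejejeiejei}, I would use the eigenvector relation to reduce the claim to the inner product $\langle \E_j,\pi(J(\E_j))\rangle$. Expanding $\pi$ via \eqref{proiezionepersimons} gives $\langle\E_j,J(\E_j)\rangle-\langle J(\E_j),\vh\rangle\langle\E_j,\vh\rangle$; the first term vanishes by \eqref{jeskew} and the second by the horizontal tangency of $\E_j$. Formula \eqref{hnonsimejjej} then follows by combining \eqref{tildehinejejeiejei} with \eqref{theh} and \eqref{comecommutah}, after noting that $C(\E_j,\pi(J(\E_j)))=\langle J(\E_j),J(\E_j)\rangle-\langle J(\E_j),\vh\rangle^2=1-\langle J(\vh),\E_j\rangle^2$, where the last identity uses \eqref{jeskew} once more.

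Formula \eqref{comesetildehmetrica} is a short application of the machinery above: by symmetry of $\tilde h$ and expansion of $\nabla^S_X\E_i=\sum_k\omega_{ik}(X)\E_k$, at $p$ we get $\tilde h_p(\nabla^S_X\E_i,\E_i)=\lambda_i\omega_{ii}(X)(p)=0$. For \eqref{comesetildehmetricaboh}, I would differentiate
\[
\tilde h(\nabla^S_X\E_i,\E_i)=\sum_j\omega_{ij}(X)\tilde h(\E_j,\E_i)
\]
with $Y$, evaluate at $p$, and kill the first term via $Y(\omega_{ii}(X))=0$; the remaining contribution is $\sum_{i,j}\omega_{ij}(X)(p)\,Y(\tilde h(\E_j,\E_i))(p)$, which vanishes because $\omega_{ij}$ is skew in $(i,j)$ whereas $Y(\tilde h(\E_j,\E_i))$ is symmetric in $(i,j)$ by symmetry of $\tilde h$.

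The final identity \eqref{passaggioinproofdihessianaetracca} is the only one which needs a slightly more careful bookkeeping and will be the main obstacle. I would expand
\[
\nabla^S_X\nabla^S_Y\E_i=\sum_k X(\omega_{ik}(Y))\,\E_k+\sum_{j,k}\omega_{ij}(Y)\omega_{jk}(X)\,\E_k,
\]
pair with $\E_i$ and use \eqref{diagontildeh} to collapse the sum at $p$ to $\lambda_i\,X(\omega_{ii}(Y))(p)+\lambda_i\sum_j\omega_{ij}(Y)(p)\omega_{ji}(X)(p)$; the first term vanishes again. A parallel expansion gives $\tilde h_p(\nabla^S_Y\E_i,\nabla^S_X\E_i)=\sum_j\lambda_j\omega_{ij}(Y)(p)\omega_{ij}(X)(p)$. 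Summing the two pieces and using $\omega_{ji}=-\omega_{ij}$ yields
\[
\sum_{i,j}\bigl(\lambda_j-\lambda_i\bigr)\,\omega_{ij}(Y)(p)\,\omega_{ij}(X)(p),
\]
which vanishes since the factor $\omega_{ij}(X)\omega_{ij}(Y)$ is symmetric in $(i,j)$ while $\lambda_j-\lambda_i$ is antisymmetric. The only delicate point is making sure no cross terms involving $\nabla^S_X\E_j$ for $j\neq i$ are overlooked; once they are tracked through the expansion, the symmetry/antisymmetry cancellation closes the proof cleanly.
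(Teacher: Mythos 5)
Your proposal is correct and follows essentially the same route as the paper's proof: expand everything in the diagonalizing frame, use the metric compatibility \eqref{metricequationnablas} (i.e. the antisymmetry of the coefficients $\langle\nabla^S_\cdot\E_i,\E_j\rangle$, so in particular $\langle\nabla^S_\cdot\E_i,\E_i\rangle\equiv0$), the symmetry of $\tilde h$, \eqref{jeskew}, and \eqref{theh}--\eqref{comecommutah}, concluding \eqref{comesetildehmetricaboh} and \eqref{passaggioinproofdihessianaetracca} by the same symmetric--antisymmetric cancellation in the indices $(i,j)$ that the paper carries out via its relabeling identities. Your introduction of the connection one-forms $\omega_{ij}$ is only a notational repackaging of the paper's computation, with all steps checking out.
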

\begin{proof}
To prove \eqref{tildehinejejeiejei}, notice that
\begin{equation*}
    \tilde h(\E_j,\pi(J(\E_j)))=\sum_{k=1}^{2n-1}\langle \pi(J(\E_j)),\E_k\rangle\tilde h(\E_j,\E_k)=\langle J(\E_j),\E_j\rangle\tilde h(\E_j,\E_j)=0, 
\end{equation*}
while \eqref{hnonsimejjej} follows from \eqref{pijeixinhprmo} and \eqref{theh}.
    Fix $i=1,\ldots,2n-1$. Then
    \begin{equation*}
        \begin{split}
            \tilde h\left(\nabla^S_{X}\E_i,\E_i\right)
            =\sum_{k=1}^{2n-1}\left\langle\nabla^S_{X}\E_i,\E_k\right\rangle \th\left(\E_k,\E_i\right)=\lambda_i\left\langle\nabla^S_{X}\E_i,\E_i\right\rangle =0,
        \end{split}
    \end{equation*}
  whence \eqref{comesetildehmetrica} follows.  Moreover, \eqref{comesetildehmetricaboh} follows since
    \begin{equation*}
        \begin{split}
              \sum_{i=1}^{2n-1}Y\Big(\tilde h\Big(\nabla^S_{X}&\E_i,\E_i\Big)\Big)(p)=\sum_{i,k=1}^{2n-1}Y\left(\left\langle\nabla^S_{X}\E_i,\E_k\right\rangle \th\left(\E_k,\E_i\right)\right)(p)\\
              &=\sum_{i,k=1}^{2n-1}\tilde h_p(\E_i,\E_k) Y\left(\left\langle\nabla^S_{X}\E_i,\E_k\right\rangle \right)(p)+\sum_{i,k=1}^{2n-1}\left\langle\nabla^S_{X}\E_i,\E_k\right\rangle Y(\tilde h(\E_i,\E_k))(p) \\
              &\overset{\eqref{diagontildeh}}{=}\sum_{i=1}^{2n-1}\lambda_i Y\left(\left\langle\nabla^S_{X}\E_i,\E_i\right\rangle \right)(p)+\sum_{i,k=1}^{2n-1}\left\langle\nabla^S_{X}\E_i,\E_k\right\rangle Y(\tilde h(\E_i,\E_k))(p) \\
              &\overset{\eqref{metricequationnablas}}{=}\sum_{i,k=1}^{2n-1}\left\langle\nabla^S_{X}\E_i,\E_k\right\rangle Y(\tilde h(\E_i,\E_k))(p) \\
                  &\overset{\eqref{metricequationnablas}}{=}-\sum_{i,k=1}^{2n-1}\left\langle\nabla^S_{X}\E_i,\E_k\right\rangle Y(\tilde h(\E_i,\E_k))(p)\\
                 &=0,
        \end{split}
    \end{equation*}
    the semi-last equality following by the symmetry of $\tilde h$. Finally,  
         \begin{equation*}
             \begin{split}
                  \sum_{i=1}^{2n-1}\Big(\tilde h(\nabla^S_X&\nabla^S_Y\E_i,\E_i)+\tilde h(\nabla^S_Y\E_i,\nabla^S_X\E_i)\Big)\\
                  &=\sum_{i,j=1}^{2n-1}\langle \nabla^S_X\nabla^S_Y\E_i,\E_j\rangle\tilde h (\E_i,\E_j)+\sum_{i,j,k=1}^{2n-1}\langle\nabla^S_Y\E_i,\E_j\rangle\langle\nabla^S_X\E_i,\E_k\rangle\tilde h(\E_j,\E_k)\\
          &\overset{\eqref{diagontildeh}}{=}\sum_{i=1}^{2n-1}\lambda_i\langle \nabla^S_X\nabla^S_Y\E_i,\E_i\rangle+\sum_{i,j=1}^{2n-1}\lambda_j\langle\nabla^S_Y\E_i,\E_j\rangle\langle\nabla^S_X\E_i,\E_j\rangle\\
          &\overset{\eqref{metricequationnablas}}{=}-\sum_{i=1}^{2n-1}\lambda_i\langle \nabla^S_Y\E_i,\nabla^S_X\E_i\rangle+\sum_{i,j=1}^{2n-1}\lambda_j\langle\nabla^S_Y\E_i,\E_j\rangle\langle\nabla^S_X\E_i,\E_j\rangle  \\
          &=\sum_{i,j=1}^{2n-1}\langle\nabla^S_Y\E_i,\E_j\rangle\langle\nabla^S_X\E_i,\E_j\rangle (\lambda_j-\lambda_i).
             \end{split}
         \end{equation*}
         From one hand, exchanging the indices in the previous equation, we get that
         \begin{equation}\label{iejinproofuno}
             \sum_{i,j=1}^{2n-1}\langle\nabla^S_Y\E_i,\E_j\rangle\langle\nabla^S_X\E_i,\E_j\rangle (\lambda_j-\lambda_i)=\sum_{i,j=1}^{2n-1}\langle\nabla^S_Y\E_j,\E_i\rangle\langle\nabla^S_X\E_j,\E_i\rangle (\lambda_i-\lambda_j).
         \end{equation}
         From the other hand, recalling \eqref{metricequationnablas}, we infer that
         \begin{equation}\label{iejinproofdue}
              \sum_{i,j=1}^{2n-1}\langle\nabla^S_Y\E_i,\E_j\rangle\langle\nabla^S_X\E_i,\E_j\rangle (\lambda_j-\lambda_i)=\sum_{i,j=1}^{2n-1}\langle\nabla^S_Y\E_j,\E_i\rangle\langle\nabla^S_X\E_j,\E_i\rangle (\lambda_j-\lambda_i).
         \end{equation}
           Therefore, combining \eqref{iejinproofuno} and \eqref{iejinproofdue}, \eqref{passaggioinproofdihessianaetracca} follows.
\end{proof}
\begin{proposition}\label{prophessianaetraccia}
     Let $X,Y\in\Gamma(\hhh TS)$. Then
     \begin{equation}\label{nablahtrace}
         \trace \nabla^S_X h(\cdot,\cdot)= XH
     \end{equation}
     and
     \begin{equation}\label{hesshtrace}
         \trace \hess^{\hhh,S}h(X,Y,\cdot,\cdot)=\hess^{\hhh,S}H (X,Y).
     \end{equation}
     \end{proposition}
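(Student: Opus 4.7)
The plan is to prove \eqref{nablahtrace} first and then to bootstrap it into \eqref{hesshtrace}. Since both identities are pointwise and intrinsic, I would fix an arbitrary $p\in S$ and evaluate both sides using a local orthonormal frame $\E_1,\ldots,\E_{2n-1}$ of $\hhh TS$ around $p$ which diagonalizes $\tilde h_p$ as in \eqref{diagontildeh}. The key algebraic fact to be repeatedly invoked is that, by the symmetrization identity \eqref{theh} together with the skew-symmetry of $C$, one has $h(A,B)+h(B,A)=2\tilde h(A,B)$ for any $A,B\in\Gamma(\hhh TS)$; in particular every sum of the form $\sum_i\bigl[h(U_i,\E_i)+h(\E_i,U_i)\bigr]$ collapses to $2\sum_i\tilde h(U_i,\E_i)$, eliminating the non-symmetric $\alpha C$-contribution.

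For \eqref{nablahtrace}, unfolding the definition of the covariant derivative applied to the $(2,0)$-tensor $h$ gives
\[\sum_{i=1}^{2n-1}\nabla^S_Xh(\E_i,\E_i)=X\!\left(\sum_{i=1}^{2n-1}h(\E_i,\E_i)\right)-\sum_{i=1}^{2n-1}\bigl[h(\nabla^S_X\E_i,\E_i)+h(\E_i,\nabla^S_X\E_i)\bigr].\]
The first summand equals $XH$ by \eqref{hvarieespressioni}, while the second one reduces to $2\sum_i\tilde h(\nabla^S_X\E_i,\E_i)$ and vanishes at $p$ by \eqref{comesetildehmetrica}. Since $p$ is arbitrary, \eqref{nablahtrace} follows.

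For \eqref{hesshtrace}, I would expand $\hess^{\hhh,S}h(X,Y,\E_i,\E_i)$ by applying the covariant derivative to the $(3,0)$-tensor $\nabla^Sh$, obtaining four terms. Summing the first two over $i$ and applying \eqref{nablahtrace} yields $X(YH)-(\nabla^S_XY)(H)=\hess^{\hhh,S}H(X,Y)$, which is already the right-hand side of \eqref{hesshtrace}. The remaining two terms contribute
\[-\sum_{i=1}^{2n-1}\bigl[\nabla^S_Yh(\nabla^S_X\E_i,\E_i)+\nabla^S_Yh(\E_i,\nabla^S_X\E_i)\bigr].\]
Unfolding each $\nabla^S_Yh$ via the covariant derivative and applying the symmetrization trick once more, this expression splits into the piece $-2\sum_iY(\tilde h(\nabla^S_X\E_i,\E_i))$, which vanishes at $p$ by \eqref{comesetildehmetricaboh}, and the piece $2\sum_i\bigl[\tilde h(\nabla^S_Y\nabla^S_X\E_i,\E_i)+\tilde h(\nabla^S_X\E_i,\nabla^S_Y\E_i)\bigr]$, which vanishes at $p$ by \eqref{passaggioinproofdihessianaetracca} applied with the roles of $X$ and $Y$ interchanged (together with the symmetry of $\tilde h$ in the second summand). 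This proves \eqref{hesshtrace} at $p$, hence everywhere.

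Overall the argument is essentially formal, and the only subtlety I anticipate is the careful bookkeeping needed to pass systematically from $h$ to $\tilde h$ in each slot: once this is done, all the nontrivial content is packaged into the three trace-vanishing identities \eqref{comesetildehmetrica}, \eqref{comesetildehmetricaboh} and \eqref{passaggioinproofdihessianaetracca} of \Cref{katofirst}.
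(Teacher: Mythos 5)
Your proposal is correct and takes essentially the same route as the paper: the same frame adapted as in \eqref{diagontildeh}, the symmetrization $h(A,B)+h(B,A)=2\tilde h(A,B)$ from \eqref{theh}, and the three trace-vanishing identities \eqref{comesetildehmetrica}, \eqref{comesetildehmetricaboh}, \eqref{passaggioinproofdihessianaetracca} of \Cref{katofirst} used exactly where the paper uses them. The only cosmetic difference is that you dispose of the first two terms of the Hessian expansion by invoking the already-proven \eqref{nablahtrace} (for $Y$ and for $\nabla^S_XY$), whereas the paper re-expands them and appeals once more to \eqref{comesetildehmetrica}--\eqref{comesetildehmetricaboh}; the content is identical.
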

     \begin{proof}
         Fix $X,Y$ as in the statement. Let $p\in S$. Being the trace operator independent of the choice of the orthonormal basis, we let  Let $\E_1,\ldots,\E_{2n+1}$ be as in \eqref{diagontildeh}.
         To prove \eqref{nablahtrace}, we observe that
         \begin{equation*}
             \trace \nabla^S_X h(\cdot,\cdot)=\sum_{i=1}^{2n-1}\nabla^S_Xh(\E_i,\E_i)=XH-2\sum_{i=1}^{2n-1}\tilde h(\nabla^S_X\E_i,\E_i)\overset{\eqref{comesetildehmetrica}}{=}XH.
         \end{equation*}         
         Notice that
\begin{equation}\label{hessianadihinproof}
             \hess^{\hhh,S}H(X,Y)=\nabla^S_X\nabla^S_Y H=\nabla^S_X(YH)=XYH-\nabla^S_X Y H.
         \end{equation}
         On the other hand, exploiting \Cref{katofirst},
               \begin{equation*}
             \begin{aligned}
             \trace \hess
                & =\sum_{i=1}^{2n-1}\nabla^S_X\nabla^S_Y h(\E_i,\E_i)\\
                 &=\sum_{i=1}^{2n-1}\nabla^S_X\left(Yh(\E_i,\E_i)-2\tilde h(\nabla^S_Y\E_i,\E_i)\right)\\
                 &=\sum_{i=1}^{2n-1}\Big(XYh(\E_i,\E_i)-\nabla^S_X Y h(\E_i,\E_i)-2Y\tilde h(\nabla^S_X\E_i,\E_i)\\
                 &\quad-2X\tilde h(\nabla^S_Y\E_i,\E_i)+2\tilde h(\nabla^S_{\nabla^S_X Y}\E_i,\E_i)+2\tilde h(\nabla^S_Y\nabla^S_X\E_i,\E_i)+2\tilde h(\nabla^S_Y\E_i,\nabla^S_X\E_i)\Big)\\
                 &\overset{\eqref{comesetildehmetrica}}{=}\sum_{i=1}^{2n-1}\Big(XYh(\E_i,\E_i)-\nabla^S_X Y h(\E_i,\E_i)-2Y\tilde h(\nabla^S_X\E_i,\E_i)\\
                 &\quad-2X\tilde h(\nabla^S_Y\E_i,\E_i)+2\tilde h(\nabla^S_Y\nabla^S_X\E_i,\E_i)+2\tilde h(\nabla^S_Y\E_i,\nabla^S_X\E_i)\Big)\\
                 &\overset{\eqref{comesetildehmetricaboh}}{=}\sum_{i=1}^{2n-1}\Big(XYh(\E_i,\E_i)-\nabla^S_X Y h(\E_i,\E_i)+2\tilde h(\nabla^S_Y\nabla^S_X\E_i,\E_i)+2\tilde h(\nabla^S_Y\E_i,\nabla^S_X\E_i)\Big)\\
                 &\overset{\eqref{passaggioinproofdihessianaetracca}}{=}\sum_{i=1}^{2n-1}\Big(XYh(\E_i,\E_i)-\nabla^S_X Y h(\E_i,\E_i)\Big)\\
                 &\overset{\eqref{hessianadihinproof}}{=}\hess^{\hhh,S}H(X,Y).
             \end{aligned}
         \end{equation*}
     \end{proof}
     
     \begin{proposition}\label{lapnormelapnonnorm}
       It holds that        \begin{equation}\label{primaconsdisimonsform}
             \frac{1}{2}\Delta^{\hhh,S}|\tilde h|^2=|\nabla^{\hhh,S}\tilde h|^2+\left\langle\tilde h,\Delta^{\hhh,S}\tilde h\right\rangle,
         \end{equation}
         where $\left\langle \cdot, \cdot \right\rangle$ is defined in \cref{sectanpseucon}.
         \begin{proof}
          Take $\E_1,\ldots,\E_{2n-1}$ as in \eqref{diagontildeh}.
             \begin{equation*}
             \begin{split}
                    |\nabla^{\hhh,S}\tilde h|^2
                    &=\sum_{i,j,k=1}^{2n-1}\left(\nabla^S_{\E_i}\tilde h(\E_j,\E_k)\right)^2\\
                 &=\sum_{i,j,k=1}^{2n-1}\left(\E_i\tilde h(\E_j,\E_k)-\tilde h(\nabla^S_{\E_i}\E_j,\E_k)-\tilde h(\E_j,\nabla^S_{\E_i}\E_k)\right)^2\\
                 &=\sum_{i,j,k=1}^{2n-1}\left(\E_i\tilde h(\E_j,\E_k)\right)^2+2\sum_{i,j,k=1}^{2n-1}\left(\tilde h(\nabla^S_{\E_i}\E_j,\E_k)\right)^2\\
                 &\quad-4\sum_{i,j,k=1}^{2n-1}\E_i\tilde h(\E_j,\E_k)\tilde h(\nabla^S_{\E_i}\E_j,\E_k)+2\sum_{i,j,k=1}^{2n-1}\tilde h(\nabla^S_{\E_i}\E_j,\E_k)\tilde h(\E_j,\nabla^S_{\E_i}\E_k)\\
                 &=\sum_{i,j,k=1}^{2n-1}\left(\E_i\tilde h(\E_j,\E_k)\right)^2+2\sum_{i,j,k=1}^{2n-1}\langle\nabla^S_{\E_i}\E_j,\E_k\rangle^2\tilde h(\E_k,\E_k)^2\\
                 &\quad-4\sum_{i,j,k=1}^{2n-1}\tilde h(\E_k,\E_k)\E_i\tilde h(\E_j,\E_k)\langle\nabla^S_{\E_i}\E_j,\E_k\rangle\\
                 &\quad+2\sum_{i,j,k=1}^{2n-1}\tilde h(\E_j,\E_j)\tilde h(\E_k,\E_k)\langle\nabla^S_{\E_i}\E_j,\E_k\rangle\langle\nabla^S_{\E_i}\E_k,\E_j\rangle\\
                    &\overset{\eqref{metricequationnablas}}{=}\sum_{i,j,k=1}^{2n-1}\left(\E_i\tilde h(\E_j,\E_k)\right)^2+2\sum_{i,j,k=1}^{2n-1}\langle\nabla^S_{\E_i}\E_j,\E_k\rangle^2\tilde h(\E_k,\E_k)^2\\
                 &\quad+4\sum_{i,j,k=1}^{2n-1}\tilde h(\E_j,\E_j)\E_i\tilde h(\E_j,\E_k)\langle\nabla^S_{\E_i}\E_j,\E_k\rangle-2\sum_{i,j,k=1}^{2n-1}\tilde h(\E_j,\E_j)\tilde h(\E_k,\E_k)\langle\nabla^S_{\E_i}\E_j,\E_k\rangle^2
             \end{split}
             \end{equation*}
             On the other hand,
             \begin{equation*}
                 \begin{split}
                     \sum_{j=1}^{2n-1}&\tilde h(\E_j,\E_j)\left(\Delta^{\hhh,S}\tilde h\right)(\E_j,\E_j)= \sum_{i,j=1}^{2n-1}\tilde h(\E_j,\E_j)\nabla^S_{\E_i}\nabla^S_{\E_i}\tilde h(\E_j,\E_j)\\
                     &=\sum_{i,j=1}^{2n-1}\tilde h(\E_j,\E_j)\nabla^S_{\E_i}\left(\E_i\tilde h(\E_j,\E_j)-2\tilde h(\nabla^S_{\E_i}\E_j,\E_j)\right)\\
                      &=\sum_{i,j=1}^{2n-1}\tilde h(\E_j,\E_j)\E_i\E_i\tilde h(\E_j,\E_j)-2\sum_{i,j=1}^{2n-1}\tilde h(\E_j,\E_j)\E_i\tilde h(\nabla^S_{\E_i}\E_j,\E_j)\\
                      &\quad-\sum_{i,j=1}^{2n-1}\tilde h(\E_j,\E_j)\nabla^S_{\E_i}\E_i\tilde h(\E_j,\E_j)+2\sum_{i,j=1}^{2n-1}\tilde h(\E_j,\E_j)\tilde h\left(\nabla^S_{\nabla^S_{\E_i}\E_i}\E_j,\E_j\right)\\
                      &\quad-\sum_{i,j=1}^{2n-1}\tilde h(\E_j,\E_j)\E_i\tilde h(\nabla^S_{\E_i}\E_j,\E_j)+2\sum_{i,j=1}^{2n-1}\tilde h(\E_j,\E_j)\tilde h(\nabla^S_{\E_i}\nabla^S_{\E_i}\E_j,\E_j)\\
                      &\quad-\sum_{i,j=1}^{2n-1}\tilde h(\E_j,\E_j)\E_i\tilde h(\E_j,\nabla^S_{\E_i}\E_j)+2\sum_{i,j=1}^{2n-1}\tilde h(\E_j,\E_j)\tilde h(\nabla^S_{\E_i}\E_j,\nabla^S_{\E_i}\E_j)\\
                      &\overset{\eqref{comesetildehmetrica}}{=}\sum_{j=1}^{2n-1}\tilde h(\E_j,\E_j)\Delta^{\hhh,S}(\tilde h(\E_j,\E_j))-4\sum_{i,j=1}^{2n-1}\tilde h(\E_j,\E_j)\E_i\tilde h(\nabla^S_{\E_i}\E_j,\E_j)\\
                      &\quad+2\sum_{i,j=1}^{2n-1}\tilde h(\E_j,\E_j)\tilde h(\nabla^S_{\E_i}\nabla^S_{\E_i}\E_j,\E_j)+2\sum_{i,j=1}^{2n-1}\tilde h(\E_j,\E_j)\tilde h(\nabla^S_{\E_i}\E_j,\nabla^S_{\E_i}\E_j).
                 \end{split}
             \end{equation*}
             Notice that
             \begin{equation*}
             \begin{split}
                 -4\sum_{i,j=1}^{2n-1}&\tilde h(\E_j,\E_j)\E_i\tilde h(\nabla^S_{\E_i}\E_j,\E_j)=-4\sum_{i,j,k=1}^{2n-1}\tilde h(\E_j,\E_j)\E_i\left(\langle\nabla^S_{\E_i}\E_j,\E_k\rangle\tilde h(\E_k,\E_j)\right)\\
                 &\overset{\eqref{diagontildeh}}{=}-4\sum_{i,j=1}^{2n-1}\tilde h(\E_j,\E_j)^2\E_i\left(\langle\nabla^S_{\E_i}\E_j,\E_j\rangle\right)-4\sum_{i,j,k=1}^{2n-1}\tilde h(\E_j,\E_j)\E_i \tilde h(\E_k,\E_j)\langle\nabla^S_{\E_i}\E_j,\E_k\rangle\\
                 &\overset{\eqref{metricequationnablas}}{=}-4\sum_{i,j,k=1}^{2n-1}\tilde h(\E_j,\E_j)\E_i \tilde h(\E_j,\E_k)\langle\nabla^S_{\E_i}\E_j,\E_k\rangle.
             \end{split}
                        \end{equation*}
                   Moreover, 
               \begin{equation*}
                \begin{split}
                2\sum_{i,j=1}^{2n-1}\tilde h(\E_j,\E_j)\tilde h(\nabla^S_{\E_i}\nabla^S_{\E_i}\E_j,\E_j)&\overset{\eqref{diagontildeh}}{=}   2\sum_{i,j=1}^{2n-1}\tilde h(\E_j,\E_j)^2\langle\nabla^S_{\E_i}\nabla^S_{\E_i}\E_j,\E_j\rangle\\
                             &\overset{\eqref{metricequationnablas}}{=}-2\sum_{i,j=1}^{2n-1}\tilde h(\E_j,\E_j)^2\langle\nabla^S_{\E_i}\E_j,\nabla^S_{\E_i}\E_j\rangle\\
                             &=-2\sum_{i,j,k=1}^{2n-1}\tilde h(\E_j,\E_j)^2\langle\nabla^S_{\E_i}\E_j,\E_k\rangle^2\\
                             &\overset{\eqref{metricequationnablas}}{=}-2\sum_{i,j,k=1}^{2n-1}\tilde h(\E_k,\E_k)^2\langle\nabla^S_{\E_i}\E_j,\E_k\rangle^2.\\
                            \end{split}
                        \end{equation*}
                        Finally,
                        \begin{equation*}
           \begin{split}           2\sum_{i,j=1}^{2n-1}\tilde h(\E_j,\E_j)\tilde h(\nabla^S_{\E_i}\E_j,\nabla^S_{\E_i}\E_j)&=2\sum_{i,j,k=1}^{2n-1}\tilde h(\E_j,\E_j)\tilde h(\E_k,\E_k)\langle\nabla^S_{\E_i}\E_j,\E_k\rangle^2.
                            \end{split}
                        \end{equation*}
    Therefore we infer that
    \begin{equation}\label{moltoutilepersimonsmoduloeq}
        |\nabla^{\hhh,S}\tilde h|^2+\sum_{j=1}^{2n-1}\tilde h(\E_j,\E_j)\left(\Delta^{\hhh,S}\tilde h\right)(\E_j,\E_j)=\sum_{i,j,k=1}^{2n-1}\left(\E_i\tilde h(\E_j,\E_k)\right)^2+\sum_{j=1}^{2n-1}\tilde h(\E_j,\E_j)\Delta^{\hhh,S}(\tilde h(\E_j,\E_j)).
    \end{equation}
    We conclude noticing that
    \begin{equation*}
        \begin{split}
            \frac{1}{2}\Delta^{\hhh,S}|\tilde h|^2
            &=\frac{1}{2}\sum_{i,j,k=1}^{2n-1}\nabla^S_{\E_i}\nabla^S_{\E_i}\left(\tilde h(\E_j,\E_k)^2\right)\\
            &=\sum_{i,j,k=1}^{2n-1}\nabla^S_{\E_i}\left(\tilde h(\E_j,\E_k)\E_i\tilde  h(\E_j,\E_k)\right)\\
            &=\sum_{i,j,k=1}^{2n-1}\left(\E_i\tilde h(\E_j,\E_k)\right)^2+\sum_{j=1}^{2n-1}\tilde h(\E_j,\E_j)\Delta^{\hhh,S}(\tilde h(\E_j,\E_j))\\
            &\overset{\eqref{moltoutilepersimonsmoduloeq}}{=}|\nabla^{\hhh,S}\tilde h|^2+\sum_{j=1}^{2n-1}\tilde h(\E_j,\E_j)\left(\Delta^{\hhh,S}\tilde h\right)(\E_j,\E_j).
        \end{split}
    \end{equation*}
         \end{proof}
     \end{proposition}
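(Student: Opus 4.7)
My plan is to derive \eqref{primaconsdisimonsform} by iterating a Leibniz-type identity for the pairing $\langle\cdot,\cdot\rangle$ under covariant differentiation. Specifically, since the tangent pseudohermitian connection $\nabla^S$ is metric (equation \eqref{metricequationnablas}) and satisfies the product rule \eqref{leibnizpertensori}, one should first verify that for any horizontal $(p,0)$-tensor fields $T,U$ and any $X\in\Gamma(TS)$
\begin{equation*}
    X\langle T,U\rangle=\langle\nabla^S_X T,U\rangle+\langle T,\nabla^S_X U\rangle.
\end{equation*}
Expanding the contraction in a local orthonormal frame $\En_1,\ldots,\En_{2n-1}$ of $\hhh TS$ and using that the coefficients $\langle\nabla^S_X\En_k,\En_l\rangle$ are skew-symmetric in $k,l$ by \eqref{metricequationnablas}, the frame-dependent correction terms cancel in pairs, exactly as in the Riemannian setting.

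Specialising to $T=U=\tilde h$ yields $X|\tilde h|^2=2\langle\nabla^S_X\tilde h,\tilde h\rangle$ for every $X\in\Gamma(TS)$. Iterating once more, for $X,Y\in\Gamma(\hhh TS)$,
\begin{equation*}
    XY|\tilde h|^2=2\langle\nabla^S_X\nabla^S_Y\tilde h,\tilde h\rangle+2\langle\nabla^S_Y\tilde h,\nabla^S_X\tilde h\rangle,
\end{equation*}
while $(\nabla^S_XY)|\tilde h|^2=2\langle\nabla^S_{\nabla^S_X Y}\tilde h,\tilde h\rangle$. Subtracting and recognising $\hess^{\hhh,S}\tilde h(X,Y,\cdot,\cdot)=\nabla^S_X\nabla^S_Y\tilde h-\nabla^S_{\nabla^S_X Y}\tilde h$ gives the pointwise identity
\begin{equation*}
    \hess^{\hhh,S}|\tilde h|^2(X,Y)=2\langle\hess^{\hhh,S}\tilde h(X,Y,\cdot,\cdot),\tilde h\rangle+2\langle\nabla^S_Y\tilde h,\nabla^S_X\tilde h\rangle.
\end{equation*}
Tracing both sides against an orthonormal frame of $\hhh TS$ turns the first right-hand term into $2\langle\Delta^{\hhh,S}\tilde h,\tilde h\rangle$ and the second into $2|\nabla^{\hhh,S}\tilde h|^2$, and a division by $2$ produces \eqref{primaconsdisimonsform}.

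The only non-routine step is the initial Leibniz rule for $\langle\cdot,\cdot\rangle$: here the lack of torsion-freeness of $\nabla^S$ could be a concern, but in fact only the metric property is invoked, and torsion plays no role whatsoever. As a sanity check, or as a fully computational alternative, I would work at a fixed $p\in S$ in the frame $\E_1,\ldots,\E_{2n-1}$ diagonalising $\tilde h_p$ provided by \eqref{diagontildeh}, and expand each of the three quantities $\tfrac{1}{2}\Delta^{\hhh,S}|\tilde h|^2$, $|\nabla^{\hhh,S}\tilde h|^2$ and $\langle\tilde h,\Delta^{\hhh,S}\tilde h\rangle$ directly from the definitions, systematically exploiting \eqref{comesetildehmetrica} and \eqref{comesetildehmetricaboh} of \Cref{katofirst} to kill the cross terms of the form $\tilde h(\nabla^S_X\En_i,\En_j)$ that would otherwise obstruct the comparison. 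This second route is the bookkeeping-heavier one, and it is where the main effort would concentrate if one prefers to avoid any appeal to the abstract Leibniz formalism.
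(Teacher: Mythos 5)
Your argument is correct, but it follows a genuinely different route from the paper. You derive \eqref{primaconsdisimonsform} abstractly: metricity \eqref{metricequationnablas} gives the Leibniz rule $X\langle T,U\rangle=\langle\nabla^S_XT,U\rangle+\langle T,\nabla^S_XU\rangle$ for horizontal tensors (the frame corrections cancel because $\langle\nabla^S_X\E_k,\E_l\rangle$ is skew in $k,l$), and iterating it, together with the observation that the paper's $\hess^{\hhh,S}\tilde h(X,Y,\cdot,\cdot)$ equals $\nabla^S_X(\nabla^S_Y\tilde h)-\nabla^S_{\nabla^S_XY}\tilde h$ (as the expansion in the proof of \eqref{commutatensoreconriemannpersimons} confirms), yields the pointwise Bochner-type identity whose trace is exactly \eqref{primaconsdisimonsform}; you correctly note that torsion of $\nabla^S$ never enters, only metric compatibility. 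The paper instead proves the identity by brute force at a point, expanding $|\nabla^{\hhh,S}\tilde h|^2$, $\langle\tilde h,\Delta^{\hhh,S}\tilde h\rangle$ and $\frac12\Delta^{\hhh,S}|\tilde h|^2$ in the eigenframe \eqref{diagontildeh} of $\tilde h_p$ and cancelling the connection-coefficient terms by hand (this is essentially the ``fully computational alternative'' you sketch as a sanity check, though the paper leans on \eqref{diagontildeh} and \eqref{metricequationnablas} rather than on \eqref{comesetildehmetricaboh}). Your route is shorter, frame-free, and makes transparent why the lack of torsion-freeness is irrelevant here, at the price of first establishing the Leibniz rule for the contraction $\langle\cdot,\cdot\rangle$ of \Cref{sectanpseucon}; the paper's computation avoids any such general lemma and stays entirely within the elementary identities it has already recorded, which is in the spirit of its other \emph{ad hoc} frame computations. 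Either proof is complete and they establish the same statement.
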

     \begin{proposition}\label{laplacianodihmodificato}
       Let $\En_1,\ldots, \En_{2n-1}$ be any local orthonormal frame of $\hhh TS$.  It holds that
         \begin{equation}\label{migliorie1}
              \langle \nabla^S|\tilde h|^2,J(\vh)\rangle=4\alpha \left|\nabla_{J(\vh)}\vh  \right|^2-4\alpha|\tilde h|^2+2 \sum_{j,k=1}^{2n-1} \tilde h (\En_j,\En_k),\langle\nabla_{\En_j}\nabla_{J(\vh)}\vh,\En_k \rangle.
         \end{equation} 
     \end{proposition}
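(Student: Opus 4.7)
The plan is to start from the metricity of $\nabla^S$ on $(2,0)$-tensors (which follows from \eqref{metricequationnablas} by a Leibniz expansion on each factor), giving
\[
\langle \nabla^S|\tilde h|^2, J(\vh)\rangle \;=\; J(\vh)|\tilde h|^2 \;=\; 2\sum_{j,k=1}^{2n-1}\tilde h(\En_j,\En_k)\,\nabla^S_{J(\vh)}\tilde h(\En_j,\En_k),
\]
and then to apply the Codazzi identity for $\tilde h$ from \Cref{codazzipertildehpropositionnnnnnnnnnnnnn} with $(X,Y,Z)=(\En_j,J(\vh),\En_k)$ in order to swap the roles of $J(\vh)$ and $\En_j$. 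Since $J$ is a $\hhh$-isometry one computes $C(\En_j,J(\vh))=\langle J(\En_j),J(\vh)\rangle=\langle\En_j,\vh\rangle=0$ and similarly $C(J(\vh),\En_k)=0$, whereas $C(\vh,J(\vh))=1$. Consequently the Codazzi equation collapses to
\[
\nabla^S_{J(\vh)}\tilde h(\En_j,\En_k)=\nabla^S_{\En_j}\tilde h(J(\vh),\En_k)+(J(\vh)\alpha)C(\En_j,\En_k)+\alpha\langle J(\vh),\En_j\rangle h(J(\vh),\En_k)-\alpha h(\En_j,\En_k).
\]

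Contracting against $\tilde h(\En_j,\En_k)$, the term with $(J(\vh)\alpha)C$ dies by pairing the symmetric $\tilde h$ with the skew-symmetric $C$. Writing $h=\tilde h-\alpha C$ via \eqref{theh} produces $-\alpha\sum_{j,k}\tilde h(\En_j,\En_k)h(\En_j,\En_k)=-\alpha|\tilde h|^2$. For the mixed term I would use that, for $X\in\Gamma(\hhh TS)$, $h(J(\vh),X)=\tilde h(J(\vh),X)=\langle\nabla_{J(\vh)}\vh,X\rangle$ (since $C(J(\vh),X)=-\langle\vh,X\rangle=0$), so the identity $\sum_j\langle J(\vh),\En_j\rangle\tilde h(\En_j,\En_k)=\tilde h(J(\vh),\En_k)$ turns this contribution into $\alpha\sum_k\langle\nabla_{J(\vh)}\vh,\En_k\rangle^2=\alpha|\nabla_{J(\vh)}\vh|^2$. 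Finally, a Leibniz expansion of $\nabla^S_{\En_j}\tilde h(J(\vh),\En_k)$ using metricity of $\nabla$ and $\nabla_{\En_j}\En_k-\nabla^S_{\En_j}\En_k=-h(\En_j,\En_k)\vh$ (absorbed by $\langle\nabla_{J(\vh)}\vh,\vh\rangle=0$) yields
\[
\nabla^S_{\En_j}\tilde h(J(\vh),\En_k)=\langle\nabla_{\En_j}\nabla_{J(\vh)}\vh,\En_k\rangle-\tilde h(\nabla^S_{\En_j}J(\vh),\En_k),
\]
isolating the target term and leaving the residual $A_2:=\sum_{j,k}\tilde h(\En_j,\En_k)\tilde h(\nabla^S_{\En_j}J(\vh),\En_k)$ to compute.

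To handle $A_2$ I would derive the explicit formula
\[
\nabla^S_{\En_j}J(\vh)=\pi(J(\tilde A\En_j))+\alpha\En_j-\alpha\langle\En_j,J(\vh)\rangle J(\vh),
\]
which follows from $\nabla J=J\nabla$ (cf.\ \eqref{jcommutaeq}), the definition of $\tilde A$, and the decomposition $\hhh TS=\hhh'TS\oplus\spann J(\vh)$ (giving $J(J'(\En_j))=-\En_j+\langle\En_j,J(\vh)\rangle J(\vh)$). Substituting and collapsing the $k$-sums via $\sum_k\tilde h(X,\En_k)\tilde h(Y,\En_k)=\langle\tilde A X,\tilde A Y\rangle$ decomposes $A_2$ into three terms: $\sum_j\langle\tilde A^2\En_j,J(\tilde A\En_j)\rangle$, $\alpha|\tilde h|^2$, and $-\alpha|\nabla_{J(\vh)}\vh|^2$.

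The main obstacle is to show the vanishing of $\sum_j\langle\tilde A^2\En_j,J(\tilde A\En_j)\rangle$. I would observe that this quantity is the trace of the endomorphism $Y\mapsto\tilde A^2\pi(J(\tilde A Y))$ of $\hhh TS$, hence independent of the orthonormal frame. Evaluated in any frame $\E_1,\ldots,\E_{2n-1}$ diagonalizing $\tilde h$ (as in \eqref{diagontildeh}), each summand reduces to $\lambda_j^3\langle\E_j,J(\E_j)\rangle=0$ by the skew property \eqref{jeskew}. Collecting the pieces yields $\tfrac{1}{2}J(\vh)|\tilde h|^2=\sum_{j,k}\tilde h(\En_j,\En_k)\langle\nabla_{\En_j}\nabla_{J(\vh)}\vh,\En_k\rangle+2\alpha|\nabla_{J(\vh)}\vh|^2-2\alpha|\tilde h|^2$, which, multiplied by $2$, is the stated identity. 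The delicate step is precisely this algebraic trace vanishing: it is what converts the geometric residual $A_2$ into the clean coefficient $4\alpha(|\nabla_{J(\vh)}\vh|^2-|\tilde h|^2)$ on the right-hand side.
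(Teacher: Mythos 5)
Your proof is correct, but it takes a genuinely different route from the paper's. The paper works pointwise in a frame diagonalizing $\tilde h$, writes $J(\vh)\tilde h(\E_j,\E_j)=J(\vh)\langle\nabla_{\E_j}\vh,\E_j\rangle$, and commutes the two ambient covariant derivatives directly using the flatness $R\equiv 0$ together with the torsion relation \eqref{pseudotorsion}, evaluating the resulting bracket terms through \eqref{commutatoreesecondaforma} and \eqref{hnonsimejjej}. You instead stay covariant in an arbitrary orthonormal frame: you interchange $\nabla^S_{J(\vh)}\tilde h$ and $\nabla^S_{\En_j}\tilde h$ via the Codazzi equation for $\tilde h$ of \Cref{codazzipertildehpropositionnnnnnnnnnnnnn} (which packages the same flatness information one level up, and indeed collapses as you claim since $C(\En_j,J(\vh))=C(J(\vh),\En_k)=0$ and $C(\vh,J(\vh))=1$), and you then have to compute the residual $A_2=\sum_{j,k}\tilde h(\En_j,\En_k)\tilde h(\nabla^S_{\En_j}J(\vh),\En_k)$. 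Your formula $\nabla^S_{\En_j}J(\vh)=\pi(J(\tilde A\En_j))+\alpha\En_j-\alpha\langle\En_j,J(\vh)\rangle J(\vh)$ is correct (it follows from \eqref{jcommutaeq} and $J(J'(\En_j))=-\En_j+\langle\En_j,J(\vh)\rangle J(\vh)$), and the frame-independent trace argument for $\sum_j\langle\tilde A^2\En_j,J(\tilde A\En_j)\rangle=0$ (a symmetric operator cubed paired with the skew $J$, evaluated in an eigenframe of $\tilde h_p$) is sound; the bookkeeping then matches, since the Codazzi contraction contributes $\alpha|\nabla_{J(\vh)}\vh|^2-\alpha|\tilde h|^2$ and $-A_2$ contributes the same amount again, giving the coefficients $4\alpha$ after doubling. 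What your approach buys is a derivation valid verbatim in any frame and a transparent identification of where the $\alpha$-terms originate (the torsional terms of the Codazzi equation and the failure of $J(\vh)$ to be $\nabla^S$-parallel); what the paper's approach buys is brevity, reusing the already-established pointwise identities \eqref{commutatoreesecondaforma} and \eqref{hnonsimejjej} in the diagonalizing frame instead of your explicit computation of $\nabla^S_{\En_j}J(\vh)$ and the trace lemma.
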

     \begin{proof}
    Take $\E_1,\ldots,\E_{2n-1}$ as in \eqref{diagontildeh}. Then
          \begin{equation*}
              \begin{split}
                   \langle \nabla^S|\tilde h|^2,J(\vh)\rangle&=2\sum_{j,k=1}^{2n-1}\tilde h(\E_j,\E_k)J(\vh)\left(\tilde h(\E_j,\E_k)\right)\\
                   &\overset{\eqref{diagontildeh}}{=}2\sum_{j=1}^{2n-1}\tilde h(\E_j,\E_j)J(\vh)\left(\tilde h(\E_j,\E_j)\right)\\
                   &=2\sum_{j=1}^{2n-1}\tilde h(\E_j,\E_j)\langle\nabla_{J(\vh)}\nabla_{\E_j}\vh,\E_j\rangle+2\sum_{j=1}^{2n-1}\tilde h(\E_j,\E_j)\langle\nabla_{\E_j}\vh,\nabla_{J(\vh)}\E_j\rangle\\
                   &\overset{R\equiv 0,\,\eqref{pseudotorsion}}{=}2\sum_{j=1}^{2n-1}\tilde h(\E_j,\E_j)\langle\nabla_{\E_j}\nabla_{J(\vh)}\vh,\E_j\rangle+2\sum_{j=1}^{2n-1}\tilde h(\E_j,\E_j)\langle\nabla_{[J(\vh),\E_j]}\vh,\E_j\rangle\\
                   &\quad+2\sum_{j=1}^{2n-1}\tilde h(\E_j,\E_j)\langle\nabla_{\E_j}\vh,\nabla_{\E_j}J(\vh)\rangle+2\sum_{j=1}^{2n-1}\tilde h(\E_j,\E_j)\langle\nabla_{\E_j}\vh,[J(\vh),\E_j]\rangle\\
                   &\overset{\eqref{jcommutaeq},\eqref{inhsiiiiiiiii}}{=}2\sum_{j,k=1}^{2n-1}\tilde h(\E_j,\E_k)\langle\nabla_{\E_j}\nabla_{J(\vh)}\vh,\E_k\rangle+4\sum_{j=1}^{2n-1}\tilde h(\E_j,\E_j)\tilde h([J(\vh),\E_j],\E_j).
              \end{split}
          \end{equation*}
        We conclude noticing that
        \begin{equation*}
            \begin{split}
            4\sum_{j=1}^{2n-1}\tilde h(\E_j,\E_j)\tilde h([J(\vh),\E_j],\E_j)&\overset{\eqref{diagontildeh}}{=}4\sum_{j=1}^{2n-1}\tilde h(\E_j,\E_j)^2\langle [J(\vh),\E_j],\E_j\rangle\\
                &\overset{\eqref{commutatoreesecondaforma}}{=}4\sum_{j=1}^{2n-1}\tilde h(\E_j,\E_j)^2 h(\E_j,\pi(J(\E_j)))\\
                &\overset{\eqref{hnonsimejjej}}{=}-4\alpha\sum_{j=1}^{2n-1}\tilde h(\E_j,\E_j)^2+4\alpha\sum_{j=1}^{2n-1} \langle J(\vh),\E_j\rangle^2 \tilde h(\E_j,\E_j)^2\\
                &=-4\alpha|\tilde h|^2+4\alpha\left|\nabla_{J(\vh)}\vh \right|^2.
            \end{split}
        \end{equation*}
     \end{proof}

 \section{Simons formulas and Kato inequalities}\label{secsimkatnew}
\subsection{The full Simons identity}
This section constitutes the core of the paper. Namely, we establish Simons formulas and Kato inequalities for minimal hypersurfaces in $\hh^n$. 
First, we provide the proof \Cref{newnewfullsimonsintro}, establishing a full Simons identity for the $(2,0)$-horizontal tensor field $\Delta^{\hhh,S} h$ associated with a minimal hypersurface and generalizing its Celebrated Riemannian counterpart (cf. \cite{MR233295}) to the sub-Riemannian setting. Namely, we show that
\begin{equation}\label{simonsformula}
         \begin{split}
              \Delta^{\hhh,S}h(X,Y)&=-qh(X,Y)+8\alpha^2h(X,Y)\\
              &\quad+4\hess^{\hhh,S}\alpha(\pi(J(X)),Y)+4\hess^{\hhh,S}\alpha(X,\pi(J(Y)))\\
             &\quad+\Big(16\alpha\pi(J(X))\alpha-8\alpha^2h(X,J(\vh))+4\left(\nabla_X\vh\right)\alpha\Big)\langle Y,J(\vh)\rangle\\
&\quad-2X\alpha h(Y,J(\vh))-2Y\alpha h( X,J(\vh))\\
             &\quad+2\alpha h(Y,\nabla_{\pi(J(X))}\vh)-2\alpha  \langle\nabla_X\nabla_{J(\vh)}\vh,Y\rangle-4\alpha^2h(\pi(J(X)),\pi(J(Y)))\\
              &\quad+2\alpha\left\langle J\left(\nabla _X\vh\right),\nabla_Y\vh\right\rangle
         \end{split}
     \end{equation}
for any $X,Y \in\Gamma(\hhh TS)$.
   \begin{proof}[Proof of \Cref{newnewfullsimonsintro}]
    Let $\En_1,\ldots, \En_{2n-1}$ be a local orthonormal frame of $\hhh TS$. Let $h_\s$ and $C_\vh$ be the $(1,0)$-tensor fields defined by  $h_\s(Z)=h(\s,Z)$ and $C_\vh (Z)=C(Z,\vh)$
    for any $Z\in\Gamma(\hhh T S)$. Then
  \begin{equation*}
        \begin{split}
\Delta^{\hhh,S}
&=\sum_{i=1}^{2n-1}\nabla^S_{\En_i}(\nabla^S_{\En_i}h(X,Y))\\
&\overset{\eqref{codazzipersimons}}{=}\sum_{i=1}^{2n-1}\nabla^S_{\En_i}(\nabla^S_{X}h(\En_i,Y)+2C(X,\En_i)h_\s (Y))\\
&=\sum_{i=1}^{2n-1}\hess^{\hhh,S}h(\En_i,X,\En_i,Y)+\underbrace{\sum_{i=1}^{2n-1}\nabla^S_{\En_i}(2C(X,\En_i)h_\s(Y))}_{\mathrm I}\\
&\overset{\eqref{commutatensoreconriemannpersimons}}{=}\sum_{i=1}^{2n-1}\hess^{\hhh,S}h(X,\En_i,\En_i,Y)+\underbrace{\sum_{i=1}^{2n-1}h(R^S(X,\En_i)\En_i,Y)+\sum_{i=1}^{2n-1}h(\En_i,R^S(X,\En_i)Y)}_{\mathrm {II}}\\
&\quad+\underbrace{\sum_{i=1}^{2n-1}2C(X,\En_i)\nabla^S_\s h(\En_i,Y)}_{\mathrm {III}}+\,\mathrm{I}\\
&=\sum_{i=1}^{2n-1}\nabla^S_X(\nabla^S_{\En_i}h(\En_i,Y))+\mathrm I+\mathrm {II}+ \mathrm{III}\\
&\overset{\eqref{simmetryandnabla}}{=}\sum_{i=1}^{2n-1}\nabla^S_X(\nabla^S_{\En_i}h(Y,\En_i))+\underbrace{\sum_{i=1}^{2n-1}\nabla^S_X(2\En_i\alpha C(Y,\En_i))}_{\mathrm{IV}}+\underbrace{\sum_{i=1}^{2n-1}\nabla^S_X(2\alpha C_\vh(\En_i) h(\En_i,Y))}_{\mathrm{V}}\\
&\quad\underbrace{-\sum_{i=1}^{2n-1}\nabla^S_X(2\alpha C_\vh(Y)h(\En_i,\En_i))}_{\mathrm{VI}}+\mathrm I+\mathrm {II}+ \mathrm{III}\\
&\overset{\eqref{codazzipersimons}}{=}\underbrace{\sum_{i=1}^{2n-1}\nabla^S_X(\nabla^S_{Y}h(\En_i,\En_i))}_{\mathrm{VII}}+\underbrace{\sum_{i=1}^{2n-1}\nabla^S_X(2C(Y,\En_i)h_\s(\En_i))}_{\mathrm{VIII}}+\mathrm I+\mathrm {II}+ \mathrm{III}+\mathrm{IV}+\mathrm{V}+\mathrm{VI}\\
&=\mathrm I+\mathrm {II}+ \mathrm{III}+\mathrm{IV}+\mathrm{V}+\mathrm{VI}+\mathrm{VII}+\mathrm{VIII}.
        \end{split}
    \end{equation*}
    We compute $\mathrm{I},\ldots,\mathrm{VIII}$. Let $\Es_1,\ldots,\Es_{2n-1}$ be a local orthonormal frame of $\hhh TS$ such that
    \begin{equation}\label{baseadattatapersimons}
        \Es_n=J(\vh)\qquad\text{and}\qquad J(\Es_1)=\Es_{n+1},\ldots,J(\Es_{n-1})=\Es_{2n-1}.
    \end{equation}
    
    \textbf{Computation of $\mathrm{I}$}.        
    \begin{equation*}
        \begin{split}
            \mathrm{I}
            &=\sum_{i=1}^{2n-1}\nabla^S_{\Es_i}(2C(X,\Es_i)h_\s(Y))\\
            &\overset{\eqref{leibnizpertensori}}{=}\sum_{i=1}^{2n-1}2h(\s,Y)\nabla^S_{\Es_i}C(X,\Es_i)+\sum_{i=1}^{2n-1}2C(X,\Es_i)\nabla^S_{\Es_i}h_\s(Y)\\
            &\overset{\eqref{commutatortensorpersimons},\eqref{proiezionepersimons}}{=}\sum_{i=1}^{2n-1}2h(\s,Y)C(\Es_i,\vh)h(\Es_i,X)-\sum_{i=1}^{2n-1}2h(\s,Y)C(X,\vh)h(\Es_i,\Es_i)+2 \nabla^S_{\pi(J(X))}h_\s(Y)\\
            &\overset{H\equiv 0}{=}\sum_{i=1}^{2n-1}2h(\s,Y)C(\Es_i,\vh)h(\Es_i,X)+2 \nabla^S_{\pi(J(X))}h_\s(Y)\\
            &\overset{\eqref{baseadattatapersimons}}{=}-2h(\s,Y)h(J(\vh),X)+2 \nabla^S_{\pi(J(X))}h_\s(Y).
        \end{split}
    \end{equation*}
    Notice that
    \begin{equation*}
        \begin{split}
-2h(\s,Y)h(J(\vh),X)\overset{\eqref{comecommutah}}{=}-2h(\s,Y)h(X,J(\vh))\overset{\eqref{accaesse}}{=}-2Y\alpha h(X,J(\vh))-4\alpha^2h(X,J(\vh))\langle Y,J(\vh)\rangle.
        \end{split}
    \end{equation*}
    On the other hand,
    \begin{equation*}
        \begin{split}
            2 \nabla^S_{\pi(J(X))}h_\s(Y)&=2\pi(J(X))(h(\s,Y))-2h(\s,\nabla^S_{\pi(J(X))}Y)\\
            &\overset{\eqref{accaesse}}{=}2\pi(J(X))(Y\alpha+2\alpha^2\langle Y,J(\vh)\rangle-2\nabla^S_{\pi(J(X))}Y\alpha-4\alpha^2\langle\nabla^S_{\pi(J(X))}Y,J(\vh)\rangle\\
            &=2\hess^{\hhh,S}\alpha(\pi(J(X)),Y)+8\alpha\pi(J(X))\alpha\langle Y,J(\vh)\rangle\\
            &\quad+4\alpha^2\pi(J(X))\langle Y,J(\vh)\rangle-4\alpha^2\langle\nabla_{\pi(J(X))}Y,J(\vh)\rangle\\
            &=2\hess^{\hhh,S}\alpha(\pi(J(X)),Y)+8\alpha\pi(J(X))\alpha\langle Y,J(\vh)\rangle+4\alpha^2\langle Y,\nabla_{\pi(J(X))}J(\vh)\rangle\\
&\overset{\eqref{jcommutaeq}}{=}2\hess^{\hhh,S}\alpha(\pi(J(X)),Y)+8\alpha\pi(J(X))\alpha\langle Y,J(\vh)\rangle-4\alpha^2\left\langle J(Y),\nabla_{\pi(J(X))}\vh\right\rangle\\
&=2\hess^{\hhh,S}\alpha(\pi(J(X)),Y)+8\alpha\pi(J(X))\alpha\langle Y,J(\vh)\rangle-4\alpha^2h(\pi(J(X)),\pi(J(Y))).
        \end{split}
    \end{equation*}
    In conclusion,
    \begin{equation*}
        \begin{split}
            \mathrm{I}&=2\hess^{\hhh,S}\alpha(\pi(J(X)),Y)+8\alpha\pi(J(X))\alpha\langle Y,J(\vh)\rangle-4\alpha^2h(\pi(J(X)),\pi(J(Y)))\\
            &\quad-2Y\alpha h(X,J(\vh))-4\alpha^2h(X,J(\vh))\langle Y,J(\vh)\rangle.
        \end{split}
    \end{equation*}
     \textbf{Computation of $\mathrm{VI}$.}
     \begin{equation*}
     \begin{split}
          \mathrm{VI}&=-\sum_{i=1}^{2n-1}\nabla^S_X(2\alpha C_\vh(Y)h(\En_i,\En_i))\\
          &=-\sum_{i=1}^{2n-1}\nabla_{X}^S(2\alpha C_\vh(Y))h(\En_i,\En_i)-2\alpha C(Y,\vh)\sum_{i=1}^{2n-1}\nabla^S_Xh(\En_i,\En_i)\\
          &=-\nabla^S_{X}(2\alpha C_\vh(Y))H-2\alpha C(Y,\vh)\trace \nabla^S_X h(\cdot,\cdot)\\
          &\overset{\eqref{nablahtrace}}{=}-\nabla_{X}^S(2\alpha C_\vh(Y))H-2\alpha C(Y,\vh)XH\\
          &\overset{H\equiv 0}{=}0.
     \end{split}
     \end{equation*}
       \textbf{Computation of $\mathrm{VII}$.}
       Thanks to \Cref{prophessianaetraccia}, we infer that
     \begin{equation*}
         \mathrm{VII}=\trace \hess^{\hhh,S}h(X,Y,\cdot,\cdot)=\hess^{\hhh,S}H(X,Y)\overset{H\equiv 0}{=}0.
     \end{equation*}
     \textbf{Computation of $\mathrm{VIII}$.}
     \begin{equation*}
         \begin{split}
             \mathrm{VIII}&=\trace \nabla^S_X(2C(Y,\cdot)h_\s(\cdot))\\
             &=\sum_{i=1}^{2n-1}\nabla^S_X(2C(Y,\Es_i)h_\s(\Es_i))\\
             &\overset{\eqref{leibnizpertensori}}{=}\sum_{i=1}^{2n-1}  2h(\s,\Es_i)\nabla^S_XC(Y,\Es_i)+\sum_{i=1}^{2n-1}2C(Y,\Es_i)\nabla^S_Xh_\s(\Es_i)\\
             &\overset{\eqref{commutatortensorpersimons}}{=}\sum_{i=1}^{2n-1}  2h(\s,\Es_i)C(\Es_i,\vh)h(X,Y)-\sum_{i=1}^{2n-1}2h(\s,\Es_i)C(Y,\vh)h(X,\Es_i)+\sum_{i=1}^{2n-1}2C(Y,\Es_i)\nabla^S_Xh_\s(\Es_i)\\
             &\overset{\eqref{baseadattatapersimons},\eqref{proiezionepersimons}}{=}-2h(\s,J(\vh))h(X,Y)-2h(\s,\nabla_X\vh)C(Y,\vh)+2\nabla^S_Xh_\s(\pi(J(Y))).\\
       \end{split}
     \end{equation*}
     Notice that
     \begin{equation*}   -2h(\s,J(\vh))h(X,Y)\overset{\eqref{accaesse}}{=}\left(-2\langle \nabla\alpha,J(\vh)\rangle-4\alpha^2\right)h(X,Y).
     \end{equation*}
     On the other hand,
     \begin{equation*}
         -2h(\s,\nabla_X\vh)C(Y,\vh)=2\left(\nabla_X\vh\right)\alpha\langle Y,J(\vh)\rangle +4\alpha^2h(X,J(\vh))\langle Y,J(\vh)\rangle.
     \end{equation*}
     Finally,
          \begin{equation*}
         \begin{split}
             2\nabla^S_Xh_\s(\pi(J(Y)))&=2X\left(h(\s,\pi(J(Y)))\right)-2h(\s,\nabla^S_X\pi(J(Y)))\\                     &\overset{\eqref{accaesse},\eqref{pijeixinhprmo}}{=}2\hess^{\hhh,S}\alpha (X,\pi(J(Y)))+4\alpha^2\langle\pi(J(Y)),\nabla_XJ(\vh)\rangle\\
             &=2\hess^{\hhh,S}\alpha (X,\pi(J(Y)))-4\alpha^2 h(X,J(\pi(J(Y))))\\
             &\overset{\eqref{pijeixinhprmo}}{=}2\hess^{\hhh,S}\alpha (X,\pi(J(Y)))+4\alpha^2 h(X,Y)-4\alpha^2h(X,J(\vh))\langle Y,J(\vh)\rangle.
         \end{split}
     \end{equation*}
     Putting the previous equations together, we conclude that
     \begin{equation*}
         \mathrm{VIII}=-2\langle \nabla\alpha,J(\vh)\rangle h(X,Y)+2\left(\nabla_X\vh\right)\alpha\langle Y,J(\vh)\rangle +2\hess^{\hhh,S}\alpha (X,\pi(J(Y))).
     \end{equation*}
     \textbf{Computation of $\mathrm{II}$.}
     \begin{equation*}
         \begin{split}
         \mathrm{II}             &=\sum_{i,j=1}^{2n-1}R^S(X,\Es_i,\Es_i,\Es_j)h(\Es_j,Y)+\sum_{i,j=1}^{2n-1}R^S(X,\Es_i,Y,\Es_j)h(\Es_i,\Es_j)\\
             &\overset{\eqref{gaussssssub}}{=}\sum_{i,j=1}^{2n-1}h(\Es_i,\Es_i)h(X,\Es_j)h(\Es_j,Y)-\sum_{i,j=1}^{2n-1}h(X,\Es_i)h(\Es_i,\Es_j)h(\Es_j,Y)\\
             &\quad+\sum_{i,j=1}^{2n-1}h(\Es_i,Y)h(X,\Es_j)h(\Es_i,\Es_j)-\sum_{i,j=1}^{2n-1}h(X,Y)h(\Es_i,\Es_j)^2\\
             &\overset{H\equiv 0}{=}\sum_{i,j=1}^{2n-1}h(\Es_i,Y)h(X,\Es_j)(h(\Es_i,\Es_j)-h(\Es_j,\Es_i))-|h|^2h(X,Y)\\
             &\overset{\eqref{comecommutah}}{=}2\alpha\sum_{i,j=1}^{2n-1}h(\Es_i,Y)h(X,\Es_j)C(\Es_j,\Es_i)-|h|^2h(X,Y)\\
             &\overset{\eqref{comecommutah}}{=}2\alpha\sum_{i,j=1}^{2n-1}h(Y,\Es_i)h(X,\Es_j)C(\Es_j,\Es_i)+4\alpha^2\sum_{i,j=1}^{2n-1}h(X,\Es_j)C(Y,\Es_i)C(\Es_j,\Es_i)-|h|^2h(X,Y).\\
         \end{split}
     \end{equation*}
     Notice that
     \begin{equation*}
         \begin{split}
             \sum_{i,j=1}^{2n-1}h(Y,\Es_i)h(X,\Es_j)C(\Es_j,\Es_i)&=\sum_{j=1}^{2n-1}h(X,\Es_j)\left\langle\nabla_Y\vh,\sum_{i=1}^{2n-1}\langle J(\Es_j),\Es_i\rangle \Es_i\right\rangle\\
             &\overset{\eqref{metricequationnablas}}{=}\sum_{j=1}^{2n-1}h(X,\Es_j)\langle\nabla_Y\vh,J(\Es_j)\rangle\\
             &=-\left\langle\nabla_X\vh,\sum_{j=1}^{2n-1}\langle J(\nabla_Y\vh),\Es_j\rangle \Es_j\right\rangle\\
             &=-\langle \nabla_X\vh,J(\nabla_Y\vh)\rangle+\langle\nabla_X\vh,\vh\rangle\langle J(\nabla_Y\vh),\vh\rangle\\
             &\overset{\eqref{metricequationnablas}}{=}-\langle \nabla_X\vh,J(\nabla_Y\vh)\rangle.
         \end{split}
     \end{equation*}
     On the other hand,
     \begin{equation*}
         \begin{split}
           \sum_{i,j=1}^{2n-1}h(X,\Es_j)C(Y,\Es_i)&C(\Es_j,\Es_i)=  \sum_{j=1}^{2n-1}h(X,\Es_j)\left\langle J(Y),\sum_{i=1}^{2n-1}\langle J(\Es_j),\Es_i\rangle \Es_i\right\rangle\\
           &=\sum_{j=1}^{2n-1}h(X,\Es_j)\left\langle J(Y), J(\Es_j)\right\rangle-\sum_{j=1}^{2n-1}h(X,\Es_j)\left\langle J(Y),\vh\right\rangle\langle J(\Es_j),\vh\rangle\\
           &=\sum_{j=1}^{2n-1}h(X,\Es_j)\left\langle Y, \Es_j\right\rangle+h(X,J(\vh))\left\langle J(Y),\vh\right\rangle\\
           &=h(X,Y)-h(X,J(\vh))\left\langle Y,J(\vh)\right\rangle.
         \end{split}
     \end{equation*}
     Therefore we conclude that
     \begin{equation*}
         \mathrm{II}=\left(-|h|^2+4\alpha^2\right)h(X,Y)-4\alpha^2h(X,J(\vh))\left\langle Y,J(\vh)\right\rangle-2\alpha\langle \nabla_X\vh,J(\nabla_Y\vh)\rangle.
     \end{equation*}
     \textbf{Computation of $\mathrm{IV}$.} 
     \begin{equation*}
         \begin{split}
             \mathrm{IV}&=\sum_{i=1}^{2n-1}2\nabla^S_X(\Es_i\alpha)C(Y,\Es_i)+\sum_{i=1}^{2n-1}2\Es_i\alpha\nabla^S_XC(Y,\Es_i)\\
             &\overset{\eqref{commutatortensorpersimons}}{=}2\nabla^S_X \left(\pi(J(Y)) \alpha\right)+\sum_{i=1}^{2n-1}2 G_i\alpha C(\Es_i,\vh)h(X,Y)-\sum_{i=1}^{2n-1}2\Es_i\alpha C(Y,\vh)h(X,\Es_i)\\
             &=2\hess^{\hhh,S}\alpha(X,\pi(J(Y)))-2\langle \nabla\alpha,J(\vh)\rangle h(X,Y)+  2 \langle Y,J(\vh)\rangle 
 \left(\nabla_X\vh\right)\alpha.
         \end{split}
     \end{equation*}
     \textbf{Computation of $\mathrm{V}$}
     \begin{equation*}
         \begin{split}
             \mathrm{V}&=\sum_{i=1}^{2n-1}2X\alpha C(\Es_i,\vh)h(\Es_i,Y)+\sum_{i=1}^{2n-1}2\alpha X(C(\Es_i,\vh)h(\Es_i,Y))\\
             &\quad-\sum_{i=1}^{2n-1}2\alpha C(\nabla^S_X\Es_i,\vh)h(\Es_i,Y)-\sum_{i=1}^{2n-1}2\alpha C(\Es_i,\vh)h(\nabla^S_X\Es_i,Y)-\sum_{i=1}^{2n-1}2\alpha C(\Es_i,\vh)h(\Es_i,\nabla^S_X Y)\\
             &\overset{\eqref{comecommutah}}{=}-2X\alpha h(Y,J(\vh))-2\alpha X h( J(\vh),Y)+\sum_{i=1}^{2n-1}2\alpha\langle \nabla^S_X\Es_i,J(\vh)\rangle h(\Es_i,Y)\\
             &\quad+ 2\alpha h(\nabla^S_XJ(\vh),Y)+2\alpha h(J(\vh),\nabla^S_X Y)\\
             &=-2X\alpha h(Y,J(\vh))-2\alpha X \langle\nabla_{J(\vh)}\vh,Y\rangle-\sum_{i=1}^{2n-1}2\alpha\langle \Es_i,\nabla^S_X J(\vh)\rangle h(\Es_i,Y)\\
             &\quad+ 2\alpha h(\nabla^S_XJ(\vh),Y)+2\alpha h(J(\vh),\nabla^S_X Y)\\
             &=-2X\alpha h(Y,J(\vh))-2\alpha X \langle\nabla_{J(\vh)}\vh,Y\rangle+2\alpha h(J(\vh),\nabla^S_X Y)\\
             &=-2X\alpha h(Y,J(\vh))-2\alpha  \langle\nabla^S_X\nabla_{J(\vh)}\vh,Y\rangle\\
             &=-2X\alpha h(Y,J(\vh))-2\alpha  \langle\nabla_X\nabla_{J(\vh)}\vh,Y\rangle.
         \end{split}
     \end{equation*}
   \textbf{Computation of $\mathrm{III}$.}
   \begin{equation*}
       \begin{split}
    \mathrm{III}&=2\nabla^S_\s h(\pi(J(X)),Y)\\
    &=   2\s\langle \nabla_{\pi(J(X))}\vh,Y\rangle-2\langle \nabla_{\nabla^S_\s \pi(J(X))}\vh,Y\rangle-2\langle\nabla _{\pi(J(X))}\vh,\nabla^S_\s Y)\\
    &= 2\langle \nabla^S_\s\nabla_{\pi(J(X))}\vh,Y\rangle+2\langle \nabla_{\pi(J(X))}\vh,\nabla^S_\s Y\rangle-2\langle \nabla_{\nabla^S_\s \pi(J(X))}\vh,Y\rangle-2\langle\nabla _{\pi(J(X))}\vh,\nabla^S_\s Y)\\
    &= 2\langle \nabla_\s\nabla_{\pi(J(X))}\vh,Y\rangle-2\langle \nabla_{\nabla_\s \pi(J(X))}\vh,Y\rangle+2\langle \nabla_\s \pi(J(X)),\vh\rangle\langle\nabla_\vh\vh,Y\rangle\\
&=2 \langle R(\s,\pi(J(X)))\vh,Y\rangle+2\langle \nabla_{\pi(J(X))}\nabla_\s\vh,Y\rangle+2\langle \nabla_{[\s,\pi(J(X))]}\vh,Y\rangle\\
&\quad-2\langle \nabla_{\nabla_\s \pi(J(X))}\vh,Y\rangle+2\langle \nabla_\s \pi(J(X)),\vh\rangle\langle\nabla_\vh\vh,Y\rangle\\
&\overset{R\equiv 0}{=}2\langle \nabla_{\pi(J(X))}\nabla_\s\vh,Y\rangle+2\langle \nabla_{[\s,\pi(J(X))]-\nabla_\s \pi(J(X))}\vh,Y\rangle+2\langle \nabla_\s \pi(J(X)),\vh\rangle\langle\nabla_\vh\vh,Y\rangle\\
&\overset{\eqref{nablaessenu},\eqref{propvh5}}{=} 2\langle \nabla_{\pi(J(X))}\nabla^\hhh\alpha,Y\rangle+4\langle \nabla_{\pi(J(X))}\alpha^2 J(\vh),Y\rangle+2\langle\nabla_{\tor_\nabla(\pi(J(X)),\s)}\vh,Y\rangle\\
&\quad-2\langle\nabla_{\nabla_{\pi(J(X))}\s}\vh,Y\rangle+4\alpha\langle Y,J(\vh)\rangle h(\s,\pi(J(X))).
       \end{split}
   \end{equation*}
   Notice that 
   \begin{equation*}
       2\langle \nabla_{\pi(J(X))}\nabla^\hhh\alpha,Y\rangle=2\langle \nabla^S_{\pi(J(X))}\nabla^\hhh\alpha,Y\rangle=2\hess^{\hhh,S}\alpha(\pi(J(X)),Y).
   \end{equation*}
   Moreover,
   \begin{equation*}
   \begin{split}
         4\langle \nabla_{\pi(J(X))}\alpha^2 J(\vh),Y\rangle&=8\alpha \langle Y,J(\vh)\rangle\pi(J(X))\alpha+4\alpha^2\langle \nabla_{\pi(J(X))} J(\vh),Y\rangle\\
&=8\alpha \langle Y,J(\vh)\rangle \pi(J(X))\alpha-4\alpha^2h(\pi(J(X)),\pi(J(Y))).\\
   \end{split}
   \end{equation*}
   In addition,
   \begin{equation*}
       \begin{split}
           2\langle\nabla_{\tor_\nabla(\pi(J(X)),\s)}\vh,Y\rangle&\overset{\eqref{pseudotorsion}}{=}-4\langle\pi(J(X)),J(\s)\rangle\langle\nabla_T\vh,Y\rangle\\
           &=-4\langle\pi(J(X)),J(T)\rangle\langle\nabla_T\vh,Y\rangle+4\alpha\langle\pi(J(X)),J(\vh)\rangle\langle\nabla_T\vh,Y\rangle\\
           &\overset{\eqref{pijeixinhprmo}}{=}0.
       \end{split}
   \end{equation*}
   Observe that
   \begin{equation*}
       \begin{split}
           -2\langle\nabla_{\nabla_{\pi(J(X))}\s}\vh,Y\rangle&\overset{\eqref{phflat}}{=}2\langle\nabla_{\nabla_{\pi(J(X))}\alpha\vh}\vh,Y\rangle\\
           &=2\pi(J(X))\alpha\langle\nabla_\vh\vh,Y\rangle+2\alpha\langle\nabla_{\nabla_{\pi(J(X))}\vh}\vh,Y\rangle\\
           &\overset{\eqref{propvh5}}{=}-4\alpha\pi(J(X))\alpha\langle Y,J(\vh)\rangle+2\alpha h(\nabla_{\pi(J(X))}\vh,Y)\\
           &\overset{\eqref{comecommutah}}{=}-4\alpha\pi(J(X))\alpha\langle Y,J(\vh)\rangle+2\alpha h(Y,\nabla_{\pi(J(X))}\vh)+4\alpha^2h(\pi(J(X)),\pi(J(Y))).
       \end{split}
   \end{equation*}
   Finally,
   \begin{equation*}
       4\alpha\langle Y,J(\vh)\rangle h(\s,\pi(J(X)))\overset{\eqref{accaesse},\eqref{pijeixinhprmo}}{=}4\alpha\pi(J(X))\alpha\langle Y,J(\vh)\rangle .
   \end{equation*}
   In conclusion, we infer that
   \begin{equation*}
       \mathrm{III}=2\hess^{\hhh,S}\alpha(\pi(J(X)),Y)+8\alpha\pi(J(X))\alpha\langle Y,J(\vh)\rangle+2\alpha h(Y,\nabla_{\pi(J(X))}\vh).
   \end{equation*}
The thesis follows from adding the terms that we have just computed.
\end{proof}
\subsection{Contracted Simons formulas}\label{secsimineq}
Combining \Cref{newnewfullsimonsintro} with \Cref{lapnormelapnonnorm}, \Cref{laplacianodihmodificato}, we provide contracted Simons formula for $\hat\Delta^{\hhh,S}|\tilde h|^2$. In order to handle the Hessian term appearing in the second line of \eqref{simonsformula}, we assume throughout this section that  \begin{equation}\label{graddialfasololungojeinuconk}\tag{$\mathrm{P}2$}
             \nabla^{\hhh,S}\alpha\equiv \langle\nabla\alpha,J(\vh)\rangle J(\vh).         \end{equation} 
             Condition \eqref{graddialfasololungojeinuconk} is motivated by the following relevant instances.
\begin{example}[Vertical hypersurfaces, $\mathrm{I}$]\label{newnewverticalhypuno}
Vertical hyperplanes satisfy \eqref{graddialfasololungojeinuconk}, since $\alpha\equiv 0$.    
More generally, we say that a hypersurface is \emph{vertical} if $T$ is tangent to $S$ at every point. As for vertical hyperplanes, $\alpha\equiv 0$, whence vertical hypersurfaces (not neessarily minimal) satisfy \eqref{graddialfasololungojeinuconk}.
\end{example}
\begin{example}[Catenoidal hypersurfaces, $\mathrm{II}$]\label{newnewexamplecatenoiddue}
Every catenoidal hypersurface arising from \Cref{newnewexamplecatenoid} satisfy \eqref{graddialfasololungojeinuconk}. More generally, every horizontally umbilic hypersurface (not necessarily minimal) satisfies \eqref{graddialfasololungojeinuconk} (cf. \cite[Proposition 4.2]{MR3794892}). We stress that differently from the previous examples, $\alpha=0$ only when $S_E$ intersect the horizontal hyperplane $\{t=0\}$.
\end{example}
Among minimal characteristic hypersurfaces, we have the following important example.
\begin{example}[Hyperbolic paraboloids, $\mathrm{II}$]\label{newnewhyperparadue}
    Let $S$ be the hyperbolic paraboloid as in \Cref{newnewhyperparauno}. A direct computation shows that $S\setminus S_0$ verifies \eqref{graddialfasololungojeinuconk}.   
\end{example}
Precisely, we recall that 
 condition \eqref{graddialfasololungojeinuconk} implies that, on a minimal hypersurface,
    \begin{equation}\label{newnewcontractedsimons}
    \begin{split}
         \frac{1}{2}\hat\Delta^{\hhh,S}|\tilde h|^2
        & =|\nabla^{\hhh,S}\tilde h|^2-q|\tilde h|^2+6\alpha^2|\tilde h|^2-6\alpha^2\left|\nabla_{J(\vh)}\vh\right|^2+4J(\vh)\alpha\,|\tilde h|^2-4J(\vh)\alpha\,\ell^2\\
        &\quad-\left(4J(\vh)\alpha+6\alpha^2\right) \left\langle\tilde h,\tilde h_J\right\rangle,
    \end{split}
\end{equation}
where $\tilde h_J$ is the $(2,0)$-tensor field defined by $\tilde h_J(X,Y)=\tilde h(\pi(J(X)),\pi(J(Y))$ for any $X,Y\in\Gamma(\hhh TS)$.
\begin{proof}[Proof of \Cref{newnewcontractedsimonsstatementintro}]
Fix a local orthonormal frame $\E_1,\ldots,\E_{2n-1}$ satisfying \eqref{diagontildeh}. Then
\begin{equation*}
    \begin{split}
         \frac{1}{2}\Delta^{\hhh,S}|\tilde h|^2&\overset{\eqref{lapnormelapnonnorm}}{=}|\nabla^{\hhh,S}\tilde h|^2+\sum_{j=1}^{2n-1}\tilde h(\E_j,\E_j)\left(\Delta^{\hhh,S}\tilde h\right)(\E_j,\E_j)\\
        & \overset{\eqref{simonsformula}}{=}|\nabla^{\hhh,S}\tilde h|^2-q|\tilde h|^2+8\alpha^2|\tilde h|^2\\
        &\quad+4\sum_{j=1}^{2n-1}\tilde h(\E_j,\E_j)\hess^{\hhh,S}\alpha(\pi(J(\E_j)),\E_j)+4\sum_{j=1}^{2n-1}\tilde h(\E_j,\E_j)\hess^{\hhh,S}\alpha(\E_j,\pi(J(\E_j)))\\
        &\quad+\sum_{j=1}^{2n-1}\tilde h(\E_j,\E_j)\Big(16\alpha\pi(J(\E_j))\alpha-8\alpha^2h(\E_j,J(\vh))+4\left(\nabla_{\E_j}\vh\right)\alpha\Big)\langle \E_j,J(\vh)\rangle\\
        &\quad-4\sum_{j=1}^{2n-1}\tilde h(\E_j,\E_j)\tilde h(\E_j,J(\vh))\E_j\alpha +2\alpha\sum_{j=1}^{2n-1} \tilde h(\E_j,\E_j)h(\E_j,\nabla_{\pi(J(\E_j))}\vh)\\
        &\quad-2\alpha \sum_{j=1}^{2n-1} \tilde h(\E_j,\E_j)\langle\nabla_{\E_j}\nabla_{J(\vh)}\vh,\E_j\rangle-4\alpha^2\sum_{j=1}^{2n-1} \tilde h(\E_j,\E_j)h(\pi(J(\E_j)),\pi(J(\E_j))).
    \end{split}
\end{equation*}
We compute each therm separately. First, we deal with the Hessian term. Indeed, by \eqref{graddialfasololungojeinuconk},
\begin{equation*}
\begin{split}
      4&\sum_{j=1}^{2n-1}\tilde h(\E_j,\E_j)\hess^{\hhh,S}\alpha(\pi(J(\E_j)),\E_j)= 4\sum_{j=1}^{2n-1}\tilde h(\E_j,\E_j)\left(\pi(J(\E_j))\E_j\alpha-\left(\nabla^S_{\pi(J(\E_j))}\E_j\right)\alpha\right)\\
      &\overset{\eqref{graddialfasololungojeinuconk}}{=} 4\sum_{j=1}^{2n-1}\tilde h(\E_j,\E_j)\left(\pi(J(\E_j))\left(\langle \E_j,J(\vh)\rangle J(\vh)\alpha\right)-\left\langle\nabla_{\pi(J(\E_j))}\E_j,J(\vh)\right\rangle J(\vh)\alpha\right)\\
      &=4\sum_{j=1}^{2n-1}\tilde h(\E_j,\E_j)\left(\left\langle \E_j,\nabla_{\pi(J(\E_j))}J(\vh)\right\rangle J(\vh)\alpha+\left\langle \E_j,J(\vh)\right\rangle \pi(J(\E_j))J(\vh)\alpha\right)\\
      &=-4 J(\vh)\alpha\sum_{j=1}^{2n-1}\tilde h(\E_j,\E_j)\tilde h\left(\pi(J(\E_j)),\pi(J(\E_j))\right)+4\sum_{j=1}^{2n-1}\tilde h(\E_j,\E_j)\left\langle \E_j,J(\vh)\right\rangle \pi(J(\E_j))J(\vh)\alpha.
\end{split}
\end{equation*}
But
\begin{equation*}
    \begin{split}
        \pi(J(\E_j))J(\vh)\alpha&\overset{\eqref{inhsiiiiiiiii}}{=}J(\vh)\pi(J(\E_j))\alpha+\left(\nabla_{\pi(J(\E_j))}J(\vh)\right)\alpha-\left(\nabla_{J(\vh)}\pi(J(\E_j))\right)\alpha\\   &\overset{\eqref{pijeixinhprmo},\eqref{graddialfasololungojeinuconk}}{=}\left\langle\nabla_{\pi(J(\E_j))}J(\vh),J(\vh)\right\rangle J(\vh)\alpha-\left\langle\nabla_{J(\vh)}\pi(J(\E_j)),J(\vh)\right\rangle J(\vh)\alpha\\
        &\overset{\eqref{pijeixinhprmo}}{=}-\left\langle\nabla_{J(\vh)}\vh,J(\pi(J(\E_j)))\right\rangle J(\vh)\alpha\\
        &\overset{\eqref{pijeixinhprmo}}{=}\left\langle\nabla_{J(\vh)}\vh,\E_j\right\rangle J(\vh)\alpha-\langle J(\vh),\E_j\rangle\left\langle\nabla_{J(\vh)}\vh,J(\vh)\right\rangle J(\vh)\alpha\\
        &=\tilde h(J(\vh),\E_j) J(\vh)\alpha-\ell\langle J(\vh),\E_j\rangle J(\vh)\alpha,
    \end{split}
\end{equation*}
whence
\begin{equation*}
    \begin{split}
          4\sum_{j=1}^{2n-1}\tilde h(\E_j,\E_j)&\hess^{\hhh,S}\alpha(\pi(J(\E_j)),\E_j)=-4 J(\vh)\alpha\sum_{j=1}^{2n-1}\tilde h(\E_j,\E_j)\tilde h\left(\pi(J(\E_j)),\pi(J(\E_j))\right)\\
          &\quad+4J(\vh)\alpha \sum_{j=1}^{2n-1}\tilde h(\E_j,\E_j)\tilde h(J(\vh),\E_j)\langle J(\vh),\E_j\rangle-4\ell J(\vh)\alpha \sum_{j=1}^{2n-1}\tilde h(\E_j,\E_j)\langle J(\vh),\E_j\rangle^2\\
          &=-4 J(\vh)\alpha\sum_{j=1}^{2n-1}\tilde h(\E_j,\E_j)\tilde h\left(\pi(J(\E_j)),\pi(J(\E_j))\right)+4J(\vh)\alpha\left|\nabla_{J(\vh)}\vh\right|^2-4\ell^2J(\vh)\alpha.
    \end{split}
\end{equation*}
In the last equality we used the fact that
\begin{equation}\label{shapejveelle}
    \sum_{j=1}^{2n-1}\tilde h(\E_j,\E_j)\tilde h(J(\vh),\E_j)\langle J(\vh),\E_j\rangle=\left|\nabla_{J(\vh)}\vh\right|^2\qquad\text{and}\qquad \sum_{j=1}^{2n-1}\tilde h(\E_j,\E_j)\langle J(\vh),\E_j\rangle^2=\ell.
\end{equation}
On the other hand,
\begin{equation*}
\begin{split}
      4\sum_{j=1}^{2n-1}\tilde h(\E_j,\E_j)&\hess^{\hhh,S}\alpha(\E_j,\pi(J(\E_j)))
      \overset{\eqref{pijeixinhprmo},\eqref{graddialfasololungojeinuconk}}{=}   - 4J(\vh)\alpha\sum_{j=1}^{2n-1}\tilde h(\E_j,\E_j)\left\langle\nabla_{\E_j}\pi(J(\E_j)),J(\vh)\right\rangle \\
      &\overset{\eqref{pijeixinhprmo}}{=}- 4J(\vh)\alpha\sum_{j=1}^{2n-1}\tilde h(\E_j,\E_j)\left\langle J(\pi(J(\E_j))),\nabla_{\E_j}\vh\right\rangle\\
      & \overset{\eqref{pijeixinhprmo}}{=} 4J(\vh)\alpha\sum_{j=1}^{2n-1}\tilde h(\E_j,\E_j)\left\langle \E_j,\nabla_{\E_j}\vh\right\rangle- 4J(\vh)\alpha\sum_{j=1}^{2n-1}\tilde h(\E_j,\E_j)\langle J(\vh),\E_j)\left\langle J(\vh),\nabla_{\E_j}\vh\right\rangle\\
      &\overset{\eqref{shapejveelle}}{=}4J(\vh)\alpha|\tilde h|^2-4J(\vh)\alpha\left|\nabla_{J(\vh)}\vh\right|^2.
\end{split}
\end{equation*}
Regarding the fourth line,
\begin{equation*}
    \begin{split}
        \sum_{j=1}^{2n-1}&\tilde h(\E_j,\E_j)\Big(16\alpha\pi(J(\E_j))\alpha-8\alpha^2h(\E_j,J(\vh))+4\left(\nabla_{\E_j}\vh\right)\alpha\Big)\langle \E_j,J(\vh)\rangle\\
        &\overset{\eqref{pijeixinhprmo},\eqref{graddialfasololungojeinuconk}}{=}  -8\alpha^2\sum_{j=1}^{2n-1}\tilde h(\E_j,\E_j)\tilde h(\E_j,J(\vh))\langle \E_j,J(\vh)\rangle+ 4 J(\vh)\alpha\sum_{j=1}^{2n-1}\tilde h(\E_j,\E_j)\tilde h(J(\vh),\E_j)\langle \E_j,J(\vh)\rangle\\
        &\overset{\eqref{shapejveelle}}{=}-8\alpha^2\left|\nabla_{J(\vh)}\vh\right|^2+4J(\vh)\alpha \left|\nabla_{J(\vh)}\vh\right|^2.
    \end{split}
\end{equation*}
Moreover,
\begin{equation*}
    \begin{split}
        -4\sum_{j=1}^{2n-1}&\tilde h(\E_j,\E_j)\tilde h(\E_j,J(\vh))\E_j\alpha +2\alpha\sum_{j=1}^{2n-1} \tilde h(\E_j,\E_j)h(\E_j,\nabla_{\pi(J(\E_j))}\vh)\\
        &\overset{\eqref{graddialfasololungojeinuconk}}{=}-4J(\vh)\alpha\sum_{j=1}^{2n-1}\tilde h(\E_j,\E_j)\tilde h(\E_j,J(\vh))\langle J(\vh),\E_j\rangle +2\alpha\sum_{j=1}^{2n-1} \tilde h(\E_j,\E_j)h(\E_j,\nabla_{\pi(J(\E_j))}\vh)\\
        &\overset{\eqref{theh},\eqref{shapejveelle}}{=}
        -4J(\vh)\alpha\left|\nabla_{J(\vh)}\vh\right|^2+2\alpha\sum_{j=1}^{2n-1} \tilde h(\E_j,\E_j)^2 h\left(\pi(J(\E_j)),\E_j\right)\\
        &\quad-2\alpha^2\sum_{j=1}^{2n-1} \tilde h(\E_j,\E_j)\tilde h(\pi(J(\E_j)),\pi(J(\E_j)))\\
        &\overset{\eqref{theh}}{=}-4J(\vh)\alpha\left|\nabla_{J(\vh)}\vh\right|^2+2\alpha\sum_{j=1}^{2n-1} \tilde h(\E_j,\E_j)^2\tilde h\left(\pi(J(\E_j)),\E_j\right)\\
        &\quad-2\alpha^2\sum_{j=1}^{2n-1} \tilde h(\E_j,\E_j)^2\langle J(\pi(J(\E_j))),\E_j\rangle-2\alpha^2\sum_{j=1}^{2n-1} \tilde h(\E_j,\E_j)\tilde h(\pi(J(\E_j)),\pi(J(\E_j)))\\
        &\overset{\eqref{tildehinejejeiejei}}{=}-4J(\vh)\alpha\left|\nabla_{J(\vh)}\vh\right|^2+2\alpha^2\sum_{j=1}^{2n-1} \tilde h(\E_j,\E_j)^2-2\alpha^2\sum_{j=1}^{2n-1} \tilde h(\E_j,\E_j)^2\langle J(\vh),\E_j\rangle^2\\
        &\quad -2\alpha^2\sum_{j=1}^{2n-1} \tilde h(\E_j,\E_j)\tilde h(\pi(J(\E_j)),\pi(J(\E_j)))\\
        &\overset{\eqref{shapejveelle}}{=}-4J(\vh)\alpha\left|\nabla_{J(\vh)}\vh\right|^2+2\alpha^2|\tilde h|^2-2\alpha^2\left|\nabla_{J(\vh)}\vh\right|^2-2\alpha^2\sum_{j=1}^{2n-1} \tilde h(\E_j,\E_j)\tilde h(\pi(J(\E_j)),\pi(J(\E_j))).
    \end{split}
\end{equation*}
Finally, by \Cref{laplacianodihmodificato},
\begin{equation*}
    -2\alpha \sum_{j=1}^{2n-1} \tilde h(\E_j,\E_j)\langle\nabla_{\E_j}\nabla_{J(\vh)}\vh,\E_j\rangle\overset{\eqref{migliorie1}}{=}4\alpha^2\left|\nabla_{J(\vh)}\vh\right|^2-4\alpha^2|\tilde h|^2-\alpha\langle \nabla |\tilde h|^2,J(\vh)\rangle.
\end{equation*}
The thesis follows combining the previous computations and recalling \eqref{modohoritanlaplaperintro}.
\end{proof}
As regards the product $\left\langle\tilde h,\tilde h_J\right\rangle$, we observe that
\begin{equation}\label{newnewcauchyschwartz}
    \left|\left\langle\tilde h,\tilde h_J\right\rangle\right|=\sum_{\substack{j,k=1 \\ j,k\neq n}}^{2n-1}\tilde h(\Es_j,\Es_k)\tilde h(J(\Es_j),J(\Es_k))\leq \sum_{\substack{j,k=1 \\ j,k\neq n}}^{2n-1}\tilde h(\Es_j,\Es_k)^2=|\tilde h|^2-2\left|\nabla_{J(\vh)}\vh\right|^2+\ell^2,
\end{equation}
were we exploited a local orthonormal frame $\Es_1,\ldots,\Es_{2n-1}$ as in \eqref{baseadattatapersimons}. When $n=2$ and $S$ is minimal$, \left\langle\tilde h,\tilde h_J\right\rangle$ can be computed explicitly.

\begin{proposition}\label{newnewpropsuformadiscalprod}
      Let $S\subseteq\hh^2$ be a smooth, immersed, non-characteristic hypersurface without boundary. Assume that $S$ is minimal. Then
      \begin{equation}\label{newnewexpressionofhhj}
           \left\langle\tilde h,\tilde h_J\right\rangle=2\left|\nabla_{J(\vh)}\vh\right|^2-|\tilde h|^2.
      \end{equation}
      In particular,
      \begin{equation}\label{newnewboundperelle}
          2|\tilde h|^2-4\left|\nabla_{J(\vh)}\vh\right|^2+\ell^2\geq0.
      \end{equation}
\end{proposition}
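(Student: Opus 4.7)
The plan is to choose the adapted local orthonormal frame from \eqref{baseadattatapersimons} specialized to $n=2$, namely $\Es_1 \in \Gamma(\hhh' TS)$, $\Es_2 = J(\vh)$ and $\Es_3 = J(\Es_1)$, which by \eqref{jeskew} also satisfies $J(\Es_3) = -\Es_1$. The proof then amounts to computing both sides of \eqref{newnewexpressionofhhj} in this frame and using minimality as the crucial algebraic input.

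For the left-hand side I would observe that $\pi(J(\Es_2)) = \pi(-\vh) = 0$, so every term in the sum $\langle \tilde h, \tilde h_J \rangle = \sum_{i,j} \tilde h(\Es_i,\Es_j)\, \tilde h(\pi(J(\Es_i)),\pi(J(\Es_j)))$ involving the index $2$ vanishes. What remains is a sum over $i,j \in \{1,3\}$, which via $J(\Es_1)=\Es_3$, $J(\Es_3)=-\Es_1$ and the symmetry of $\tilde h$ collapses to
\begin{equation*}
    \langle \tilde h, \tilde h_J\rangle = 2\,\tilde h(\Es_1,\Es_1)\tilde h(\Es_3,\Es_3) - 2\,\tilde h(\Es_1,\Es_3)^2.
\end{equation*}

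For the right-hand side, I would first note that $\nabla_{J(\vh)}\vh$ is horizontal and orthogonal to $\vh$, hence tangent to $S$; since $C(J(\vh),X)=-\langle \vh,X\rangle$ vanishes for $X \in \Gamma(\hhh TS)$, identity \eqref{theh} gives $h(J(\vh),\Es_j) = \tilde h(J(\vh),\Es_j)$. Therefore $|\nabla_{J(\vh)}\vh|^2 = \tilde h(\Es_1,\Es_2)^2 + \ell^2 + \tilde h(\Es_2,\Es_3)^2$. Expanding $|\tilde h|^2$ as the sum of all squared components in the frame and subtracting, the $\tilde h(\Es_1,\Es_2)$ and $\tilde h(\Es_2,\Es_3)$ contributions cancel, leaving
\begin{equation*}
    2|\nabla_{J(\vh)}\vh|^2 - |\tilde h|^2 = \ell^2 - \tilde h(\Es_1,\Es_1)^2 - \tilde h(\Es_3,\Es_3)^2 - 2\,\tilde h(\Es_1,\Es_3)^2.
\end{equation*}
Minimality enters here through the trace constraint $\tilde h(\Es_1,\Es_1)+\ell+\tilde h(\Es_3,\Es_3)=0$; squaring it yields $\ell^2 - \tilde h(\Es_1,\Es_1)^2 - \tilde h(\Es_3,\Es_3)^2 = 2\,\tilde h(\Es_1,\Es_1)\tilde h(\Es_3,\Es_3)$, which matches exactly the expression for $\langle \tilde h, \tilde h_J\rangle$ derived above and completes the proof of \eqref{newnewexpressionofhhj}.

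For \eqref{newnewboundperelle}, the same substitution and trace relation give
\begin{equation*}
    2|\tilde h|^2 - 4|\nabla_{J(\vh)}\vh|^2 + \ell^2 = \bigl(\tilde h(\Es_1,\Es_1)-\tilde h(\Es_3,\Es_3)\bigr)^2 + 4\,\tilde h(\Es_1,\Es_3)^2 \geq 0,
\end{equation*}
so both claims follow from the same frame-level bookkeeping. There is no serious obstacle here; the argument is essentially combinatorial, and what makes it work is the low dimension $n=2$, in which $\hhh' TS$ has rank $2$ with the $J$-action determined by the single pair $(\Es_1,\Es_3)$ and the trace relation reduces to one scalar identity. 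Alternatively, \eqref{newnewboundperelle} could be derived a posteriori from \eqref{newnewexpressionofhhj} combined with the Cauchy--Schwarz-type bound \eqref{newnewcauchyschwartz}, but the direct identity above is equally transparent.
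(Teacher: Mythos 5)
Your proposal is correct and follows essentially the same route as the paper: the adapted frame \eqref{baseadattatapersimons} with $\Es_2=J(\vh)$, the identity $\langle\tilde h,\tilde h_J\rangle=2\tilde h(\Es_1,\Es_1)\tilde h(\Es_3,\Es_3)-2\tilde h(\Es_1,\Es_3)^2$, and minimality entering only through the trace relation. The only (harmless) deviation is that you obtain \eqref{newnewboundperelle} directly as the sum of squares $\bigl(\tilde h(\Es_1,\Es_1)-\tilde h(\Es_3,\Es_3)\bigr)^2+4\tilde h(\Es_1,\Es_3)^2$, whereas the paper deduces it from \eqref{newnewexpressionofhhj} combined with the Cauchy--Schwarz bound \eqref{newnewcauchyschwartz} — the alternative you yourself note at the end.
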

\begin{proof}
    First, \eqref{newnewboundperelle} follows by \eqref{newnewcauchyschwartz} and \eqref{newnewexpressionofhhj}. Let $\Es_1,\ldots,\Es_{2n-1}$ be as in \eqref{baseadattatapersimons}. Then
    \begin{equation*}
        |\tilde h|^2=2\left|\nabla_{J(\vh)}\vh\right|^2-\ell^2+\tilde h(\Es_1,\Es_1)^2+\tilde h(\Es_3,\Es_3)^2+2\tilde h(\Es_1,\Es_3)^2.
    \end{equation*}
    But, as $S$ is minimal,
    \begin{equation*}
    \begin{split}
          \left\langle\tilde h,\tilde h_J\right\rangle&=2\tilde h(\Es_1,\Es_1)\tilde h(\Es_3,\Es_3)-2\tilde h(\Es_1,\Es_3)^2\\
          &=\left(\tilde h(\Es_1,\Es_1)+\tilde h(\Es_3,\Es_3)\right)^2-h(\Es_1,\Es_1)^2-h(\Es_3,\Es_3)^2-2\tilde h(\Es_1,\Es_3)^2\\
            &\overset{H\equiv 0}{=}\ell^2-h(\Es_1,\Es_1)^2-h(\Es_3,\Es_3)^2-2\tilde h(\Es_1,\Es_3)^2\\
            &=2\left|\nabla_{J(\vh)}\vh\right|^2-|\tilde h|^2.
    \end{split}
    \end{equation*}
\end{proof}
Owing to \Cref{newnewpropsuformadiscalprod}, the contracted Simons formula in $\hh^2$ reads as follows.
\begin{corollary}\label{newnewsimonscontractedcorollary}
       Let $S\subseteq\hh^2$ be a smooth, immersed, non-characteristic hypersurface without boundary. Assume that \eqref{graddialfasololungojeinuconk} holds. Assume that $S$ is minimal. Then 
       \begin{equation}\label{newnewcontractedsimonsinh2prima}
    \begin{split}
         \frac{1}{2}\hat\Delta^{\hhh,S}|\tilde h|^2
        & =|\nabla^{\hhh,S}\tilde h|^2-q|\tilde h|^2+4\alpha^2|\tilde h|^2-8\alpha^2\left|\nabla_{J(\vh)}\vh\right|^2+2\alpha^2\ell^2\\
        &+4\left(J(\vh)\alpha+\alpha^2\right)\left( 2|\tilde h|^2-4\left|\nabla_{J(\vh)}\vh\right|^2+\ell^2\right)\\
        &\quad+\left(8J(\vh)\alpha+6\alpha^2\right) \left(\left|\nabla_{J(\vh)}\vh\right|^2-\ell^2\right).
    \end{split}
\end{equation}
\end{corollary}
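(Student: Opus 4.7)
The plan is to reduce this two-dimensional statement to a direct algebraic manipulation of two ingredients already available in the paper. The first is the general contracted Simons formula \eqref{newnewcontractedsimons}, proved in \Cref{newnewcontractedsimonsstatementintro} for any minimal hypersurface in $\hh^n$ satisfying \eqref{graddialfasololungojeinuconk}, which applies in particular when $n=2$ under the present hypotheses. The second is the explicit identity \eqref{newnewexpressionofhhj} from \Cref{newnewpropsuformadiscalprod}, which exploits the low-rank structure of $\hhh TS$ in $\hh^2$ (the sub-bundle has rank $3$) together with minimality to collapse the tensorial scalar product $\langle\tilde h,\tilde h_J\rangle$ into the elementary scalar combination $2|\nabla_{J(\vh)}\vh|^2-|\tilde h|^2$.

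Concretely, I would start from \eqref{newnewcontractedsimons} and substitute $\langle\tilde h,\tilde h_J\rangle=2|\nabla_{J(\vh)}\vh|^2-|\tilde h|^2$ into the last term $-(4J(\vh)\alpha+6\alpha^2)\langle\tilde h,\tilde h_J\rangle$. Expanding the product produces four new contributions of the form (constant)$\cdot J(\vh)\alpha\cdot Q$ and (constant)$\cdot\alpha^2\cdot Q$ with $Q\in\{|\tilde h|^2,|\nabla_{J(\vh)}\vh|^2\}$. Collecting these with the analogous terms already present in \eqref{newnewcontractedsimons}, one obtains a single raw identity in which every coefficient is an affine function of $\alpha^2$ and $J(\vh)\alpha$ times one of the three basic quantities $|\tilde h|^2$, $|\nabla_{J(\vh)}\vh|^2$, $\ell^2$.

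The only genuine content of the proof is then to repackage that raw identity into the precise form \eqref{newnewcontractedsimonsinh2prima}, in particular isolating the groupings $4(J(\vh)\alpha+\alpha^2)\bigl(2|\tilde h|^2-4|\nabla_{J(\vh)}\vh|^2+\ell^2\bigr)$ and $(8J(\vh)\alpha+6\alpha^2)\bigl(|\nabla_{J(\vh)}\vh|^2-\ell^2\bigr)$. This particular grouping is not forced by the algebra; rather, it is chosen in view of the sign information provided by \eqref{newnewboundperelle}, namely $2|\tilde h|^2-4|\nabla_{J(\vh)}\vh|^2+\ell^2\geq 0$, and in view of the structural need, for the subsequent analysis of the paper, to display the $\alpha^2$-remainder in a form amenable to the stability inequality.

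I anticipate no real obstacle here: the computation is a bookkeeping verification of coefficients, with the substantive work already discharged by \Cref{newnewcontractedsimonsstatementintro} and \Cref{newnewpropsuformadiscalprod}. The only mild subtlety is to keep the identity $2|\tilde h|^2-4|\nabla_{J(\vh)}\vh|^2+\ell^2\geq 0$ in mind so as to choose the final presentation that exhibits the non-negative factor explicitly, which is what makes \eqref{newnewcontractedsimonsinh2prima} useful for the Kato-type estimates that follow.
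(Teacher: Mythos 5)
Your proposal is correct and matches the paper's own argument: the corollary is obtained exactly by substituting the identity $\left\langle\tilde h,\tilde h_J\right\rangle=2\left|\nabla_{J(\vh)}\vh\right|^2-|\tilde h|^2$ of \Cref{newnewpropsuformadiscalprod} into the contracted Simons formula \eqref{newnewcontractedsimons} and regrouping the coefficients, and the coefficient check indeed confirms \eqref{newnewcontractedsimonsinh2prima}. The remark about choosing the grouping so as to display the non-negative factor from \eqref{newnewboundperelle} is likewise in line with the paper's stated motivation.
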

While \eqref{newnewcontractedsimonsinh2prima} is just a  direct consequence of \eqref{newnewcontractedsimons} and \eqref{newnewexpressionofhhj}, the form in which we have expressed it allows certain important aspects to be highlighted. For instance, \eqref{newnewboundperelle} provides a lower bound for the second line of \eqref{newnewcontractedsimonsinh2prima} as soon as 
\begin{equation}\label{hpintermediatraqejvraffinata}\tag{$\mathrm{P}3$}
        J(\vh)\alpha+\alpha^2\geq 0.
    \end{equation}
 Condition \eqref{hpintermediatraqejvraffinata} implies that the fundamental function $q$ is bounded below by $|\tilde h|^2$ for any $n\geq 1$, as 
 \begin{equation}\label{segnodiq}
            \begin{split}
                  q =|\tilde h|^2 +4J(\vh)\alpha +(2n+2)\alpha^2             \overset{\eqref{hpintermediatraqejvraffinata}}{\geq} |\tilde h|^2+(2n-2)\alpha^2\geq |\tilde h|^2.            \end{split}
        \end{equation}
    We point out that \eqref{hpintermediatraqejvraffinata} appears naturally in the context of complete minimal hypersurfaces, as suggested by the following examples.
    \begin{example}[Vertical hypersurfaces, $\mathrm{II}$]\label{newnewverticalhypdue}
Vertical hyperplanes satisfy equality in \eqref{hpintermediatraqejvraffinata}, since $\alpha\equiv 0$.    
More generally every vertical hypersurface (not necessarily minimal) satisfies equality in \eqref{hpintermediatraqejvraffinata}.
\end{example}
\begin{example}[Catenoidal hypersurfaces, $\mathrm{III}$]\label{newnewexamplecatenoidtre}
Every catenoidal hypersurface arising from \Cref{newnewexamplecatenoid} satisfy \eqref{hpintermediatraqejvraffinata}. Precisely, a long but simple computation shows that
\begin{equation*}
    J(\vh)\alpha+\alpha^2=3E^2|z|^{-8}.
\end{equation*}
More generally, every minimal horizontally umbilic hypersurface satisfies \eqref{hpintermediatraqejvraffinata}. Indeed, in this case,  $J(\vh)\alpha+\alpha^2=\frac{1}{2n-2}|\tilde h|^2$ (cf. \cite[Proposition 4.2]{MR3794892}).
\end{example}
\begin{example}[Helicoidal hypersurfaces]\label{newnewexampleelicoid}
Let $S\subseteq\hh^n$ be the helicoid parametrized by 
\begin{equation*}
    F:\rr^{2n}\longrightarrow\hh^n,\qquad F(s,\theta,\xi_2,\ldots,\xi_n,\eta_2,\ldots,\eta_n)=(s\cos\theta,\xi_2,\ldots,\xi_n,s\sin\theta,\eta_2,\ldots,\eta_n,\theta).
\end{equation*}
    An easy computation shows that $S$ is a smooth, embedded, complete, minimal, non-characteristic hypersurface. Moreover, it is not difficult to verify that
    \begin{equation*}
        J(\vh)\alpha+\alpha^2=\left(\frac{1+s^2}{(1+s^2)^2+s^2\left(\xi_1^2+\cdots+\xi_n^2+\eta_1^2+\cdots+\eta_n^2\right)}\right)^2,
    \end{equation*}
    whence $S$ verifies \eqref{hpintermediatraqejvraffinata}.
\end{example}
The properties of a minimal hypersurface of being complete and non-characteristic appear crucial for the validity of \eqref{hpintermediatraqejvraffinata}. This fact is already evident in $\hh^1$. In the first Heisenberg group, where we recall that \eqref{taiwansplitting} and \eqref{graddialfasololungojeinuconk} are always satisfied, every complete, minimal, non characteristic surface in $\hh^1$ satisfies \eqref{hpintermediatraqejvraffinata} (cf. \cite{MR3406514,GR24}). Instead, there are examples of complete, minimal, characteristic surfaces for which \eqref{hpintermediatraqejvraffinata} fails. In turn, the latter provide counterexamples in higher dimension too.
\begin{example}[Hyperbolic paraboloids,  $\mathrm{III}$]\label{newnewhyperparatre}
    Let $n\geq 1$. Let $S$ hyperbolic paraboloid of \Cref{newnewhyperparauno}. We recall that $S$ is a complete, minimal hypersurface, satisfying \eqref{taiwansplitting} and \eqref{graddialfasololungojeinuconk}, with uncountably many characteristic points. A direct computations reveals that
    \begin{equation*}
        J(\vh)\alpha+\alpha^2=-\frac{1}{4x_1^2+\cdots+4x_n^2}
    \end{equation*}
    on $S\setminus S_0$, whence $S$ does not satisfy \eqref{hpintermediatraqejvraffinata}. 
\end{example}
As further support to the intrinsic nature of \eqref{hpintermediatraqejvraffinata}, it is interesting to analyze its behavior in the flattest, characteristic example.
\begin{example}[The horizontal hyperplane]\label{newnewhorizontalhyperplane}
    Let $S\subseteq \hh^n$ be the horizontal hyperplane $\{t=0\}$. $S$ is a complete, minimal, characteristic hypersurface, with $S_0=\{0\}$ and $\alpha\neq 0$ on $S\setminus S_0$. Moreover, $S\setminus S_0$ verifies \eqref{taiwansplitting} and \eqref{graddialfasololungojeinuconk}. In addition, a direct computation shows that $J(\vh)\alpha+\alpha^2\equiv 0$ on $S\setminus S_0$.
\end{example}
   To conclude this section, we observe that the control on the horizontal shape operator provided by \eqref{taiwansplitting} would allow to discard the contribution provided by the last line of \eqref{newnewcontractedsimonsinh2}. Indeed, \eqref{taiwansplitting} is equivalent to require that $\left|\nabla_{J(\vh)} \vh\right|^2=\ell^2$. Therefore, when \eqref{taiwansplitting}, \eqref{graddialfasololungojeinuconk} and \eqref{hpintermediatraqejvraffinata} hold, \eqref{newnewcontractedsimonsinh2} implies the following lower bound.
    \begin{corollary}
       Let $S\subseteq\hh^2$ be a smooth, immersed, non-characteristic hypersurface without boundary. Assume that \eqref{taiwansplitting}, \eqref{graddialfasololungojeinuconk} and \eqref{hpintermediatraqejvraffinata} hold. Assume that $S$ is minimal. Then 
       \begin{equation}\label{newnewcontractedsimonsinh2}
    \begin{split}
         \frac{1}{2}\hat\Delta^{\hhh,S}|\tilde h|^2
        \geq|\nabla^{\hhh,S}\tilde h|^2-q|\tilde h|^2+4\alpha^2|\tilde h|^2-6\ell^2.     
    \end{split}
\end{equation}
\end{corollary}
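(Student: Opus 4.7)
The plan is to read off the bound directly from the identity \eqref{newnewcontractedsimonsinh2prima} of \Cref{newnewsimonscontractedcorollary}, using \eqref{taiwansplitting} to eliminate its last line and then \eqref{hpintermediatraqejvraffinata} together with \Cref{newnewpropsuformadiscalprod} to discard its middle line as a nonnegative remainder. The upshot is that, once \eqref{taiwansplitting} collapses $|\nabla_{J(\vh)}\vh|^2$ to $\ell^2$, the only surviving contribution is precisely the first line of \eqref{newnewcontractedsimonsinh2prima}, evaluated on the splitting provided by \eqref{taiwansplitting}.

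First I would establish the pointwise identity $|\nabla_{J(\vh)}\vh|^2=\ell^2$ under \eqref{taiwansplitting}. Since $J'(J(\vh))=0$, the definition of $\tilde A$ gives $\tilde A(J(\vh))=\nabla_{J(\vh)}\vh$, and \eqref{taiwansplitting} asserts that $J(\vh)$ is an eigenvector of $\tilde A$. The corresponding eigenvalue is $\tilde h(J(\vh),J(\vh))=\ell$, so that $\nabla_{J(\vh)}\vh=\ell\, J(\vh)$, whence $|\nabla_{J(\vh)}\vh|^2=\ell^2$. Plugging this identity into \eqref{newnewcontractedsimonsinh2prima} makes the factor $|\nabla_{J(\vh)}\vh|^2-\ell^2$ vanish, so the entire last line disappears; the middle line reduces to
\begin{equation*}
    4\bigl(J(\vh)\alpha+\alpha^2\bigr)\bigl(2|\tilde h|^2-3\ell^2\bigr),
\end{equation*}
and the first line reduces to the pure-derivative and $\alpha^2$-remainder block appearing in the claimed inequality.

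Next I would show that this middle line contributes nonnegatively. The first factor is nonnegative by \eqref{hpintermediatraqejvraffinata}. For the second factor, \Cref{newnewpropsuformadiscalprod}, which is specific to minimal hypersurfaces in $\hh^2$, yields \eqref{newnewboundperelle}, i.e. $2|\tilde h|^2-4|\nabla_{J(\vh)}\vh|^2+\ell^2\geq 0$; specializing via $|\nabla_{J(\vh)}\vh|^2=\ell^2$ gives $2|\tilde h|^2-3\ell^2\geq 0$. Multiplying the two nonnegative factors and discarding the resulting nonnegative term produces the stated lower bound.

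No genuine obstacle arises: all the hard analysis has already been absorbed into \eqref{newnewcontractedsimonsinh2prima}, into the pointwise inequality \eqref{newnewboundperelle}, and into the eigenvector reinterpretation of \eqref{taiwansplitting}. The only subtle point is the clean identification of $\ell$ as the eigenvalue of $\tilde A$ along $J(\vh)$; without it, one could not simultaneously eliminate the last line of \eqref{newnewcontractedsimonsinh2prima} and reduce the middle line to a product of a sign-controlled factor coming from \eqref{hpintermediatraqejvraffinata} and a sign-controlled factor coming from \Cref{newnewpropsuformadiscalprod}.
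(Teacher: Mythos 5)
Your proof is correct and follows essentially the same route as the paper: the paper likewise reads the corollary off from \eqref{newnewcontractedsimonsinh2prima}, observing that \eqref{taiwansplitting} is equivalent to $|\nabla_{J(\vh)}\vh|^2=\hnn^2$ (which kills the last line) and that the middle line is nonnegative by \eqref{hpintermediatraqejvraffinata} combined with \eqref{newnewboundperelle}. The surviving remainder you obtain, $4\alpha^2|\tilde h|^2-6\alpha^2\hnn^2$, is the intended one (consistent with \eqref{newnewcontractedsimonsinh2introintro} and \eqref{simonscondelta}), so your reading of the displayed bound is the correct one.
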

 The inequality provided by \eqref{newnewcontractedsimonsinh2} is sharp in the class of minimal hypersurfaces satisfying \eqref{taiwansplitting}, \eqref{graddialfasololungojeinuconk} and \eqref{hpintermediatraqejvraffinata}, as the next example shows.
 \begin{example}[Catenoidal hypersurfaces, $\mathrm{IV}$]\label{newnewexamplecatenoidalquattro}
     Let $S_E\subseteq\hh^2$ be any catenoidal hypersurface as in \eqref{newnewexamplecatenoid}. Since $S_E$ satisfies \eqref{graddialfasololungojeinuconk}, then it satisfies \eqref{newnewcontractedsimonsinh2} by \Cref{newnewsimonscontractedcorollary}. A computation shows that $|\tilde h|^2=6E|z|^{-8}$. We know from \eqref{newnewellpercatenoid} that $\ell^2=|\nabla_{J(\vh)}\vh|^2=4E^2|z|^{-8}$. In particular, as $2|\tilde h|^2=3\ell^2$ and $\ell^2=|\nabla_{J(\vh)}\vh|^2$, the last two lines of \eqref{newnewcontractedsimonsinh2prima} vanish, and $S_E$ satisfies the equality in \eqref{newnewcontractedsimonsinh2}.
 \end{example}
   \subsection{Improved Kato inequalities}\label{seckatoineq}
    In this section we provide a lower bound for $|\nabla^{\hhh,S}\tilde h|^2$ in terms of $|\nabla^{\hhh,S}|\tilde h|^2|^2$, basically under assumptions \eqref{taiwansplitting} and \eqref{graddialfasololungojeinuconk}. Namely, we show that
\begin{equation}\label{katostatement2conk}
            \left(1+\frac{k}{2n-1}\right)|\nabla^{\hhh,S}|\tilde h|^2|^2\leq 4|\tilde h|^2|\nabla^{\hhh,S}\tilde h|^2 +4\alpha^2|\tilde h|^2\left((4k-2)|\tilde h|^2+(2+2kn-2k-4n)\hnn^2\right)
         \end{equation}
         for any $k\in [0,2]$.
     \begin{proof}[Proof of \Cref{newnewkatoconkintro}]
  Let $\En_1,\ldots,\En_{2n-1}$ be any local orthonormal frame of $\hhh TS$. 
        Notice that
         \begin{equation}\label{katofirststepconk}
         \begin{split}
             \En_i|\tilde h|^2&=\sum_{j,k=1}^{2n-1}\En_i(\tilde h(\En_j,\En_k)^2)\\
              &=2\sum_{j,k=1}^{2n-1}\tilde h(\En_j,\En_k)\nabla^S_{\En_i}\tilde h(\En_j,\En_k)+ 4\sum_{j,k=1}^{2n-1}\tilde h(\En_j,\En_k)\tilde h(\nabla^S_{\En_i}\En_j,\En_k)\\
                 &=2\sum_{j,k=1}^{2n-1}\tilde h(\En_j,\En_k)\nabla^S_{\En_i}\tilde h(\En_j,\En_k)+4\sum_{j,k,s=1}^{2n-1}\tilde h(\En_j,\En_k)\tilde h(\En_s,\En_k)\langle \nabla^S_{\En_i}\En_j,\En_s\rangle\\
              &\overset{\eqref{metricequationnablas}}{=}2\sum_{j,k=1}^{2n-1}\tilde h(\En_j,\En_k)\nabla^S_{\En_i}\tilde h(\En_j,\En_k)
         \end{split}
         \end{equation}
         for any $i=1,\ldots,2n-1$, 
         so that
         \begin{equation}\label{katosecondstepconk}
             \begin{split}
                 |\nabla^{\hhh,S}|\tilde h|^2|^2
                 \overset{\eqref{katofirststepconk}}{=}4\sum_{i=1}^{2n-1}\left(\sum_{j,k=1}^{2n-1}\tilde h(\En_j,\En_k)\nabla^S_{\En_i}\tilde h(\En_j,\En_k)\right)^2
                 \leq 4|\tilde h|^2|\nabla^{\hhh,S}\tilde h|^2,
             \end{split}
         \end{equation}
         where in the last passage we used the Cauchy-Schwarz inequality. In particular, \eqref{newnewkatostatement1conkintro} follows. Assume that $S$ is minimal, and that \eqref{taiwansplitting} and \eqref{graddialfasololungojeinuconk} hold.  
                 Let $\E_1,\ldots,\E_{2n-1}$ be as in \eqref{diagontildeh}. 
        Since we are assuming \eqref{taiwansplitting}, we set $\E_n=J(\vh)$. Arguing as in \eqref{katosecondstepconk}, 
         \begin{equation*}
             \begin{split}
                 |\nabla^{\hhh,S}|\tilde h|^2|^2&\leq 4|\tilde h|^2\sum_{i,j=1}^{2n-1}\nabla^S_{\E_i}\tilde h(\E_j,\E_j)^2\\
                  &=4|\tilde h|^2\sum_{\substack{i,j=1 \\ i\neq j}}^{2n-1}\nabla^S_{\E_i}\tilde h(\E_j,\E_j)^2+4|\tilde h|^2\sum_{i=1}^{2n-1}\nabla^S_{\E_i}\tilde h(\E_i,\E_i)^2\\
                  &\overset{\eqref{nablahtrace}}{=}4|\tilde h|^2\sum_{\substack{i,j=1 \\ i\neq j}}^{2n-1}\nabla^S_{\E_i}\tilde h(\E_j,\E_j)^2+4|\tilde h|^2\sum_{i=1}^{2n-1}\left(\sum_{\substack{j=1 \\ j\neq i}}^{2n-1}\nabla^S_{\E_i}\tilde h(\E_j,\E_j)\right)^2\\
                  &\leq 4|\tilde h|^2\sum_{\substack{i,j=1 \\ i\neq j}}^{2n-1}\nabla^S_{\E_i}\tilde h(\E_j,\E_j)^2+4(2n-2)|\tilde h|^2\sum_{\substack{i,j=1 \\ i\neq j}}^{2n-1}\nabla^S_{\E_i}\tilde h(\E_j,\E_j)^2\\
                  &=4(2n-1)|\tilde h|^2\sum_{\substack{i,j=1 \\ i\neq j}}^{2n-1}\nabla^S_{\E_i}\tilde h(\E_j,\E_j)^2.
             \end{split}
         \end{equation*}
         Set
         \begin{equation*}
             \mathcal R=\sum_{\substack{i,j=1 \\ i\neq j}}^{2n-1}\nabla^S_{\E_i}\tilde h(\E_j,\E_j)^2-\sum_{\substack{i,j=1 \\ i\neq j}}^{2n-1}\nabla^S_{\E_j}\tilde h(\E_i,\E_j)^2.
         \end{equation*}
         Then
         \begin{equation*}
         \begin{split}
              &\left(1+\frac{k}{2n-1}\right)|\nabla^{ \hhh ,S}|\tilde h|^2|^2            
             \leq 4|\tilde h|^2\left(\sum_{i,j=1}^{2n-1}\nabla^S_{\E_i}\tilde h(\E_j,\E_j)^2+k\sum_{\substack{i,j=1 \\ i\neq j}}^{2n-1}\nabla^S_{\E_i}\tilde h(\E_j,\E_j)^2\right)\\
             &\qquad =4|\tilde h|^2\left(\sum_{i,j=1}^{2n-1}\nabla^S_{\E_i}\tilde h(\E_j,\E_j)^2+k\sum_{\substack{i,j=1 \\ i\neq j}}^{2n-1}\nabla^S_{\E_j}\tilde h(\E_i,\E_j)^2 +k\mathcal R\right)\\
              &\qquad = 4|\tilde h|^2\left(\sum_{i,j=1}^{2n-1}\nabla^S_{\E_i}\tilde h(\E_j,\E_j)^2+2\sum_{\substack{i,j=1 \\ i\neq j}}^{2n-1}\nabla^S_{\E_j}\tilde h(\E_i,\E_j)^2 +(k-2)\sum_{\substack{i,j=1 \\ i\neq j}}^{2n-1}\nabla^S_{\E_j}\tilde h(\E_i,\E_j)^2+k\mathcal R\right)\\
                 &\qquad \leq  4|\tilde h|^2\left(|\nabla^{\hhh,S}\tilde h|^2 +(k-2)\sum_{\substack{i,j=1 \\ i\neq j}}^{2n-1}\nabla^S_{\E_j}\tilde h(\E_i,\E_j)^2+k\mathcal R\right)\\
         \end{split}
         \end{equation*}
       We apply the Codazzi equation \eqref{codazzihtildeeqaba} to handle the remainder $\mathcal R$. In this way, by \eqref{taiwansplitting} and \eqref{graddialfasololungojeinuconk},
\begin{equation*}
    \begin{split}
        \mathcal R&=\sum_{\substack{i,j=1 \\ i\neq j}}^{2n-1}\nabla^S_{\E_i}\tilde h(\E_j,\E_j)^2-\sum_{\substack{i,j=1 \\ i\neq j}}^{2n-1}\nabla^S_{\E_j}\tilde h(\E_i, \E_j)^2\\
        &\overset{\eqref{taiwansplitting}}{=}\sum_{\substack{j=1 \\ j\neq n}}^{2n-1}\nabla^S_{J(\vh)}\tilde h(\E_j,\E_j)^2+\sum_{\substack{i=1 \\ i\neq n}}^{2n-1}\nabla^S_{\E_i}\tilde h(J(\vh),J(\vh))^2+\sum_{\substack{i,j=1 \\ i\neq j,\,i,j\neq n}}^{2n-1}\nabla^S_{\E_i}\tilde h(\E_j,\E_j)^2-\sum_{\substack{i,j=1 \\ i\neq j}}^{2n-1}\nabla^S_{\E_j}\tilde h(\E_i,\E_j)^2\\
        &\overset{\eqref{codazzihtildeeqaba},\eqref{graddialfasololungojeinuconk}}{=}\sum_{\substack{j=1 \\ j\neq n}}^{2n-1}\left(\nabla^S_{\E_j}\tilde h(J(\vh),\E_j)-\alpha\tilde h(\E_j,\E_j)\right)^2\\
        &\quad+\sum_{\substack{i=1 \\ i\neq n}}^{2n-1}\nabla^S_{J(\vh)}\tilde h(\E_i ,J(\vh))^2+\sum_{\substack{i,j=1 \\ i\neq j,\,i,j\neq n}}^{2n-1}\nabla^S_{\E_j}\tilde h(\E_i,\E_j)^2-\sum_{\substack{i,j=1 \\ i\neq j}}^{2n-1}\nabla^S_{\E_j}\tilde h(\E_i,\E_j)^2\\
        &=-2\alpha\sum_{\substack{j=1 \\ j\neq n}}^{2n-1}\tilde h(\E_j,\E_j)\nabla_{\E_j}^S\tilde h(J(\vh),\E_j)+\alpha^2|\tilde h|^2-\alpha^2\ell^2\\
        &\overset{\eqref{taiwansplitting}}{=}2\alpha\sum_{\substack{j=1 \\ j\neq n}}^{2n-1}\tilde h(\E_j,\E_j)\tilde h(\nabla^S_{\E_j}J(\vh),\E_j)+2\alpha\sum_{\substack{j=1 \\ j\neq n}}^{2n-1}\tilde h(\E_j,\E_j)\tilde h(J(\vh),\nabla^S_{\E_j}\E_j)+\alpha^2|\tilde h|^2-\alpha^2\ell^2\\
        &\overset{\eqref{diagontildeh}}{=}2\alpha\sum_{\substack{j=1 \\ j\neq n}}^{2n-1}\tilde h(\E_j,\E_j)^2\langle\nabla_{\E_j}J(\vh),\E_j\rangle+2\alpha\ell\sum_{\substack{j=1 \\ j\neq n}}^{2n-1}\tilde h(\E_j,\E_j) \langle \nabla_{\E_j}\E_j,J(\vh)\rangle               
  +\alpha^2|\tilde h|^2-\alpha^2\ell^2  \\
        &=-2\alpha\sum_{\substack{j=1 \\ j\neq n}}^{2n-1}\tilde h(\E_j,\E_j)^2h(\E_j,J(\E_j))+2\alpha\ell\sum_{\substack{j=1 \\ j\neq n}}^{2n-1}\tilde h(\E_j,\E_j)h(\E_j,J(\E_j))+\alpha^2|\tilde h|^2-\alpha^2\ell^2 \\
        &\overset{\eqref{hnonsimejjej}}{=}2\alpha^2\sum_{\substack{j=1 \\ j\neq n}}^{2n-1}\tilde h(\E_j,\E_j)^2-2\alpha^2\ell\sum_{\substack{j=1 \\ j\neq n}}^{2n-1}\tilde h(\E_j,\E_j)+\alpha^2|\tilde h|^2-\alpha^2\ell^2\\
        &\overset{H\equiv 0}{=}3\alpha^2|\tilde h|^2-\alpha^2\ell^2.
    \end{split}
\end{equation*}
       
       On the other hand,
       \begin{equation*}
           \begin{split}
               \sum_{\substack{i,j=1 \\ i\neq j}}^{2n-1}\nabla^S_{\E_j}\tilde h(\E_i,\E_j)^2&\geq \sum_{\substack{j=1 \\ j\neq n}}^{2n-1}\nabla^S_{\E_j}\tilde h(J(\vh),\E_j)^2\\   &\overset{\eqref{taiwansplitting}}{=} \sum_{\substack{j=1 \\ j\neq n}}^{2n-1}\left(\tilde h(\nabla^S_{\E_j}J(\vh),\E_j)+\tilde h(J(\vh),\nabla^S_{\E_j}\E_j)\right)^2\\    
                &       \overset{\eqref{diagontildeh}}{=} \sum_{\substack{j=1 \\ j\neq n}}^{2n-1}\left(\langle \nabla_{\E_j}J(\vh),\E_j\rangle\tilde h(\E_j,\E_j)+\langle \nabla_{\E_j}\E_j,J(\vh)\rangle\hnn\right)^2\\          
          &\overset{\eqref{hnonsimejjej}}{=}\sum_{\substack{j=1 \\ j\neq n}}^{2n-1}\left(\alpha\tilde h(\E_j,\E_j)-\alpha\hnn\right)^2\\
                  &\overset{H\equiv 0}{=}\alpha^2|\tilde h|^2+(2n-1)\alpha^2\hnn^2.
           \end{split}
       \end{equation*}
The thesis follows combining the previous computations.
     \end{proof}
     \subsection{Simons-Kato inequalities}\label{secsimonskato}
     Combining the results of \Cref{secsimineq} and \Cref{seckatoineq}, we can derive several consequences. For future convenience, we focus on the case $n=2$, since higher dimensional estimates follows similarly. To this aim, we fix a minimal hypersurface $S\subseteq\hh^2$ such that \eqref{taiwansplitting}, \eqref{graddialfasololungojeinuconk} and \eqref{hpintermediatraqejvraffinata} hold. For any $\delta\geq 0$  we define the function $\adel$ by
     \begin{equation}\label{adeldef}
         \adel(p)=\sqrt{|\tilde h_p|^2+\delta}
     \end{equation}
for any $p\in S$. Notice that the function $\adel$ belongs to $C^\infty(S)$, for any $\delta>0$. This desingularization will be crucial in the forthcoming \Cref{stabilitisectionnnnnn}. Observe that $A_\delta\geq|\tilde h|$ and that $A_\delta\to|\tilde h|$ uniformly on $S$ as $\delta\to 0^+$. Since $\adel\geq 0$, we can multiply \eqref{newnewcontractedsimonsinh2} by $4\adel^2$, so that
\begin{equation}\label{simonscondelta}
         2\adel^2\hat\Delta^{\hhh,S}|\tilde h|^2\geq 4\adel^2|\nabla^{\hhh,S}\tilde h|^2-4q\adel^2|\tilde h|^2+4\alpha^2\adel^2\left(4|\tilde h|^2-6\hnn^2\right)
    \end{equation}
for any $\delta\geq 0$. In particular, by \eqref{newnewboundperelle}, we deduce from  \eqref{simonscondelta} that
\begin{equation}\label{simonscondeltadue}
         2\adel^2\hat\Delta^{\hhh,S}|\tilde h|^2\geq 4\adel^2|\nabla^{\hhh,S}\tilde h|^2-4q\adel^2|\tilde h|^2+4\alpha^2|\tilde h|^2\left(4|\tilde h|^2-6\hnn^2\right)
    \end{equation}
for any $\delta\geq 0$. Moreover, \eqref{katostatement2conk} implies that
\begin{equation}\label{katocondelta}
            \left(1+\frac{k}{2n-1}\right)|\nabla^S|\tilde h|^2|^2\leq 4A(\delta)^2|\nabla^{\hhh,S}\tilde h|^2 +4\alpha^2|\tilde h|^2\left((4k-2)|\tilde h|^2+(2+2kn-2k-4n)\hnn^2\right)
         \end{equation}
for any $\delta\geq 0$ and any $k\in [0,2]$.
Combining \eqref{segnodiq}, \eqref{simonscondeltadue} and \eqref{katocondelta}, we conclude that
\begin{equation}\label{simonskatoeqnugualedue}
    2\adel^2\hat\Delta^{\hhh,S}|\tilde h|^2\geq \left(1+\frac{k}{2n-1}\right)|\nabla^S|\tilde h|^2|^2-4q\adel^4+4\alpha^2|\tilde h|^2\fisd
\end{equation}
for any $\delta\geq 0$ and any $k\in [0,2]$, where
\begin{equation}\label{doveedefinitafis}
      \fisd(p):=(6-4k)|\tilde h_p|^2-2k\hnn(p)^2
\end{equation}
for any $p\in S$. 

\section{The improved stability inequality}\label{stabilitisectionnnnnn}
    In this section, we establish the sub-Riemannian analogue of \cite[Theorem 1]{MR0423263} in the Heisenberg group by proving \Cref{newnewmainstabilityintrostatement}. Our approach relies on the Simons–Kato inequalities obtained in \Cref{secsimonskato}. For future purposes, we specialize our exposition to $\hh^2$, although the reader should not encounter difficulties in adapting the same strategy to higher-dimensional cases.
    \begin{proof}[Proof of \Cref{newnewmainstabilityintrostatement}]
    We recall that we are assuming 
    \begin{equation}\label{stimafondamentale}
            \fisd(p)\geq 0
        \end{equation}      
        for a given $k\in [0,2]$ and for any $p\in S$. 
    Let us recall (cf. \cite[Lemma 11.6]{MR2644313}) that
    \begin{equation}\label{chainrulegarofalo}
        \hat\Delta^{\hhh,S}\left( F\circ u\right)=\left(\Ddot{F}\circ u\right)|\nabla^{\hhh,S}u|^2+\left(\Dot{F}\circ u\right)\hat\Delta^{\hhh,S}u
    \end{equation}
    for any $F\in C^2(\rr)$ and any $u\in C^2(S)$.
        Fix $\beta,\delta>0$, $k\in [0,2]$ and a test function $\varphi\in C^1_c(S)$. Choosing $\xi=\adel^\beta\varphi$ in the stability inequality \eqref{stabilityinequality}, where $\adel$ is as in \eqref{adeldef}, we infer that
\begin{equation*}
    \begin{split}
        \int_S&q\left(\adel^\beta\varphi\right)^2\,d \sigma_\hhh\overset{\eqref{stabilityinequality}}{\leq}\int_S\left|\nabla^{ \hhh,S}\left(\adel^\beta\varphi\right)\right|^2\,\,d \sigma_\hhh\\
        &=\int_S\left|\frac{\beta}{2}\varphi\adel^{\beta-2}\nabla^{\hhh,S}|\tilde h|^2+\adel^\beta\nabla^{\hhh,S}\varphi\right|^2\,d\sigma_\hhh\\
        &=\int_S\frac{\beta^2}{4}\varphi^2\adel^{2\beta-4}|\nabla^{\hhh,S}|\tilde h|^2|^2+\beta\varphi\adel^{2\beta-2}\langle\nabla^{\hhh,S}|\tilde h|^2,\nabla^{\hhh,S}\varphi\rangle+\adel^{2\beta}|\nabla^{\hhh,S}\varphi|^2\,d\sigma_\hhh\\ 
         &=\int_S\frac{\beta^2}{4}\varphi^2\adel^{2\beta-4}|\nabla^{\hhh,S}|\tilde h|^2|^2+\frac{1}{2}\langle\nabla^{\hhh,S}\adel^{2\beta},\nabla^{\hhh,S}\varphi^2\rangle+\adel^{2\beta}|\nabla^{\hhh,S}\varphi|^2\,d\sigma_\hhh\\
         &\overset{\eqref{ibpformula}}{=}\int_S\frac{\beta^2}{4}\varphi^2\adel^{2\beta-4}|\nabla^{\hhh,S}|\tilde h|^2|^2-\frac{1}{2}\varphi^2\hat\Delta^{\hhh,S}\adel^{2\beta}+\adel^{2\beta}|\nabla^{\hhh,S}\varphi|^2\,d\sigma_\hhh\\
         &\overset{\eqref{chainrulegarofalo}}{=}\int_S\frac{\beta}{2}\left(1-\frac{\beta}{2}\right)\varphi^2\adel^{2\beta-4}|\nabla^{\hhh,S}|\tilde h|^2|^2-\frac{\beta}{4}\varphi^2\adel^{2\beta-4}\left(2\adel^2\hat\Delta^{\hhh,S}|\tilde h|^{2}\right)+\adel^{2\beta}|\nabla^{\hhh,S}\varphi|^2\,d\sigma_\hhh\\
         &\overset{\eqref{simonskatoeqnugualedue}}{\leq}\int_S\frac{\beta}{2}\left(1-\frac{\beta}{2}\right)\varphi^2\adel^{2\beta-4}|\nabla^{\hhh,S}|\tilde h|^2|^2+\adel^{2\beta}|\nabla^{\hhh,S}\varphi|^2-\beta\varphi^2\alpha^2|\tilde h|^2\adel^{2\beta-4}\fisd\,d\sigma_\hhh\\
         &\quad +\int_S-\frac{\beta}{4}\left(1+\frac{k}{2n-1}\right)\varphi^2\adel^{2\beta-4}|\nabla^{\hhh,S}|\tilde h|^2|^2+\beta q\left(\adel^{\beta}\varphi\right)^2\,d\sigma_\hhh\\
         &\overset{\eqref{stimafondamentale}}{\leq}\int_S\frac{\beta}{4}\left(1-\beta-\frac{k}{2n-1}\right)\varphi^2\adel^{2\beta-4}|\nabla^{\hhh,S}|\tilde h|^2|^2+\beta q\left(\adel^{\beta}\varphi\right)^2+\adel^{2\beta}|\nabla^{\hhh,S}\varphi|^2\,d\sigma_\hhh.\\
    \end{split}
\end{equation*}
  Therefore, factoring out, we deduce that
  \begin{equation}\label{firstssydelta}
      \frac{\beta}{4}\left(\frac{k}{2n-1}+\beta-1\right)\int_S\varphi^2\adel^{2\beta-4}|\nabla^{\hhh,S}|\tilde h|^2|^2\,d\sigma_\hhh\leq\int_S(\beta-1)q\left(\adel^{\beta}\varphi\right)^2+\adel^{2\beta}|\nabla^{\hhh,S}\varphi|^2\,d\sigma_\hhh.
  \end{equation}
  Assume first that $\beta\in\left[  1,1+\sqrt{\frac{k}{2n-1}} \right)$. In particular, $\beta-1\geq 0$. Therefore, exploiting again \eqref{stabilityinequality} and combining Cauchy-Schwarz and Young inequalities,
\begin{equation*}
    \begin{split}
        \int_S&q\left(\adel^\beta\varphi\right)^2\,d\sigma_\hhh\overset{\eqref{stabilityinequality}}{\leq}\int_S\left|\nabla^{ \hhh,S}\left(\adel^\beta\varphi\right)\right|^2\,\,d \sigma_\hhh\\
        &=\int_S\frac{\beta^2}{4}\varphi^2\adel^{2\beta-4}|\nabla^{\hhh,S}|\tilde h|^2|^2+\beta\varphi\adel^{2\beta-2}\langle\nabla^{\hhh,S}|\tilde h|^2,\nabla^{\hhh,S}\varphi\rangle+\adel^{2\beta}|\nabla^{\hhh,S}\varphi|^2\,d\sigma_\hhh\\
        &\leq\int_S\frac{\beta^2}{4}\varphi^2\adel^{2\beta-4}|\nabla^{\hhh,S}|\tilde h|^2|^2+\beta\left(\varphi\adel^{\beta-2}\left|\nabla^{\hhh,S}|\tilde h|^2\right|\right)\left(\adel^\beta\left|\nabla^{\hhh,S}\varphi\right|\right)+\adel^{2\beta}|\nabla^{\hhh,S}\varphi|^2\,d\sigma_\hhh\\
        &\leq\int_S\frac{\beta^2}{4}\varphi^2\adel^{2\beta-4}|\nabla^{\hhh,S}|\tilde h|^2|^2+\adel^{2\beta}|\nabla^{\hhh,S}\varphi|^2\,d\sigma_\hhh\\
        &\quad+\int_S\frac{\eps\beta}{4}\varphi^2\adel^{2\beta-4}\left|\nabla^{\hhh,S}|\tilde h|^2\right|^2+\frac{\beta}{\eps}\adel^{2\beta}\left|\nabla^{\hhh,S}\varphi\right|^2\,d\sigma_\hhh\\
        &=\int_S\frac{\beta}{4}(\beta+\eps)\varphi^2\adel^{2\beta-4}\left|\nabla^{\hhh,S}|\tilde h|^2\right|^2+\left(1+\frac{\beta}{\varepsilon}\right)\adel^{2\beta}\left|\nabla^{\hhh,S}\varphi\right|^2\,d\sigma_\hhh
    \end{split}
\end{equation*}
for any given $\eps>0$, so that
\begin{equation}\label{secondssydelta}
\begin{split}
      (\beta-1)\int_Sq\left(\adel^\beta\varphi\right)^2\,d\sigma_\hhh\leq \frac{\beta}{4}(\beta-1)&(\beta+\eps)\int_S\varphi^2\adel^{2\beta-4}\left|\nabla^{\hhh,S}|\tilde h|^2\right|^2\,d\sigma_\hhh\\
    &+(\beta-1)\left(1+\frac{\beta}{\varepsilon}\right)\int_S\adel^{2\beta}\left|\nabla^{\hhh,S}\varphi\right|^2\,d\sigma_\hhh.
\end{split}
\end{equation}
Combining \eqref{firstssydelta} and \eqref{secondssydelta},
\begin{equation*}
    \begin{split}
         \frac{\beta}{4}&\left(\frac{k}{2n-1}+\beta-1\right)\int_S\varphi^2\adel^{2\beta-4}|\nabla^{\hhh,S}|\tilde h|^2|^2\,d\sigma_\hhh\leq \frac{\beta}{4}(\beta-1)(\beta+\eps)\int_S\varphi^2\adel^{2\beta-4}\left|\nabla^{\hhh,S}|\tilde h|^2\right|^2\,d\sigma_\hhh\\
         &\quad+(\beta-1)\left(1+\frac{\beta}{\varepsilon}\right)\int_S\adel^{2\beta}\left|\nabla^{\hhh,S}\varphi\right|^2\,d\sigma_\hhh+\int_S\adel^{2\beta}|\nabla^{\hhh,S}\varphi|^2\,d\sigma_\hhh,
    \end{split}
\end{equation*}
   so that, factoring out and dividing by $\frac{\beta}{4}$,
\begin{equation}\label{thirdssydelta}
         P(\eps,\beta,k)\int_S\varphi^2\adel^{2\beta-4}|\nabla^{\hhh,S}|\tilde h|^2|^2\,d\sigma_\hhh \leq Q(\eps,\beta)\int_S\adel^{2\beta}\left|\nabla^{\hhh,S}\varphi\right|^2\,d\sigma_\hhh,
\end{equation}
        where
        \begin{equation*}
            P(\eps,\beta,k)=-\beta^2+(2-\eps)\beta+\frac{k}{2n-1}-1+\eps\qquad\text{and}\qquad Q(\eps,\beta)=4+\frac{4\beta-4}{\eps}.
        \end{equation*}
        Notice that 
        \begin{equation*}
            Q(\eps,\beta)\geq 4
        \end{equation*}
        for any $\eps>0$ and any $\beta\geq 1$. Moreover,
          \begin{equation*}
       P(0,\beta,k)
		\begin{cases}>0&\text{ if  }\beta\in \left[1,1+\sqrt{\frac{k}{2n-1}}\right)\\
			=0 &\text{ if }\beta=1+\sqrt{\frac{k}{2n-1}}\\
            <0 &\text{ if }  \beta>1+\sqrt{\frac{k}{2n-1}}.
		\end{cases}
   \end{equation*}
   Since the map $\eps\mapsto P(\eps,\beta,k)$ is continuous and $\beta<1+\sqrt{\frac{k}{2n-1}}$, there exists $\hat \eps=\hat\eps (k,\beta)$ such that 
   \begin{equation}\label{fourthstepssy}
       P(\hat\eps,\beta,k)>0.
   \end{equation}
   Choosing $\eps=\hat\eps$ in \eqref{thirdssydelta}, we exploit \eqref{fourthstepssy} to infer that
     \begin{equation}\label{thirdstepssyboh}
                \int_S\varphi^2\adel^{2\beta-4}|\nabla^{\hhh,S}|\tilde h|^2|^2\,d\sigma_\hhh \leq \frac{Q(\hat\eps,\beta)}{P(\hat\eps,\beta,k)}\int_S\adel^{2\beta}\left|\nabla^{\hhh,S}\varphi\right|^2\,d\sigma_\hhh.
        \end{equation}
      Finally, recalling \eqref{hpintermediatraqejvraffinata}, we combine \eqref{secondssydelta} and \eqref{thirdssydelta} to conclude that
\begin{equation*}
    \begin{split}
      \int_S\varphi^2|\tilde h|^{2\beta+2}\,d \sigma_\hhh& \overset{\eqref{segnodiq}}{\leq} \int_Sq\left(\adel^\beta\varphi\right)^2\,d\sigma_\hhh\\
      &\overset{\eqref{secondssydelta}}{\leq}\frac{\beta}{4}(\beta+\hat\eps)\int_S\varphi^2\adel^{2\beta-4}\left|\nabla^{\hhh,S}|\tilde h|^2\right|^2\,d\sigma_\hhh+\left(1+\frac{\beta}{\hat\varepsilon}\right)\int_S\adel^{2\beta}\left|\nabla^{\hhh,S}\varphi\right|^2\,d\sigma_\hhh\\
      &\overset{\eqref{thirdstepssyboh}}{\leq}\left(\frac{\beta(\beta+\hat\eps)Q(\hat\eps,\beta)}{4P(\hat\eps,\beta,k)}+1+\frac{\beta}{\hat\eps}\right)\int_S\adel^{2\beta}\left|\nabla^{\hhh,S}\varphi\right|^2\,d\sigma_\hhh.
    \end{split}
\end{equation*}
Since the constant
\begin{equation*}
    C(\beta,k)=\left(\frac{\beta(\beta+\hat\eps)Q(\hat\eps,\beta)}{4P(\hat\eps,\beta,k)}+1+\frac{\beta}{\hat\eps}\right)
\end{equation*}
is independent on $\delta>0$, the dominated convergence theorem allows to let $\delta\to 0^+$ in the previous inequality to deduce that
\begin{equation}\label{seventhstepssy}
    \int_S\varphi^2|\tilde h|^{2\beta+2}\,d\sigma_\hhh\leq C(\beta,k) \int_S|\tilde h|^{2\beta}|\nabla^{\hhh,S}\varphi|^2\,d \sigma_\hhh.
\end{equation}
In order to conclude, we exploit \eqref{seventhstepssy} replacing $\varphi$ with $\varphi^{\beta+1}$. In this way H\"older's inequality implies 
\begin{equation*}
    \begin{split}
        \int_S|\tilde h|^{2\beta+2}\varphi^{2\beta+2}\,d \sigma_\hhh &= \int_S|\tilde h|^{2\beta+2}\left(\varphi^{\beta+1}\right)^2\,d \sigma_\hhh \\
        &\overset{\eqref{seventhstepssy}}{\leq}C(\beta,k) \int_S|\tilde h|^{2\beta}|\nabla^{\hhh,S}(\varphi^{\beta+1})|^2\,d \sigma_\hhh\\
        &=(\beta+1)^2C(\beta,k) \int_S\left(|\tilde h|\varphi\right)^{2\beta}|\nabla^{\hhh,S}\varphi|^2\,d\sigma_\hhh\\
        &\leq(\beta+1)^2C(\beta,k)\left(\int_S|\tilde h|^{2\beta+2}\varphi^{2\beta+2}\,d \sigma_\hhh \right)^{\frac{\beta}{\beta+1}}\left(\int_S|\nabla^{\hhh,S}\varphi|^{2\beta+2}\,d \sigma_\hhh \right)^{\frac{1}{\beta+1}},
    \end{split}
\end{equation*}
whence
\begin{equation*}
     \int_S|\tilde h|^{2\beta+2}\varphi^{2\beta+2}\,d \sigma_\hhh \leq (\beta+1)^{2\beta+2}C(\beta,k)^{\beta+1}\int_S|\nabla^{\hhh,S}\varphi|^{2\beta+2}\,d \sigma_\hhh .
\end{equation*}
The thesis follows when $\beta\geq 1$. Finally, assume that $\beta\in\left[   \frac{2n-1-k}{2n-1},1  \right)$. 
        In this case,      
          \eqref{firstssydelta} implies that
        \begin{equation*}
           \int_S|\tilde h|^{2\beta+2}\varphi^2\,d \sigma_\hhh \leq\int_Sq\left(\adel^{\beta}\varphi\right)^2\,d\sigma_\hhh\leq \frac{1}{1-\beta}\int_S\adel^{2\beta}|\nabla^S\varphi|^2\,d S,
        \end{equation*}
        and the thesis follows as in the previous case.
    \end{proof}
    \section{The Bernstein problem}\label{abttsect}
   As an important applications of our curvature estimates, we solve the Bernstein problem in the class of hypersurfaces $S\subseteq\hh^2$ satisfying \eqref{taiwansplitting}, \eqref{graddialfasololungojeinuconk} and \eqref{hpintermediatraqejvraffinata}, providing the sub-Riemannian analogue of \cite[Theorem 2]{MR0423263}. To this aim, we assume the validity of suitable sub-Riemannian volume growth conditions,  inspired
    by the behavior of  perimeter minimizers (cf. \cite[Theorem 2.2]{psv}).
    \begin{proposition}\label{stiedivolumesezionebbbbbbbb}
        Let $E\subseteq \hh^n$ be a global perimeter minimizer with smooth, non-characteristic boundary. Then there exists a constant $c>0$ such that
        \begin{equation*}\label{ude}
            P_\hh(E,B_r(p))\leq cr^{2n+1}
        \end{equation*}
        for any $r>0$ and any $p\in\partial E$.
    \end{proposition}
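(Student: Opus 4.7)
The plan is to follow the classical Euclidean blueprint and compare $E$ with its restriction outside a Carnot–Carath\'eodory ball, using perimeter minimality to trade $P_\hh(E, B_r(p))$ against the perimeter of the ball itself, which in turn scales correctly thanks to the intrinsic homogeneity of $\hh^n$.

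First, fix $p\in\partial E$ and $r>0$, and set $F=E\setminus B_r(p)$. Since $B_r(p)$ is bounded and has finite $\hh$-perimeter, $F$ is an $\hh$-Caccioppoli set, and $E\Delta F\subseteq\overline{B_r(p)}\Subset B_{r+1}(p)\Subset\hh^n$. By the minimality assumption applied with $\Om=B_{r+1}(p)$, one has
\begin{equation*}
P_\hh(E,B_{r+1}(p))\leq P_\hh(F,B_{r+1}(p)).
\end{equation*}
Next, I would invoke the standard decomposition of the perimeter of a set-theoretic intersection (for $F=E\cap (\hh^n\setminus B_r(p))$), namely
\begin{equation*}
P_\hh(F,B_{r+1}(p))\leq P_\hh(E,B_{r+1}(p)\setminus\overline{B_r(p)})+P_\hh(B_r(p),\hh^n),
\end{equation*}
which follows from a direct analysis of the essential boundary of $F$ in terms of $\partial^*E$ and $\partial^*B_r(p)$. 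Subtracting the first term on the right from both sides of the minimality inequality yields
\begin{equation*}
P_\hh(E,B_r(p))\leq P_\hh(E,\overline{B_r(p)})\leq P_\hh(B_r(p),\hh^n).
\end{equation*}

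The conclusion then follows from the intrinsic scaling of $\hh$-perimeter under the Heisenberg dilations $\delta_r$. Indeed $B_r(p)=p\cdot \delta_r(B_1(0))$, and since the sub-Riemannian perimeter is homogeneous of degree $Q-1=2n+1$ with respect to $\delta_r$ and invariant under left translations, one gets
\begin{equation*}
P_\hh(B_r(p),\hh^n)=r^{2n+1}P_\hh(B_1(0),\hh^n).
\end{equation*}
Setting $c=P_\hh(B_1(0),\hh^n)$ gives the thesis, provided $c<\infty$.

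The main obstacle I expect is precisely the last point: verifying that $P_\hh(B_1(0),\hh^n)<\infty$, i.e. that the unit CC-ball has finite horizontal perimeter. Unlike Kor\'anyi gauge balls, CC balls in $\hh^n$ have boundaries whose regularity is a delicate issue, and characteristic points may appear. However, this finiteness is by now classical in Carnot groups (it can be obtained, e.g., from the fact that $B_1(0)$ is bounded with Lipschitz boundary in the metric sense, or by comparison with smoother gauge balls sandwiching $B_1(0)$ and using the monotonicity of $P_\hh(\cdot,\hh^n)$ under set inclusion modulo a suitable argument). Alternatively, and perhaps more cleanly for our purposes, the whole scheme can be repeated with $B_r(p)$ replaced by a gauge ball comparable to the CC-ball, absorbing the comparison constants into $c$, thereby bypassing any regularity subtlety at the cost of a purely notational adjustment to the statement.
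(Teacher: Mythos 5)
Your argument is essentially the standard comparison proof, and it is sound in outline; note that the paper itself does not prove this proposition but simply quotes \cite[Theorem 2.2]{psv}, so there is no internal proof to compare against. Your three ingredients are the right ones: minimality tested against $F=E\setminus B_r(p)$ in a slightly larger ball, a cut-and-paste estimate for the $\hh$-perimeter of a set difference, and the left-invariance together with the homogeneity of degree $Q-1=2n+1$ of $P_\hh$ under the dilations $\delta_r$, which is exactly where the exponent $2n+1$ comes from. Two points deserve attention. First, the cut-and-paste inequality $P_\hh(E\setminus B_r,A)\leq P_\hh(E,A\setminus \overline{B_r(p)})+P_\hh(B_r(p),\hh^n)$ is not automatic from the definition; it is the sub-Riemannian analogue of the Euclidean decomposition of the essential boundary and should be justified via the locality of the perimeter measure and the product rule for $X$-BV functions (or by choosing good radii through a coarea argument). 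Also, the version with $\overline{B_r(p)}$ requires knowing that no point of $\partial B_r(p)$ has density zero for the ball; this is avoidable, since the open-ball version $P_\hh(E\setminus B_r,A)\leq P_\hh(E,A\setminus B_r(p))+P_\hh(B_r(p),\hh^n)$ suffices for your subtraction and yields $P_\hh(E,B_r(p))\leq P_\hh(B_r(p),\hh^n)$, which is all you need.

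Second, the step you yourself flag as the crux, namely $P_\hh(B_1(0),\hh^n)<\infty$, is fine, but one of the two justifications you propose is not: the $\hh$-perimeter is \emph{not} monotone under set inclusion, so sandwiching the CC ball between gauge balls does not by itself control its perimeter. The clean fixes are either (i) to use that the CC unit sphere in $\hh^n$ is a compact hypersurface which is smooth away from the two poles and Euclidean Lipschitz, so that on a bounded set $P_\hh$ is dominated by the Euclidean surface measure, or (ii) to run the entire comparison with a Kor\'anyi gauge ball $B^g_{c_2r}(p)\supseteq B_r(p)$, whose boundary is real-analytic; since $P_\hh(E,\cdot)$ is a Borel measure, it is monotone in the \emph{region} argument, so $P_\hh(E,B_r(p))\leq P_\hh(E,B^g_{c_2r}(p))\leq P_\hh(B^g_{c_2r}(p),\hh^n)\leq c\,r^{2n+1}$, and no adjustment of the statement is needed, contrary to what you suggest. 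Finally, observe that your proof never uses the smoothness or the non-characteristic assumption on $\partial E$, which is consistent with the fact that the growth estimate holds for general perimeter minimizers.
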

       \begin{remark}
       With regard to the above volume growth condition, notice that $2n+1=Q-1$, where $Q:=2n+2$ is both the \emph{homogeneous} and the \emph{metric} dimension of $\hn$ (cf. \cite{MR3587666}).
    \end{remark}
   In order to apply \Cref{newnewmainstabilityintrostatement}, we need to ensure the validity of \eqref{stimafondamentale} for suitable values of $k$.
    To this aim, we see that 
    \begin{equation}\label{nuovosemplice}
        \fisd(p)\overset{\eqref{doveedefinitafis}}{=}(6-4k)|\tilde h_p|^2-2k\hnn(p)^2\overset{\eqref{newnewboundperelle}}{\geq}\left(6-\frac{16}{3}k\right)|\tilde h_p|^2\geq 0
    \end{equation}
 provided that $k\leq\frac{9}{8}$.
Before stating our main result, we point out that, in view of \cite{MR4103357}, when $n\geq 2$ the notion of completeness for an embedded hypersurface $S\subseteq\hh^n$ can be equivalently given by the restriction of the ambient metric on $S$ or by the intrinsic metric of $S$, provided that the latter are induced by the Euclidean, Riemannian or sub-Riemannian structure of $\hh^n$. Therefore, in the following we will talk without ambiguity of complete hypersurfaces.
    
    \begin{theorem}\label{mainmainmainmain}
        Let $S\subseteq\hh^2$ be a smooth, complete, connected, embedded, two sided non-characteristic hypersurface. Assume that $S$ is stable. Assume that $S$ verifies \eqref{taiwansplitting}, \eqref{graddialfasololungojeinuconk} and \eqref{hpintermediatraqejvraffinata}. Assume in addition that there exists $p\in S$ and a constant $c>0$ such that
        \begin{equation}\label{volumegrowth}  \lim_{r\to+\infty}\frac{\sigma_\hhh(S\cap B_r(p))}{r^{2n+1}}\leq c.
        \end{equation}
        Then $S$ is a vertical hyperplane.
    \end{theorem}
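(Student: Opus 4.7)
The plan is to deduce the theorem as a direct consequence of the integral curvature estimate \Cref{newnewmainstabilityintrostatement} and the rigidity result for hypersurfaces with vanishing symmetric second fundamental form proved in \cite{ruled}. First I would fix the parameters. In $\hh^2$ we have $n=2$, so the admissible range for $\beta$ in \Cref{newnewmainstabilityintrostatement} is $\left[1-\tfrac{k}{3},\,1+\sqrt{\tfrac{k}{3}}\right)$, while the exponent appearing on the right-hand side of the volume growth bound will be $2n+1-(2\beta+2)=3-2\beta$. To make the latter strictly negative one needs $\beta>3/2$, which is compatible with the upper bound exactly when $k>3/4$. On the other hand, by \eqref{nuovosemplice}, the sign condition $\fisd\geq 0$ used in \Cref{newnewmainstabilityintrostatement} holds as long as $k\leq 9/8$. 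The intersection $k\in\left(\tfrac{3}{4},\tfrac{9}{8}\right]$ is non-empty, so I would pick, say, $k=1$ and choose $\beta\in\left(3/2,\,1+\tfrac{1}{\sqrt 3}\right)$.

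With $k$ and $\beta$ so chosen, the hypotheses \eqref{taiwansplitting}, \eqref{graddialfasololungojeinuconk}, \eqref{hpintermediatraqejvraffinata} and $\fisd\geq 0$ are all in force, so \Cref{newnewmainstabilityintrostatement} yields a constant $C=C(\beta,k)>0$ such that
\begin{equation*}
   \int_S|\tilde h|^{2\beta+2}\varphi^{2\beta+2}\,d \sigma_\hhh\leq C\int_S|\nabla^{\hhh,S}\varphi|^{2\beta+2}\,d \sigma_\hhh
\end{equation*}
for every $\varphi\in C^1_c(S)$. Next I would construct a family of cut-off functions $\varphi_r$ based at the point $p\in S$ provided by \eqref{volumegrowth}. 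Using the Carnot-Carathéodory distance $d(p,\cdot)$ from $p$, which is $1$-Lipschitz and therefore satisfies $|\nabla^\hh d(p,\cdot)|\leq 1$ almost everywhere (hence $|\nabla^{\hhh,S} d(p,\cdot)|\leq 1$ on $S$), one obtains after a standard smoothing argument a sequence $\varphi_r\in C^1_c(S)$ with $\varphi_r\equiv 1$ on $S\cap B_r(p)$, $\supp\varphi_r\subseteq S\cap B_{2r}(p)$ and $|\nabla^{\hhh,S}\varphi_r|\leq C_0/r$. The completeness of $S$ (with respect to any of the equivalent metrics on $S$, by \cite{MR4103357}) ensures that $\varphi_r$ is indeed compactly supported.

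Plugging $\varphi_r$ into the inequality above and applying the volume growth assumption \eqref{volumegrowth}, I get
\begin{equation*}
    \int_{S\cap B_r(p)}|\tilde h|^{2\beta+2}\,d \sigma_\hhh
    \leq \frac{C\,C_0^{2\beta+2}}{r^{2\beta+2}}\,\sigma_\hh(S\cap B_{2r}(p))
    \leq C_1\, r^{\,3-2\beta}
\end{equation*}
for all $r$ large enough. Since $\beta>3/2$, letting $r\to\infty$ forces $\int_S|\tilde h|^{2\beta+2}\,d\sigma_\hh=0$, whence $\tilde h\equiv 0$ on $S$. At this point the hypersurface $S$ is smooth, complete, embedded, non-characteristic, and has vanishing symmetric horizontal second fundamental form, so the rigidity statement from \cite{ruled} recalled in the introduction implies that $S$ is a vertical hyperplane.

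The routine ingredients (choice of parameters, cut-off construction, Hölder and volume growth) are essentially forced by the analysis already carried out in \Cref{secsimonskato} and \Cref{stabilitisectionnnnnn}, so the only delicate point is the cut-off construction: one must confirm that the CC-distance from $p$ can be smoothed into $C^1_c$ functions on $S$ with controlled horizontal tangential gradient, exploiting the fact that $|\nabla^\hh d(p,\cdot)|\leq 1$ a.e. and the non-characteristic, complete nature of $S$. Once this is in place, the rest of the argument is a direct combination of \Cref{newnewmainstabilityintrostatement}, the sign check \eqref{nuovosemplice}, and the rigidity theorem in \cite{ruled}.
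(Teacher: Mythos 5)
Your proposal is correct and follows essentially the same route as the paper: choose $k\in\left(\tfrac34,\tfrac98\right]$ and $\beta\in\left(\tfrac32,1+\sqrt{k/3}\right)$ so that \eqref{nuovosemplice} gives $\fisd\geq 0$, apply \Cref{newnewmainstabilityintrostatement} with cut-offs adapted to Carnot--Carathéodory balls, use \eqref{volumegrowth} to force $\tilde h\equiv 0$, and conclude via the rigidity theorem of \cite{ruled}. The only difference is cosmetic: where you smooth the CC distance by hand, the paper invokes an existing cut-off lemma (with $|\nabla^{\hhh}\varphi_j|\leq \tilde C/R_j$) in the ambient space and restricts to $S$, using completeness to guarantee compact support, which settles the point you flagged as delicate.
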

    \begin{proof}
       Fix $p\in S$ as in the statement. Let $(R_j)_j$ be a sequence of positive numbers such that $         \lim_{j\to\infty}R_j=+\infty$. In this way, up to a subsequence, we deduce from 
       \eqref{volumegrowth} that
       \begin{equation}\label{volumegrowthnonlim}
           \sigma_\hhh(S\cap B_{2R_j}(p))\leq \hat c R_j^{2n+1}
       \end{equation}
     for any $j\in\mathbb N$ and a suitable constant $\hat c>0$.  In view of \cite[Lemma 3.6]{MR1221840}, it is possible to find a positive constant $\tilde C>0$ and a sequence of non-negative  functions $(\varphi_j)_j\subseteq C_c^1(\hh^2)$ such that 
       \begin{equation}\label{cutoff}
           \varphi_j\equiv 1\text{ in }B_{R_j}(p),\qquad  \varphi_j\equiv 0\text{ in }\hh^2\setminus B_{2R_j}(p)\qquad\text{and}\qquad|\nabla^{\mathcal{H}}\varphi_j|\leq\frac{\tilde C}{R_j}.
       \end{equation}
       We wish to apply \Cref{newnewmainstabilityintrostatement} to the sequence $(\varphi_j)_j$. To this aim, since $S$ is complete, then $S\cap \supp \varphi_j$ is compact in $S$ for any $j\in\mathbb N$, whence $(\varphi_j)_j\subseteq C^1_c(S)$.
        Fix $k\in \left[0,\frac{9}{8}\right]$. Then, in view of \eqref{nuovosemplice}, \eqref{stimafondamentale} holds.
       Therefore, for any fixed $j\in\mathbb N$, we can apply \eqref{newnewmaineqstabintro}, so that
       \begin{equation*}
           \begin{split}
               \int_{S\cap B_{R_j}(p)}|\tilde h|^{2\beta+2}\,d \sigma_\hhh &\overset{\eqref{cutoff}}{\leq} \int _{S\cap B_{2R_j}(p)}|\tilde h|^{2\beta+2}\varphi^{2\beta+2}\,d \sigma_\hhh \\
               &\overset{\eqref{newnewmaineqstabintro}}{\leq}\int _{S\cap B_{2R_j}(p)}|\nabla^{\mathcal{H},S}\varphi|^{2\beta+2}\,d \sigma_\hhh \\
               &\leq\int _{S\cap B_{2R_j}(p)}|\nabla^{\mathcal{H}} \varphi|^{2\beta+2}\,d \sigma_\hhh \\
               &\overset{\eqref{cutoff}}{\leq} \left(\frac{\tilde C}{R_j}\right)^{2\beta+2} \sigma_\hhh(S\cap B_{2R_j}(p))\\
               &\overset{\eqref{volumegrowthnonlim}}{\leq}\tilde cR_j^{3-2\beta}
           \end{split}
      \end{equation*}   
       for a suitable positive constant $\tilde c$ independent of $j\in\mathbb N$. Observe that, if we could choose $\beta>\frac{3}{2}$ in the previous inequality, we could pass to the limit as $j\to\infty$ to infer that
       \begin{equation*}
           \int_S|\tilde h|^{2\beta+2}\,d \sigma_\hhh =0,
       \end{equation*}
       whence $\tilde h\equiv 0$. To this aim, it suffices to notice that $           \frac{3}{2}<1+\sqrt{\frac{k}{3}}\iff k>\frac{3}{4}. $
Therefore, any choice of $k\in\left(\frac{3}{4},\frac{9}{8}\right]$ allows to conclude that $\tilde h\equiv 0$. Finally, being $S$ non-characteristic, we can apply \cite[Theorem 1.1]{ruled} to conclude that $S$ is a vertical hyperplane.
    \end{proof}
    As a consequence of \Cref{mainmainmainmain}  and \Cref{stiedivolumesezionebbbbbbbb}, we get the following corollary.
    \begin{corollary}
        Let $E\subseteq\hh^2$ be a global perimeter minimizer. Let $\partial E$ be smooth, connected and non-characteristic. Assume that $\partial E$ verifies \eqref{taiwansplitting}, \eqref{graddialfasololungojeinuconk} and \eqref{hpintermediatraqejvraffinata}. Then $\partial E$ is a vertical hyperplane.
    \end{corollary}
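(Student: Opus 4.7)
The strategy is to verify the hypotheses of \Cref{mainmainmainmain} for $S := \partial E$ and then invoke it directly. Most of these hypotheses are either assumed outright (smoothness, connectedness, non-characteristic, and \eqref{taiwansplitting}, \eqref{graddialfasololungojeinuconk}, \eqref{hpintermediatraqejvraffinata}) or immediate from the structural role of $\partial E$ as the topological boundary of an open set (two-sidedness and embeddedness). Thus only three items require comment: stability, completeness, and the volume growth bound \eqref{volumegrowth}.

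For stability, the standing assumption that $E$ is a global $\hh$-perimeter minimizer implies at once that $\partial E$ is area stationary and that the second variation is non-negative for any smooth, compactly supported variation on the non-characteristic part. By the first and second variation formulas \eqref{firstvariation}--\eqref{secondvariation}, $\partial E$ is therefore minimal and the stability inequality \eqref{stabilityinequality} holds. For completeness, one uses that $\partial E$ is a closed subset of the complete metric space $(\hh^2,d)$; since \cite{MR4103357} guarantees that on non-characteristic hypersurfaces in $\hh^n$ with $n\geq 2$ the intrinsic sub-Riemannian metric coincides with the restriction of the ambient distance, $\partial E$ endowed with its intrinsic metric is complete, as required.

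The volume growth condition \eqref{volumegrowth} is the only item relying on a nontrivial external input. Because $\partial E$ is smooth and non-characteristic, the measure-theoretic perimeter coincides with the sub-Riemannian surface measure, i.e.\ $P_\hh(E,\cdot) = \sigma_\hhh \llcorner \partial E$. Applying \Cref{stiedivolumesezionebbbbbbbb} then yields
\begin{equation*}
    \sigma_\hhh(\partial E\cap B_r(p)) = P_\hh(E,B_r(p)) \leq c\,r^{2n+1}
\end{equation*}
for any $p\in \partial E$ and any $r>0$, which gives both the pointwise bound and the limit superior formulation needed in \eqref{volumegrowth}.

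With every hypothesis of \Cref{mainmainmainmain} checked for $S=\partial E$, the conclusion that $\partial E$ is a vertical hyperplane follows immediately. I do not foresee any real obstacle here: the corollary is a straightforward specialization of the main rigidity theorem to the class of global perimeter minimizers, with \Cref{stiedivolumesezionebbbbbbbb} providing exactly the growth control required. The only mildly delicate point is the identification of completeness in the intrinsic versus ambient sense, which is precisely why the hypothesis $n\geq 2$ and the result of \cite{MR4103357} are available.
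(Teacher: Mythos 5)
Your proposal is correct and matches the paper's route: the corollary is stated there as an immediate consequence of \Cref{mainmainmainmain} together with \Cref{stiedivolumesezionebbbbbbbb}, with stability of the minimizer, completeness of the closed boundary (via the metric equivalence of \cite{MR4103357}), and the identification $P_\hh(E,\cdot)=\sigma_\hhh\llcorner\partial E$ supplying exactly the hypotheses you verify. Your write-up simply makes these standard verifications explicit.
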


\appendix
\section{Weakening \eqref{hpintermediatraqejvraffinata}}\label{appendice}
In this final Appendix we show how \eqref{hpintermediatraqejvraffinata} can be suitably refined. To this aim, we introduce a parametric form of the latter, requiring that
         \begin{equation}\label{hpintermediatraqejvraffinataapp}
        4J(\vh)\alpha+(4+\omega)\alpha^2\geq 0.
    \end{equation}
    for a fixed constant $\omega\in[0,2]$. Observe that \eqref{hpintermediatraqejvraffinataapp} is clearly implied by \eqref{hpintermediatraqejvraffinata} for any $\omega\in [0,2]$.
When $n=2$ and \eqref{taiwansplitting}, \eqref{graddialfasololungojeinuconk} and \eqref{hpintermediatraqejvraffinataapp} hold, we obtain a parametric Simons inequality of the form
    \begin{equation*}\label{bestsimonsinh222222222app}
         \frac{1}{2}\hat\Delta^{\hhh,S}|\tilde h|^2\geq|\nabla^{\hhh,S}\tilde h|^2-q|\tilde h|^2+(4-2\omega)\alpha^2|\tilde h|^2-(6-3\omega)\alpha^2\hnn^2.
    \end{equation*}
   For any $\delta\geq 0$, let $\adel$ be as in \eqref{adeldef}. As $\omega\leq 2$, then $6-3\omega\geq 0$, so that we apply \eqref{newnewboundperelle} to infer 
\begin{equation*}
    (4-2\omega)|\tilde h|^2-(6-3\omega)\hnn^2\overset{\eqref{newnewboundperelle}}{\geq}(4-2\omega)|\tilde h|^2-\frac{2}{3}(6-3\omega)|\tilde h|^2=0.
\end{equation*}
Therefore, arguing \emph{verbatim} as in \Cref{secsimonskato}, we deduce that
\begin{equation*}\label{simonscondeltadueapp}
         2\adel^2\hat\Delta^{\hhh,S}|\tilde h|^2\geq 4\adel^2|\nabla^{\hhh,S}\tilde h|^2-4q\adel^2|\tilde h|^2+4\alpha^2|\tilde h|^2\left((4-2\omega)|\tilde h|^2-(6-3\omega)\hnn^2\right)
    \end{equation*}
for any $\delta\geq 0$ and any $\omega\in [0,2]$. 
Therefore, following again the lines of \Cref{secsimonskato}, we conclude that
\begin{equation}\label{simonskatoeqnugualedueapp}
    2\adel^2\hat\Delta^{\hhh,S}|\tilde h|^2\geq \left(1+\frac{k}{2n-1}\right)|\nabla^S|\tilde h|^2|^2-4q\adel^4+4\alpha^2|\tilde h|^2\fisdapp
\end{equation}
for any $\delta\geq 0$ and any $k,\omega\in [0,2]$, where
\begin{equation*}\label{doveedefinitafisapp}
      \fisdapp(p):=(6-2\omega-4k)|\tilde h_p|^2+\left(3\omega-2k\right)\hnn(p)^2
\end{equation*}
for any $p\in S$. Exploiting \eqref{simonskatoeqnugualedueapp}, the conclusions of \Cref{newnewmainstabilityintrostatement} continue to hold provided that 
\begin{equation}\label{stimafondamentaleapp}
    \fisdapp(p)\geq 0
\end{equation}
for any $p\in S$. Moreover, in order to apply \Cref{newnewmainstabilityintrostatement} to the proof of our main theorem, we recall that we need to ensure the validity of \eqref{stimafondamentaleapp} for a suitable $k\in\left(\frac{3}{4},2\right]$.
 Let us describe our approach as follows. Let $m_\hnn\in[0,\frac{2}{3}]$ be such that
    \begin{equation}\label{mhnnpersec10app}
        \hnn(p)^2\geq m_\hnn |\tilde h_p|^2
    \end{equation}
    for any $p\in S$. Notice that the upper bound for $m_\hnn$ follows from \eqref{newnewboundperelle}, while $m_\hnn=0$ can be chosen whether no further information is available. 
    \begin{proposition}\label{lemmaprimadimaininsec10app}
        Assume that
    \begin{equation}\label{stimamhhheomegaapp}
       m_\hnn\leq\frac{2}{9}\quad\implies\quad\omega<u(m_\hnn):=\frac{3m_\hnn-6}{6m_\hnn-4}.
    \end{equation}
     Then there exists $k\in(\frac{3}{4},2]$ such that \eqref{stimafondamentaleapp} is satisfied.
    \end{proposition}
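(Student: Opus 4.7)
The plan is to reduce the pointwise inequality $\fisdapp(p)\geq 0$ to a one-variable linear problem and then to optimize over $k$. At points where $|\tilde h_p|=0$, the bound $\hnn^2\leq |\tilde h|^2$ (valid under \eqref{taiwansplitting}, since then $J(\vh)$ is an eigenvector of $\tilde h$ with eigenvalue $\hnn$, hence $\hnn^2\leq \sum_i\lambda_i^2=|\tilde h|^2$) forces $\hnn=0$, and the inequality is automatic. At all other points, dividing by $|\tilde h_p|^2$ and setting $r:=\hnn^2/|\tilde h|^2\in[m_\hnn,1]$ recasts $\fisdapp(p)\geq 0$ as
\begin{equation*}
\Phi_k(r):=(6-2\omega-4k)+(3\omega-2k)\,r\geq 0\qquad\text{for all }r\in[m_\hnn,1].
\end{equation*}

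Since $\Phi_k$ is affine in $r$, it is non-negative on $[m_\hnn,1]$ if and only if $\Phi_k(1)\geq 0$ and $\Phi_k(m_\hnn)\geq 0$. The first condition reads $k\leq 1+\omega/6$, while the second reads $k\leq K(\omega,m_\hnn)$, where
\begin{equation*}
K(\omega,m_\hnn):=\frac{6-2\omega+3\omega m_\hnn}{4+2m_\hnn}.
\end{equation*}
Thus the proposition reduces to showing that one can select
\begin{equation*}
k\in\Bigl(\tfrac34,\,\min\bigl\{2,\,1+\omega/6,\,K(\omega,m_\hnn)\bigr\}\Bigr],
\end{equation*}
which, since $\omega\in[0,2]$ immediately yields $1+\omega/6\in[1,4/3]\subset(3/4,2]$, is equivalent to the strict inequality $K(\omega,m_\hnn)>3/4$.

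To identify the critical threshold for $K$, I would compute $K(u(m_\hnn),m_\hnn)$ directly, using the factorization $4-4m_\hnn-3m_\hnn^2=(2-3m_\hnn)(2+m_\hnn)$ together with $4-6m_\hnn=2(2-3m_\hnn)$; these two identities combine to give $K(u(m_\hnn),m_\hnn)=3/4$ exactly. Next, I would note that $\partial_\omega K=(3m_\hnn-2)/(4+2m_\hnn)<0$ whenever $m_\hnn<2/3$, so $K(\cdot,m_\hnn)$ is strictly decreasing in $\omega$. Consequently, $\omega<u(m_\hnn)$ forces $K(\omega,m_\hnn)>3/4$, which settles the case $m_\hnn\leq 2/9$ treated in the hypothesis. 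When $m_\hnn>2/9$, the worst case $\omega=2$ gives $K(2,m_\hnn)=(1+3m_\hnn)/(2+m_\hnn)$, and the elementary inequality $(1+3m_\hnn)/(2+m_\hnn)>3/4\iff m_\hnn>2/9$ shows $K(\omega,m_\hnn)\geq K(2,m_\hnn)>3/4$ for every admissible $\omega$, so no further constraint on $\omega$ is needed. In both regimes, there exists $k\in(3/4,2]$ realizing \eqref{stimafondamentaleapp}.

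The only real obstacle is the algebraic bookkeeping: one must keep careful track of the signs of $3\omega-2k$ and $3m_\hnn-2$ in order to correctly identify where the affine function $\Phi_k$ attains its minimum on $[m_\hnn,1]$, and to justify the monotonicity of $K$ in $\omega$. Once the identity $K(u(m_\hnn),m_\hnn)=3/4$ is verified, the rest of the argument is a routine interval chase, and the threshold value $2/9$ appears naturally as the unique root of $u(m_\hnn)=2$, marking the transition between the two regimes described above.
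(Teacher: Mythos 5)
Your argument is correct — I checked the key identities: $K(\omega,m_\hnn):=\frac{6-2\omega+3\omega m_\hnn}{4+2m_\hnn}$ does satisfy $K(u(m_\hnn),m_\hnn)=\tfrac34$ identically, $\partial_\omega K=\frac{3m_\hnn-2}{4+2m_\hnn}\leq 0$ for $m_\hnn\leq\tfrac23$, and $K(2,m_\hnn)=\frac{1+3m_\hnn}{2+m_\hnn}>\tfrac34$ exactly when $m_\hnn>\tfrac29$ — so both regimes of \eqref{stimamhhheomegaapp} yield $K>\tfrac34$ and hence an admissible $k$. Your route differs from the paper's in its organization: the paper splits on the sign of $3\omega-2k$ (cases $\omega\leq\tfrac12$ and $\omega>\tfrac12$), uses the sharper bound $\hnn^2\leq\tfrac23|\tilde h|^2$ coming from \eqref{newnewboundperelle} together with \eqref{taiwansplitting} when that coefficient is negative, and produces $k$ only implicitly, by a strict-inequality-plus-continuity argument as $k\downarrow\tfrac34$; you instead normalize by $|\tilde h_p|^2$, view $\fisdapp/|\tilde h|^2$ as an affine function of $r=\hnn^2/|\tilde h|^2$ on $[m_\hnn,1]$ and check the two endpoints, which removes the sign case analysis, gives an explicit admissible interval $\bigl(\tfrac34,\min\{2,1+\omega/6,K\}\bigr]$ for $k$, and explains conceptually why the threshold $u(m_\hnn)$ in \eqref{stimamhhheomegaapp} is exactly the level set $K=\tfrac34$. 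The price of using the cruder bound $r\leq 1$ (rather than $r\leq\tfrac23$) is the extra endpoint constraint $k\leq 1+\omega/6$, which is harmless for existence since $1+\omega/6\geq 1>\tfrac34$, though it shrinks the admissible range of $k$ compared with the paper's (e.g.\ for $\omega=0$ the paper allows any $k\in(\tfrac34,\tfrac98]$); if you wanted the optimal interval you would replace the endpoint $r=1$ by $r=\tfrac23$, justified by \eqref{newnewboundperelle} under \eqref{taiwansplitting}. One cosmetic remark: $\hnn^2\leq|\tilde h|^2$ needs no appeal to \eqref{taiwansplitting}, since $\hnn=\tilde h(J(\vh),J(\vh))$ is a diagonal entry of the symmetric form $\tilde h$ in any orthonormal frame containing $J(\vh)$.
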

    \begin{remark}
    Notice that the function $s\mapsto u(s)$ is continuous and increasing on $\left[0,\frac{2}{9}\right]$, and moreover $u(0)=\frac{3}{2}$ and $u\left(\frac{2}{9}\right)=2$. In particular, \eqref{stimamhhheomegaapp} holds for any $\omega<\frac{3}{2}$ without any further information on $m_\hnn$, while the best choice $\omega=2$ can be made as soon as $m_\hnn>\frac{2}{9}$.
    \end{remark}
    \begin{proof}[Proof of \Cref{lemmaprimadimaininsec10app}]
        Let $k\in(\frac{3}{4},2]$. Assume first that $0\leq\omega\leq\frac{1}{2}$. Then $3\omega-2k<0$, so that
\begin{equation*}
   \fisdapp(p)=(6-2\omega-4k)|\tilde h_p|^2+\left(3\omega-2k\right)\hnn(p)^2\overset{\eqref{newnewboundperelle}}{\geq}\left(6-\frac{16}{3}k\right)|\tilde h_p|^2
\end{equation*}
for any $p\in S$, whence \eqref{stimafondamentaleapp} holds for any $k\in\left(\frac{3}{4},\frac{9}{8}\right].$ On the other hand, assume that $\omega>\frac{1}{2}$. Since $\omega>\frac{1}{2}$, we can chose $k>\frac{3}{4}$ small enough to ensure that $3\omega-2k>0$. Assume first that $m_\hnn\in\left[0,\frac{2}{9}\right]$. In this way, by \eqref{stimamhhheomegaapp}, $\omega<u(m_\hnn)$, so that
\begin{equation*}
\begin{split}
      \fisdapp(p)&=(6-2\omega-4k)|\tilde h_p|^2+\left(3\omega-2k\right)\hnn(p)^2\\
      &\overset{\eqref{mhnnpersec10app}}{\geq}(6-\omega(2-3m_\hnn)-4k-2km_\hnn)|\tilde h_p|^2\\
      &\overset{\eqref{stimamhhheomegaapp}}{>}\left(4\left(\frac{3}{4}-k\right)+2m_\hnn\left(\frac{3}{4}-k\right)\right)|\tilde h_p|^2
\end{split}
\end{equation*}
for any $p\in S$. 
As the last term in the above inequality vanishes when $k=\frac{3}{4}$, and due to the last strict inequality, the thesis follows when $m_\hnn\in\left[0,\frac{2}{9}\right]$. Finally, assume that $m_\hnn>\frac{2}{9}$.Recalling that $w\leq 2$,
\begin{equation*}
    \begin{split}
         \fisdapp(p)&=(6-2\omega-4k)|\tilde h_p|^2+\left(3\omega-2k\right)\hnn(p)^2\\
         &\overset{\eqref{mhnnpersec10app}}{\geq}\left(6-2\omega-4k+m_\hnn(3\omega-2k)\right)|\tilde h_p|^2\\
         &> \left(6-\frac{4}{3}\omega-\frac{40}{9}k\right)|\tilde h_p|^2\\
         &\geq\frac{40}{9}\left(\frac{3}{4}-k\right)|\tilde h_p|^2
    \end{split}
\end{equation*}
for any $p\in S$, whence the thesis follows as in the previous case.
    \end{proof}  
Replacing \eqref{hpintermediatraqejvraffinata} with \eqref{hpintermediatraqejvraffinataapp} allows then to provide the following refined version of \Cref{mainmainmainmain}    \begin{theorem}\label{mainmainmainmainapp}
        Let $S\subseteq\hh^2$ be a smooth, complete, connected, embedded, two sided non-characteristic hypersurface. Assume that $S$ is stable. Assume that $S$ verifies \eqref{taiwansplitting}, \eqref{graddialfasololungojeinuconk} and \eqref{hpintermediatraqejvraffinataapp}, where $\omega$ is as in \eqref{stimamhhheomegaapp}. Assume in addition that there exists $p\in S$ and a constant $c>0$ such that
        \begin{equation*}  \lim_{r\to+\infty}\frac{\sigma_\hhh(S\cap B_r(p))}{r^{2n+1}}\leq c.
        \end{equation*}
        Then $S$ is a vertical hyperplane.
    \end{theorem}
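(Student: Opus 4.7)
The plan is to mimic the argument of \Cref{mainmainmainmain}, replacing the role of the pointwise condition \eqref{hpintermediatraqejvraffinata} with the parametric condition \eqref{hpintermediatraqejvraffinataapp} together with the auxiliary \Cref{lemmaprimadimaininsec10app} that ensures the non-negativity of the refined remainder $\fisdapp$. The crucial observation is that, although \eqref{hpintermediatraqejvraffinata} is weakened to \eqref{hpintermediatraqejvraffinataapp}, the bound $q\ge |\tilde h|^2$ of \eqref{segnodiq} still holds, since $q=|\tilde h|^2+4J(\vh)\alpha+6\alpha^2\ge |\tilde h|^2+(2-\omega)\alpha^2\ge|\tilde h|^2$ whenever $\omega\in[0,2]$. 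This means every ingredient needed in the proof of \Cref{newnewmainstabilityintrostatement}, namely \eqref{segnodiq} together with the Simons-Kato inequality, is still at our disposal through \eqref{simonskatoeqnugualedueapp}.

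First I would invoke \Cref{lemmaprimadimaininsec10app} under the standing assumption \eqref{stimamhhheomegaapp}, obtaining some $k\in\left(\tfrac{3}{4},2\right]$ for which $\fisdapp(p)\geq 0$ for every $p\in S$. With this $k$ fixed, the word-by-word repetition of the proof of \Cref{newnewmainstabilityintrostatement}, where \eqref{simonskatoeqnugualedue} is replaced by \eqref{simonskatoeqnugualedueapp} and $\fisd$ is replaced by $\fisdapp$, yields the integral curvature estimate
\begin{equation*}
    \int_S |\tilde h|^{2\beta+2}\varphi^{2\beta+2}\,d\sigma_\hhh \leq C(\beta,k)\int_S |\nabla^{\hhh,S}\varphi|^{2\beta+2}\,d\sigma_\hhh
\end{equation*}
for every $\varphi\in C^1_c(S)$ and every $\beta\in\left[1-\tfrac{k}{3},1+\sqrt{\tfrac{k}{3}}\right)$. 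Because $k>\tfrac{3}{4}$, the interval $\left[1,1+\sqrt{k/3}\right)$ contains some $\beta>\tfrac{3}{2}$, and such a $\beta$ will be the one used in the rest of the proof.

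Next, exactly as in \Cref{mainmainmainmain}, I would fix $p\in S$ satisfying the volume growth hypothesis, choose a diverging sequence $(R_j)_j$ with $\sigma_\hhh(S\cap B_{2R_j}(p))\le \hat c R_j^{2n+1}$, and construct cut-off functions $\varphi_j\in C^1_c(\hh^2)$ as in \eqref{cutoff}, which by completeness restrict to compactly supported functions on $S$. Plugging $\varphi_j$ into the curvature estimate and exploiting the bounds $|\nabla^{\hhh,S}\varphi_j|\leq|\nabla^\hhh\varphi_j|\le\tilde C/R_j$ together with the volume growth yields
\begin{equation*}
    \int_{S\cap B_{R_j}(p)}|\tilde h|^{2\beta+2}\,d\sigma_\hhh\le \tilde c\, R_j^{3-2\beta},
\end{equation*}
for a constant $\tilde c$ independent of $j$. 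Since $\beta>\tfrac{3}{2}$, sending $j\to\infty$ gives $\tilde h\equiv 0$ on $S$. Finally, as $S$ is non-characteristic, \cite[Theorem 1.1]{ruled} forces $S$ to be a vertical hyperplane.

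The main obstacle is not the Bernstein-type argument itself, which is a straightforward adaptation, but rather confirming that the weaker condition \eqref{hpintermediatraqejvraffinataapp} still produces a usable lower bound for $\fisdapp$ with an admissible exponent $k\in(\tfrac{3}{4},2]$; this is precisely the content of \Cref{lemmaprimadimaininsec10app}, and the delicate point is the careful case analysis therein distinguishing the regimes $\omega\le\tfrac{1}{2}$, $\omega>\tfrac{1}{2}$ with $m_\hnn\le \tfrac{2}{9}$, and $m_\hnn>\tfrac{2}{9}$. Everything else is a transcription of the proof of \Cref{mainmainmainmain}.
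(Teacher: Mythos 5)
Your proposal is correct and follows essentially the same route the paper intends: invoke \Cref{lemmaprimadimaininsec10app} to obtain an admissible $k\in\left(\tfrac{3}{4},2\right]$ with $\fisdapp\geq 0$, rerun the proof of \Cref{newnewmainstabilityintrostatement} with \eqref{simonskatoeqnugualedueapp} in place of \eqref{simonskatoeqnugualedue}, and then repeat the cut-off and volume-growth argument of \Cref{mainmainmainmain} with some $\beta>\tfrac{3}{2}$ before concluding via \cite[Theorem 1.1]{ruled}. Your explicit check that $q\geq|\tilde h|^2+(2-\omega)\alpha^2\geq|\tilde h|^2$ still holds under the weakened hypothesis \eqref{hpintermediatraqejvraffinataapp} is a point the paper leaves implicit, and it is exactly the right thing to verify.
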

    
\bibliographystyle{abbrv}
\bibliography{biblio}
\end{document}